\newcommand{\R}{\mathbb{R}}
\newcommand{\N}{\mathbb{N}}
\newcommand{\CO}{C^{1,\alpha}(\partial\Omega)}
\newcommand{\COo}{C^{1,\alpha}(\partial\Omega^o)}
\newcommand{\COi}{C^{1,\alpha}(\partial\Omega^i)}
\newcommand{\COh}{C^{1,\alpha}(\partial\Omega^h)}
\newcommand{\COharm}{C^{1,\alpha}_{\mathrm{harm}}(\overline{\Omega})}
\newcommand{\COiharm}{C^{1,\alpha}_{\mathrm{harm}}(\overline{\Omega^i})}
\newcommand{\COeiharm}{C^{1,\alpha}_{\mathrm{harm}}(\epsilon \overline{\Omega^i})}
\newcommand{\COoiharm}{C^{1,\alpha}_{\mathrm{harm}}(\overline{\Omega^o} \setminus \epsilon \Omega^i)}
\begin{document}

\newtheorem{teo}{Theorem}[section]
\newtheorem{defin}[teo]{Definition}
\newtheorem{rem}[teo]{Remark}
\newtheorem{prop}[teo]{Proposition}
\newtheorem{lemma}[teo]{Lemma}
\newtheorem{cor}[teo]{Corollary}
\newtheorem{fact}[teo]{Fact}

\title[A perturbed nonlinear non-autonomous transmission problem]{Existence of solutions for a singularly perturbed nonlinear non-autonomous transmission problem}

\author{Riccardo Molinarolo}

\maketitle

\noindent
{\bf Abstract:}  In this paper we analyse a boundary value problem for the Laplace equation with a nonlinear non-autonomous transmission conditions on the boundary of a small inclusion of size $\epsilon$. We show that the problem has solutions for $\epsilon$ small enough and we investigate the dependence of a specific family of solutions upon $\epsilon$. By adopting a functional analytic approach we prove that the map which takes $\epsilon$ to (suitable restrictions of) the corresponding solution  can be represented in terms of real analytic functions.
\vspace{9pt}

\noindent
{\bf Keywords:}  nonlinear non-autonomous transmission problem; singularly perturbed perforated domain; small inclusion; Laplace operator; real analytic continuation in Banach space; asymptotic behaviour \par
\vspace{9pt}

\noindent   
{{\bf 2010 Mathematics Subject Classification:}} 35J25; 31B10; 45A05; 35B25; 35C20 %

\section{Introduction}

This paper is devoted to the analysis of a singularly perturbed nonlinear transmission problem for the Laplace equation in the pair of sets composed by a perforated domain and a (small) inclusion. The study of the behaviour of the solutions of boundary value problems in domain with small holes or inclusions has attracted the attention of several pure and applied mathematicians and it is impossible to provide a complete list of contributions. 
From an application point of view, boundary value problems in domains with small holes or inclusions can be the mathematical model of the heat conduction in bodies with small cavities and impurities and thus they are extensively studied in the theory of dilute composite materials (cf.~Movchan, Movchan, and Poulton \cite{MoMoPo02}).
In particular, transmission conditions like the ones that we study in this paper can be analytically derived in the case of a thin reactive heat conducting interphase situated between two different materials (see the works of Mishuris, Miszuris and \"Ochsner \cite{MiMiOc07}, \cite{MiMiOc08}, and of Miszuris and \"Ochsner \cite{MiOc13} and the references therein). Moreover, we point out that nonlinear transmission conditions arise also in the framework of elasto-plastic material (see e.g.~Miszuris and \"Ochsner \cite{MiOc05} and Mishuris, Miszuris, \"Ochsner, and Piccolroaz \cite{MiMiOcPi14}) and in the framework of articular cartilage problems (cf.~Vitucci, Argatov, and Mishuris \cite{ViArMi17}).

In order to introduce our specific problem, we begin by presenting the geometric framework. We fix once for all a natural number 
\[
n\ge 3
\]
which will be  the dimension of the Euclidean  space $\mathbb{R}^n$ we are going to work in, and a parameter
\[
\alpha \in ]0,1[
\]
which we use to define the regularity of our sets and functions. We observe that the case of dimension $n=3$ has physical relevance. However, the analysis for $n=3$ and for $n\ge 3$ is much the same and for this reason we opt for  the more general setting. Instead, the case of dimension $n=2$ requires  specific techniques.

Next, we introduce two sets $\Omega^o$ and $\Omega^i$ such that 
\[
\begin{split}
&\mbox{$\Omega^o$, $\Omega^i$ are bounded open connected subsets of $\R^n$ of class $C^{1,\alpha}$,} 
\\
&\mbox{with exteriors  $\R^n\setminus \overline{\Omega^o}$ and $\R^n\setminus \overline{\Omega^i}$ connected and the origin  $0$}
\\
&\mbox{of $\mathbb{R}^n$ belongs both to $\Omega^o$ and to $\Omega^i$.}
\end{split}
\]
Here the superscript ``$o$" stands for ``outer domain" whereas the superscript ``$i$"  stands for ``inner domain". We set
\[
\epsilon_0 \equiv \mbox{sup}\{\theta \in ]0, 1[: \epsilon \overline{\Omega^i} \subseteq \Omega^o, \ \forall \epsilon \in ]- \theta, \theta[  \} 
\]
and we define the perforated domain $\Omega(\epsilon)$ by setting
\[
\Omega(\epsilon) \equiv \Omega^o \setminus \epsilon \overline{\Omega^i}
\]
for all $\epsilon\in]-\epsilon_0,\epsilon_0[$. Then we fix three functions
\[
F \colon ]-\epsilon_0,\epsilon_0[ \times \partial \Omega ^i \times \R \to \R\,,\qquad G \colon ]-\epsilon_0,\epsilon_0[ \times \partial \Omega ^i \times \R \to \R\,,\text{ and }\quad f^o \in \COo,
\]
and, for $\epsilon \in ]0,\epsilon_0[$, we consider the following nonlinear non-autonomous transmission problem in the perforated domain $\Omega(\epsilon)$ for a pair of functions $(u^o,u^i)\in C^{1,\alpha}(\overline{\Omega(\epsilon)})\times C^{1,\alpha}(\overline{\epsilon\Omega^i})$:
\begin{equation}\label{princeq}
\begin{cases}
\Delta u^o = 0 & \mbox{in } \Omega(\epsilon), \\
\Delta u^i = 0 & \mbox{in } \epsilon \Omega^i, \\
u^o(x)=f^o(x) & \forall x \in \partial \Omega^o, \\
u^o(x) = F\left(\epsilon,\frac{x}{\epsilon},u^i(x)\right) & \forall x \in \epsilon \partial \Omega^i, \\
\nu_{\epsilon \Omega^i} \cdot \nabla u^o (x) - \nu_{\epsilon \Omega^i} \cdot \nabla u^i (x) = G\left(\epsilon, \frac{x}{\epsilon},u^i(x)\right) & \forall x \in \epsilon \partial \Omega^i.
\end{cases}
\end{equation}
Here $\nu_{\epsilon\Omega^i}$ denotes the outer exterior normal to $\epsilon\Omega^i$.

Problem \eqref{princeq} may, for example, model the heat conduction in a (nonlinear) composite material. Indeed, $u^o$ and $u^i$ could represent the temperature distribution in $\Omega(\epsilon)$ and in the inclusion $\epsilon \Omega^i$, respectively. The third condition in \eqref{princeq} means that we are prescribing the temperature distribution on the exterior boundary $\partial \Omega^o$. The fourth condition says that on the interface $\epsilon \partial \Omega^i$ the temperature distribution $u^o$ depends nonlinearly on the size of the inclusion, on the position on the interface, and on the temperature distribution $u^i$. The fifth condition, instead, says that the jump of the heat flux on the interface depends nonlinearly on the size of the inclusion, on the position on the interface, and on the temperature distribution $u^i$.

Since problem \eqref{princeq} is nonlinear, one cannot, {\it a priori}, claim that it has a solution. As a first result, we will prove that under suitable conditions on $F$ and $G$ and possibly shrinking $\epsilon_0$, problem \eqref{princeq} has a solution $(u^o_\epsilon,u^i_\epsilon) \in C^{1,\alpha}(\overline{\Omega(\epsilon)}) \times C^{1,\alpha}(\overline{\epsilon\Omega^i})$ for all $\epsilon \in ]0,\epsilon_0[$.
Then, we will turn to analyse the asymptotic behavior of the family of solutions $\{(u^o_\epsilon,u^i_\epsilon)\}_{\epsilon\in]0,\epsilon_0[}$ as $\epsilon$ approaches the degenerate value $0$. 

In literature, one of the most used approach to do that would be to write out an asymptotic expansion of $u^o_\epsilon$ and $u^i_\epsilon$. Asymptotic expansion techniques for singularly perturbed linear transmission problems have been exploited by several authors: here we mention the works of Ammari and collaborators \cite{AmGaKaLe, AmGaGiJiSe16, AmKa07, AmKaKi05, AmKaLi06, AmKaTo05}, Maz'ya, Movchan, and Nieves \cite{MaMoNi10}, Nieves \cite{Ni17}, Novotny and Soko\l owski \cite{NoSo13}, and, in particular, concerning nonlinear problems, Iguernane, Nazarov, Roche, Soko\l owski, and  Szulc \cite{IgNaRoSoSz09}.
Cluster of holes have been also considered in the work of Bonnaillie-No\"el, Dambrine, Tordeux, and Vial \cite{BoDaToVi09}, Bonnaillie-No\"el and Dambrine \cite{BoDa13}, and Bonnaillie-No\"el, Dambrine, and Lacave \cite{BoDaLa}.
Moreover, functional equation methods for the analysis of linear and nonlinear transmission problems in domains with circular inclusions have been applied, for example, in Castro,  Kapanadze, and Pesetskaya \cite{CaKaPe15}, Kapanadze,  Mishuris,  and  Pesetskaya \cite{KaMiPe15, KaMiPe15b}, Kapanadze, Miszuris, and Pesetskaya \cite{KaMiPe16}.
We mention that potential theoretic techniques have been widely exploited to study nonlinear boundary value problems with  transmission conditions  by  Berger, Warnecke, and Wendland \cite{BeWaWe90}, by Costabel and Stephan \cite{CoSt90}, by Gatica and Hsiao \cite{GaHs95}, and by Barrenechea and Gatica \cite{BaGa96}, and that boundary integral methods have been applied also by Mityushev and Rogosin for the analysis of transmission problems in the two dimensional plane (cf.~\cite[Chap.~5]{MiRo00}).
Finally, we point out that problems with small holes or inclusions have been analysed also from the numerical point of view, for example in the works of Chesnel and Claeys \cite{ChCl16}  and of Babu\v{s}ka, Soane,  and Suri \cite{BaSoSu17}.
We also mention the works of Mishuris, Miszuris and \"Ochsner \cite{MiMiOc07}, \cite{MiMiOc08}, and of Miszuris and \"Ochsner \cite{MiOc13}, in which transmission conditions are numerically tested with simulation based on the finite element method.

In this paper, instead, we adopt the functional analytic approach proposed by Lanza de Cristoforis for the analysis of singular perturbation problems in perforated domain (cf. Lanza de Cristoforis \cite{La02}). We prove that (suitable restriction of) the family of solutions $\{(u^o_\epsilon,u^i_\epsilon)\}_{\epsilon\in]0,\epsilon_0[}$ can be described in terms of real analytic functions of $\epsilon$. Such a result implies the possibility of expanding the solutions in terms of convergent power series of the singular perturbation parameters. We note that the functional analytic approach has been first applied to study boundary value problems in a domain with a small hole confined in the interior (cf. Lanza de Cristoforis \cite{La08}) and then has been extended to more involved geometric configurations, such as moderately close holes (cf. Dalla Riva and Musolino \cite{DaMu16, DaMu17}), holes approaching to the boundary (cf. Bonnaillie-No\"el, Dalla Riva, Dambrine, and Musolino \cite{BoDaDaMu}), and  perturbations close to the vertex of a sector (cf. Costabel, Dalla Riva, Dauge, and Musolino \cite{CoDaDaMu17}). Moreover, as in this paper, it has been applied also to boundary value problems with nonlinear conditions as in Lanza de Cristoforis \cite{La07} and Dalla Riva and Lanza de Cristoforis \cite{DaLa10}.

We observe that a similar problem, but with homogeneous contact conditions (i.e.~with $F$ and $G$ which do not depend on the position on the contact boundary) has been studied by Lanza de Cristoforis in \cite{La10} for a bounded domain with a small hole and in Lanza de Cristoforis and Musolino \cite{LaMu14} in the periodic setting. We also mention the work of Dalla Riva and Mishuris \cite{DaMi15}, where the existence of solutions for problem \eqref{princeq} has been investigated in the case of a ``big'' inclusion with a method based on the Schauder fixed point theorem. As we shall see, the analysis of this paper is instead based on the implicit function theorem.

We briefly summarize our strategy. We first introduce a suitable representation of a solution of problem \eqref{princeq} in terms of layer potentials with unknown densities. Then, by an appropriate change of variables and by exploiting the Taylor expansion of certain terms, we convert problem \eqref{princeq} into a system of nonlinear integral equations on the boundaries of $\Omega^o$ and $\Omega^i$.  The new system is constructed in such a way that we can use the implicit function theorem to analyse its solution around the degenerate case when $\epsilon=0$. In such a way,  we find the unknown densities as implicit functions and we deduce that they depend real analytically on $\epsilon$. Finally, we exploit again the integral representation of the solutions to prove the existence of $u^o_\epsilon$ and $u^i_\epsilon$ and to analyse their dependence on $\epsilon$. 

This paper is organised as follows. Section \ref{notation} is a section of preliminaries. In Section \ref{classresult} we introduce some classical notions and results of potential theory. In Section \ref{preliminaryresults} we prove two technical lemmas that we exploit in Section \ref{formulation} to convert problem \eqref{princeq} into a system of nonlinear integral equations. In Section \ref{limitingsystem} we analyse the limiting system, i.e.~the system obtained for $\epsilon=0$. In Section \ref{applicationIFT} we apply the implicit function theorem to obtain a real analytic continuation result for the unknown densities of the integral equation system. In Section \ref{analrapprfamsol} we state our main Theorem \ref{uesol} where we show an existence result for $u^o_\epsilon$ and $u^i_\epsilon$ and we analyse their dependence on $\epsilon$ in Theorem \ref{uanal}.

\section{Notation}\label{notation}
We denote the norm of a real normed space $X$ by $\| \cdot \| _X$. We denote by $I_X$ the identity operator from $X$ to itself and we omit the subscript $X$ where no ambiguity can occur.  For $x \in X$ and $R>0$, we denote by $B_X(x,R) \equiv \{y\in X : \|y-x\|_X < R \}$, when $X=\R^d$, $d \in \N\setminus\{0,1\}$, we simply write $B_d(x,R)$ and when $X = \R$ we write $B(x,R)$. If $X$ and $Y$ are normed spaces we endow the product space $X \times Y$ with the norm defined by $\| (x,y) \|_{X \times Y} \equiv \|x\|_X + \|y\|_Y $ for all $(x,y) \in X \times Y$, while we use the Euclidean norm for $\R^d$, $d\in\mathbb{N}\setminus\{0,1\}$. We denote by $\mathcal{L}(X,Y)$ the Banach space of linear and continuous map of $X$ to $Y$, equipped with its usual norm of the uniform convergence on the unit sphere of $X$. If $U$ is an open subset of $X$, and $F:U \to Y$ is a Fr\'echet-differentiable map in $U$, we denote the differential of $F$ by $dF$. Higher order differentials are denoted by $d^m F$, $m \in \N\setminus\{0,1\}$. 
The inverse function of an invertible function $f$ is denoted by $f^{(-1)}$, while the reciprocal of a function $g$ or the inverse of an invertible matrix $A$ are denoted by $g^{-1}$ and $A^{-1}$ respectively. Let $\Omega \subseteq \R^n$. Then $\overline{\Omega}$ denotes the closure of $\Omega$ in $\R^n$, $\partial \Omega$ denotes the boundary of $\Omega$, and $\nu_\Omega$ denotes the outward unit normal to $\partial \Omega$.
For $x \in \R^d$, $x_j$ denotes the $j$-th coordinate of $x$, $|x|$ denotes the Euclidean modulus of $x$ in $\R^d$.
Let $\Omega$ be an open subset of $\R^n$ and $m \in \N \setminus \{0\}$. The space of $m$ times continuously differentiable real-valued function on $\Omega$ is denoted by $C^m(\Omega,\R)$ or more simply by $C^m(\Omega)$ . Let $r \in \N \setminus \{0\}$, $f \in (C^m(\Omega))^r$. The $s$-th component of $f$ is denoted by $f_s$ and the gradient matrix of $f$ is denoted by $\nabla f$. Let $\eta=(\eta_1, \dots ,\eta_n) \in \N^n$ and $|\eta|=\eta_1+ \dots+\eta_n$. Then $D^\eta f \equiv \frac{\partial^{|\eta|}f}{\partial x^{\eta_1}_1, \dots , \partial x^{\eta_n}_n}$. If $r=1$, the Hessian matrix of the second-order partial derivatives of $f$ is denoted by $D^2 f$. The subspace of $C^m(\Omega)$ of those functions $f$ such that $f$ and its derivatives $D^\eta f$ of order $|\eta|\le m$ can be extended with continuity to $\overline{\Omega}$ is denoted $C^m(\overline{\Omega})$. The subspace of $C^m(\overline{\Omega})$ whose functions have $m$-the order derivatives that are H\"{o}lder continuous with exponent $\alpha \in ]0,1[$ is denoted $C^{m,\alpha}(\overline{\Omega})$.  If $f \in C^{0,\alpha}(\overline{\Omega})$, then its H\"{o}lder constant is defined as $|f : \Omega|_\alpha\equiv \mbox{sup} \left\{\frac{|f(x)-f(y)|}{|x-y|^\alpha} : x,y \in \overline{\Omega}, x \neq y \right\}$. The space $C^{m,\alpha}(\overline{\Omega})$, equipped with its usual norm $\|f\|_{C^{m,\alpha}(\overline{\Omega})} \equiv \|f\|_{C^{m}(\overline{\Omega})} + \sum_{|\eta|=m}{|D^\eta f : \Omega|_\alpha}$, is well know to be a Banach space. 
We denote by $C^{m,\alpha}_{\mathrm{loc}}(\R^n \setminus \Omega)$ the space of functions on $\R^n \setminus \Omega$ whose restriction to $\overline{U}$ belongs to $C^{m,\alpha}(\overline{U})$ for all open bounded subsets $U$ of $\R^n \setminus \Omega$. On $C^{m,\alpha}_{\mathrm{loc}}(\R^n \setminus \Omega)$ we consider the natural structure of Fr\'echet space. Finally we set
\begin{equation*}
	C^{m,\alpha}_{\mathrm{harm}}(\overline{\Omega}) \equiv \{ u \in C^{m,\alpha}(\overline{\Omega}) \cap C^2(\Omega): \Delta u = 0 \text{ in } \Omega \}.
\end{equation*}
We say that a bounded open subset of $\R^n$ is of class $C^{m,\alpha}$ if it is a manifold with boundary imbedded in $\R^n$ of class $C^{m,\alpha}$. In particular if $\Omega$ is a $C^{1,\alpha}$ subset of $\R^n$, then $\partial\Omega$ is a $C^{1,\alpha}$ sub-manifold of $\R^n$ of co-dimension $1$.  
If $M$ is a $C^{m,\alpha}$ sub-manifold of $\R^n$ of dimension $d\ge 1$, we define the space $C^{m,\alpha}(M)$ by exploiting a finite local parametrization. Namely, we take a finite open covering $\mathcal{U}_1, \dots, \mathcal{U}_k$ of $M$ and  $C^{m,\alpha}$ local parametrization maps $\gamma_l : \overline{B_{d}(0,1)} \to \overline{\mathcal{U}_l}$ with $l=1,\dots, k$ and we say that $\phi\in C^{m,\alpha}(M)$ if and only if $\phi\circ\gamma_l\in C^{m,\alpha}(\overline{B_{d}(0,1)})$ for all $l=1,\dots, k$. Then for all $\phi\in C^{m,\alpha}(M)$ we define
\[
\|\phi\|_{C^{m,\alpha}(M)} \equiv \sum_{l=1}^k \|\phi\circ\gamma_l \|_{C^{m,\alpha}(\overline{B_{d}(0,1)})}\,.
\]
One verifies that different $C^{m,\alpha}$ finite atlases define the same space  $C^{m,\alpha}(M)$ and equivalent norms on it.
We retain the standard notion for the Lebesgue spaces $L^p$, $p\ge 1$. If $\partial\Omega$ is measurable then we denote by $d\sigma$ the area element on $\partial\Omega$. If $Z$ is a subspace of $L^1(\partial \Omega)$, then we set
\begin{equation*}
Z_0 \equiv \left\{ f \in Z : \int_{\partial\Omega} f \,d\sigma = 0 \right\}.
\end{equation*}

\section{Classical notions of potential theory}\label{classresult}
For the proofs of the results of this section we refer to Folland \cite{Fo95}, Gilbarg and Trudinger \cite{GiTr83}, Schauder \cite{Sc31}, and to the references therein.
\begin{defin}
	We denote by $S_n$ the function from $\R^n \setminus \{0\}$ to $\R$ defined by
\[
	S_n(x) \equiv	\frac{|x|^{2-n}}{(2-n) s_n} \qquad \forall x \in \R^n \setminus \{0\} 
\]
	where $s_n$ denotes the $(n-1)$-dimensional measure of $\partial B_n(0,1)$.
\end{defin}    

$S_n$ is well known to be a fundamental solution of the Laplace operator.
Now let $\Omega$ be an open bounded subset of $\R^n$ of class $C^{1,\alpha}$. We define
\begin{equation*}
\Omega^- \equiv \R^n \setminus \overline{\Omega}.
\end{equation*}

\begin{defin}
	We denote by $v_{\Omega}[\mu]$ the single layer potential with density $\mu$ given by
	\begin{equation*}
	v_{\Omega}[\mu](x) \equiv \int_{\partial \Omega}{S_n(x-y) \mu(y) \,d\sigma_y} \qquad \forall x \in \R^n
	\end{equation*}
	for all $\mu \in C^{0,\alpha}(\partial\Omega)$.
	
	We denote by $w_{\Omega}[\mu]$ the double layer potential with density $\mu$ given by
	\begin{equation*}
	w_{\Omega}[\mu](x) \equiv - \int_{\partial \Omega}{\nu_\Omega(y) \cdot \nabla S_n(x-y) \mu(y) \,d\sigma_y} \qquad \forall x \in \R^n.	
	\end{equation*}
	for all $\mu \in \CO$.
\end{defin}
It is well known that, if $\mu \in C^{0,\alpha}(\partial\Omega)$, then $v_{\Omega}[\mu] \in C^0(\R^n)$. We set 
\begin{equation*}
v^+_{\Omega}[\mu]\equiv v_{\Omega}[\mu]_{| \overline{\Omega}}, \qquad v^-_{\Omega}[\mu] \equiv v_{\Omega}[\mu]_{| \overline{\Omega^-}}.
\end{equation*}
Moreover, if $\mu \in \CO$, then $w_{\Omega}[\mu]_{| \Omega}$ admits a unique continuous extension to $\overline{\Omega}$, which we denote by $w^+_{\Omega}[\mu]$, and $w_{\Omega}[\mu]_{| \Omega^-}$ admits a unique continuous extension to $\overline{\Omega^-}$, which we denote by $w^-_{\Omega}[\mu]$.

\begin{defin}
	We denote by $W_{\partial\Omega}[\mu]$ the boundary integral operator defined by
	\[
	W_{\partial\Omega}[\mu](x) \equiv - \int_{\partial \Omega}{\nu_\Omega(y) \cdot \nabla S_n(x-y) \mu(y) \,d\sigma_y} \qquad \forall x \in \partial \Omega
	\]
	for all $\mu \in \CO$.
	
	We denote by $V_\Omega$ the operator from $C^{0,\alpha}(\partial\Omega)$ to $\CO$ which takes $\mu$ to $V_\Omega[\mu]$ defined by
	\[
	V_{\Omega}[\mu]\equiv v_{\Omega}[\mu]_{|\partial\Omega}.
	\]
	
	We denote by $W^\ast_{\partial\Omega}[\mu]$ the boundary integral operator defined by
	\[
	W^\ast_{\partial\Omega}[\mu](x) \equiv \int_{\partial \Omega}{\nu_\Omega(x) \cdot \nabla S_n(x-y) \mu(y) \,d\sigma_y} \qquad \forall x \in \partial \Omega 
	\]
	for all $\mu \in C^{0,\alpha}(\partial\Omega)$.
\end{defin}

One verifies that $W_{\partial\Omega}: \CO \to \CO$ and $W^\ast_{\partial\Omega}: C^{0,\alpha}(\partial\Omega) \to C^{0,\alpha}(\partial\Omega)$ are transpose one to the other with respect to the duality of $\CO \times C^{0,\alpha}(\partial\Omega)$ induced by the inner product of $L^2(\partial\Omega)$.

In the following Theorem \ref{sdp} we summarize some classical results of potential theory.

\begin{teo}[\bf{Property of single and double layer potentials}]\label{sdp}
	The following statements hold.
	\begin{enumerate}
		\item[(i)] For all $\mu \in C^{0,\alpha}(\partial\Omega)$, the function $v_{\Omega}[\mu]$ is harmonic in $\R^n\setminus \partial\Omega$ and at infinity.
		
		\item[(ii)] If $\mu \in C^{0,\alpha}(\partial\Omega)$, then $v^+_{\Omega}[\mu] \in C^{1,\alpha}(\overline{\Omega})$ and the map from $C^{0,\alpha}(\partial\Omega)$ to $C^{1,\alpha}(\overline{\Omega})$ which takes $\mu$ to $v^+_{\Omega}[\mu]$ is linear and continuous. Moreover, $v^-_{\Omega}[\mu] \in C^{1,\alpha}_{\mathrm{loc}}(\overline{\Omega^-})$ and the map from $C^{0,\alpha}(\partial \Omega)$ to $C^{1,\alpha}_{\mathrm{loc}}(\overline{\Omega^-})$ which takes $\mu$ to $v^-_{\Omega}[\mu]$ is linear and continuous.
		
		\item[(iii)] If $\mu \in C^{0,\alpha}(\partial\Omega)$, then we have following jump relations
		\begin{equation*}
			\nu_\Omega \cdot \nabla v^\pm_{\Omega}[\mu] (x) = \left( \mp \frac{1}{2} I + W^\ast_{\partial\Omega} \right)[\mu](x) \qquad \forall x \in \partial \Omega.
		\end{equation*}	
		
		\item[(iv)] The operator $V_{\partial\Omega}$ is an isomorphism from $C^{0,\alpha}(\partial\Omega)$ to $\CO$.
		
		\item[(v)] For all $\mu \in \CO$, the function $w_{\Omega}[\mu](\cdot)$ is harmonic in $\R^n\setminus \partial\Omega$ and it is harmonic at infinity. Moreover, we have the following jump relations
		\begin{equation*}
		w^\pm_{\Omega}[\mu](x) = \left( \pm \frac{1}{2} I + W_{\partial\Omega} \right)[\mu](x) \qquad \forall x \in \partial \Omega.
		\end{equation*}
		
		\item[(vi)] Let $\mu \in C^{1,\alpha}(\partial\Omega)$. Then $w^+_{\Omega}[\mu] \in C^{1,\alpha}(\overline{\Omega})$ and $w^-_{\Omega}[\mu] \in C^{1,\alpha}_{\mathrm{loc}}(\overline{\Omega^-})$ and we have
		\begin{equation*}
		\nu_\Omega \cdot \nabla w^+_{\Omega}[\mu] - \nu_\Omega \cdot \nabla w^-_{\Omega}[\mu] = 0 \qquad \text{on } \partial \Omega.
		\end{equation*}
		
		\item[(vii)] The map from $C^{1,\alpha}(\partial\Omega)$ to $C^{1,\alpha}(\overline{\Omega})$ which takes $\mu$ to $w^+_{\Omega}[\mu]$ is linear and continuous and the map from $C^{1,\alpha}(\partial\Omega)$ to $C^{1,\alpha}_{\mathrm{loc}}(\overline{\Omega^-})$ which takes $\mu$ to $w^-_{\Omega}[\mu]$ is linear and continuous.
		
		\item[(viii)]  If $u\in C^{1,\alpha}_{\mathrm{harm}}(\overline\Omega)$, then 
		\[
		{w}_\Omega[u_{|\partial\Omega}](x)-{v}_\Omega[\nu_\Omega\cdot\nabla u_{|\partial\Omega}](x)=
		\begin{cases}
		u(x)&\text{if }x\in\Omega\,,\\
		\frac{1}{2} u(x)&\text{if }x\in\partial\Omega\,,\\
		0&\text{if }x\in\Omega^-\,.
		\end{cases}
		\]
	
	\end{enumerate}
\end{teo}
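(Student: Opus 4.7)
The plan is to treat each of the eight items as a separate classical statement, all following from the fundamental solution property of $S_n$ combined with the classical Calderon--Zygmund and Schauder theory for boundary integral operators on domains of class $C^{1,\alpha}$. Since the statements intertwine regularity, trace jumps, and Fredholm-type solvability, I would proceed in the order (i), (v-harmonicity), (viii), then the regularity items (ii), (vii), then the jump relations (iii), (v-jump), (vi), and finally the isomorphism (iv), which depends on the others.

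For (i) and the harmonicity part of (v), I would differentiate under the integral sign off of $\partial\Omega$, using that $\Delta_x S_n(x-y)=0$ for $x\neq y$; harmonicity at infinity (in the sense that $v_\Omega[\mu](x),w_\Omega[\mu](x)\to 0$ as $|x|\to\infty$ with the correct decay, namely $O(|x|^{2-n})$ and $O(|x|^{1-n})$ respectively) follows from direct estimates on the kernel since $n\ge 3$. Item (viii), the Green's third representation formula, I would obtain by applying the second Green identity to $u\in C^{1,\alpha}_{\mathrm{harm}}(\overline{\Omega})$ and to $S_n(x-\cdot)$ on $\Omega\setminus\overline{B_n(x,r)}$ when $x\in\Omega$, then passing to the limit $r\to 0^+$ and using the mean value property of harmonic functions to evaluate the contribution on the small sphere; the trace case $x\in\partial\Omega$ is the standard half-ball computation exploiting the $C^{1,\alpha}$ regularity of $\partial\Omega$ to replace the solid angle by $1/2$, while the exterior case is immediate as the integrand is harmonic in $\Omega$.

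The regularity statements (ii) and (vii) are the technical heart, and I expect them to be the main obstacle. The strategy is to localize using a finite $C^{1,\alpha}$ atlas $\{\gamma_l\}$ of $\partial\Omega$, straighten the boundary locally, and analyse the resulting singular kernels as pseudodifferential-type operators on Hölder spaces; the key fact is that the kernel $\nu_\Omega(y)\cdot\nabla S_n(x-y)$ has a weak singularity on $C^{1,\alpha}$ boundaries (the normal component of $x-y$ on $\partial\Omega$ is itself of order $|x-y|^{1+\alpha}$), which reduces $W_{\partial\Omega}$ and $W^*_{\partial\Omega}$ to operators with integrable kernels on $\partial\Omega$ and, after finer analysis, bounded on the Hölder scales. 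Combining this with the standard Miranda--Agmon argument expresses the tangential gradient of $v^\pm_\Omega[\mu]$ on $\partial\Omega$ in terms of $W^*_{\partial\Omega}[\mu]$ and tangential derivative operators, yielding the $C^{1,\alpha}$ bound; similarly the tangential derivative of $w_\Omega[\mu]$ reduces (through an integration by parts on $\partial\Omega$ exploiting $\mu\in C^{1,\alpha}$) to a single layer of tangential derivatives of $\mu$, which gives (vii) together with the continuity of the no-jump relation in (vi). The jump relations (iii) and the jump in (v) are then obtained by subtracting the ``frozen density'' term $\mu(x_0)$ near $x_0\in\partial\Omega$ and using that $w_\Omega[1]$ equals $1$ in $\Omega$ and $0$ in $\Omega^-$ (by (viii) applied to $u\equiv 1$), which produces the $\pm\frac{1}{2}\mu$ contribution, while the remaining integrals extend continuously across $\partial\Omega$ thanks to the estimates just mentioned.

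Finally, for the isomorphism (iv), I would split into injectivity and surjectivity. For injectivity, if $V_\Omega[\mu]=0$, then $v^+_\Omega[\mu]\equiv 0$ in $\overline{\Omega}$ by the maximum principle, and $v^-_\Omega[\mu]\equiv 0$ in $\overline{\Omega^-}$ by the maximum principle combined with the decay at infinity (where $n\ge 3$ is essential); the jump formula (iii) then forces $\mu=0$. For surjectivity, I would note that $V_\Omega$ is a Fredholm operator of index zero from $C^{0,\alpha}(\partial\Omega)$ to $C^{1,\alpha}(\partial\Omega)$, either by writing it as a compact perturbation of an invertible model operator via localization on charts, or by the Riesz--Schauder theory applied to $V_\Omega$ composed with the inverse of the Dirichlet-to-Neumann map; injectivity then yields bijectivity and continuity of the inverse follows from the open mapping theorem. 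The main subtlety I anticipate, beyond the regularity analysis of item (ii)--(vii), is keeping track of the interplay between $C^{0,\alpha}$ and $C^{1,\alpha}$ regularity in the Fredholm step, which is why reducing to Folland or Gilbarg--Trudinger at this stage is the most economical path.
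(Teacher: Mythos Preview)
Your outline is a reasonable sketch of the classical arguments, but you should be aware that the paper does not actually prove this theorem: at the start of Section~\ref{classresult} the author writes that for the proofs of the results of that section one is referred to Folland, Gilbarg and Trudinger, Schauder, and the references therein. So there is nothing to compare against beyond noting that your plan is broadly the strategy one finds in those sources (differentiation under the integral for harmonicity, Green's second identity on $\Omega\setminus\overline{B_n(x,r)}$ for the representation formula, localization and flattening for the Schauder estimates, the frozen-density argument for the jump relations, and energy/maximum-principle injectivity plus Fredholm index zero for the isomorphism of $V_\Omega$). In short, the paper treats Theorem~\ref{sdp} as background and cites it; your proposal simply unpacks what such a citation would contain, and is consistent with it.
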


Moreover, the following classical result by Schauder holds.

\begin{teo}\label{Schaudercompact}
	The map which takes $\psi$ to $W_{\partial\Omega}[\psi]$ is compact from $C^{1,\alpha}(\partial\Omega)$ to itself.
\end{teo}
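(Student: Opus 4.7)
The plan is to show compactness of $W_{\partial\Omega}$ by factoring it through a subspace of $C^{1,\alpha}(\partial\Omega)$ that embeds compactly, which reduces to a regularity-gain estimate. First, I would analyse the kernel $K(x,y) := -\nu_\Omega(y)\cdot\nabla S_n(x-y) = s_n^{-1}(x-y)\cdot\nu_\Omega(y)/|x-y|^n$. Since $\partial\Omega$ is of class $C^{1,\alpha}$, the Hölder regularity of the outward unit normal yields, for $x,y\in\partial\Omega$ sufficiently close, the cancellation $|(x-y)\cdot\nu_\Omega(y)|\le C|x-y|^{1+\alpha}$, so $K$ is only weakly singular on the diagonal, with $|K(x,y)|\le C|x-y|^{1-n+\alpha}$.

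Next I would transfer the analysis to local charts. Using a finite atlas $\{\mathcal{U}_l,\gamma_l\}_{l=1}^{k}$ as in Section \ref{notation}, the function $W_{\partial\Omega}[\mu]\circ\gamma_l$ becomes an integral operator on $\overline{B_{n-1}(0,1)}$ whose kernel inherits the weak singularity above, multiplied by a bounded and $C^{0,\alpha}$ geometric Jacobian factor. I would then apply Giraud-type estimates for integrals with weakly singular kernels to show that $W_{\partial\Omega}$ maps $C^{1,\alpha}(\partial\Omega)$ continuously into $C^{1,\beta}(\partial\Omega)$ for some $\beta\in(\alpha,1)$. The delicate point is to estimate the tangential derivatives of $W_{\partial\Omega}[\mu]$: for a tangential derivative $\partial_\tau$, I would split
\[
\partial_{\tau,x}\int_{\partial\Omega} K(x,y)\mu(y)\,d\sigma_y = \partial_{\tau,x}\int_{\partial\Omega} K(x,y)\bigl(\mu(y)-\mu(x)\bigr)d\sigma_y + \mu(x)\,\partial_{\tau,x}\int_{\partial\Omega} K(x,y)\,d\sigma_y,
\]
and exploit the cancellation of $\mu(y)-\mu(x)$ together with the weak singularity of $K$ to rewrite the first term as an absolutely convergent integral whose Hölder modulus is controlled by $\|\mu\|_{C^{1,\alpha}(\partial\Omega)}$. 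The second term is handled via the fact that the constant-density integral $\int_{\partial\Omega}K(x,y)\,d\sigma_y$ is a well-known geometric quantity whose Hölder regularity can be estimated in the same way.

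To conclude, I would invoke the fact that on the compact submanifold $\partial\Omega$, the embedding $C^{1,\beta}(\partial\Omega)\hookrightarrow C^{1,\alpha}(\partial\Omega)$ for $\beta>\alpha$ is compact, which follows from the Arzel\`a--Ascoli theorem applied in each chart $\overline{B_{n-1}(0,1)}$ via the definition of the $C^{m,\alpha}(M)$ norms from Section \ref{notation}. Composing with the continuous map into $C^{1,\beta}(\partial\Omega)$ gives the desired compactness of $W_{\partial\Omega}$ on $C^{1,\alpha}(\partial\Omega)$.

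The main obstacle is the regularity-gain step itself: with the boundary only of class $C^{1,\alpha}$, the unit normal is only Hölder continuous of exponent $\alpha$ and one cannot bootstrap to arbitrarily high regularity. Should a direct proof of continuity $C^{1,\alpha}\to C^{1,\beta}$ with $\beta>\alpha$ prove too delicate at this minimal regularity, a fallback is an approximation argument: truncate the kernel as $K_\varepsilon(x,y):=K(x,y)\,\chi_{\{|x-y|\ge\varepsilon\}}$ to obtain operators $W_\varepsilon$ with bounded and Hölder-continuous kernels, which take $C^{1,\alpha}(\partial\Omega)$ into functions satisfying uniform Lipschitz-type estimates and are therefore compact; then show $W_\varepsilon\to W_{\partial\Omega}$ in $\mathcal{L}(C^{1,\alpha}(\partial\Omega),C^{1,\alpha}(\partial\Omega))$ as $\varepsilon\to 0^+$ using the integrable singularity of $K$, so that $W_{\partial\Omega}$ is a norm limit of compact operators and hence compact.
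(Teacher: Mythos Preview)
The paper does not give its own proof of this statement: Theorem~\ref{Schaudercompact} is simply stated as ``the following classical result by Schauder'' and the reader is referred to Schauder's 1931 paper and to the general references listed at the start of Section~\ref{classresult}. So there is no argument in the paper to compare against; your proposal is an attempt to supply a proof where the paper only cites one.

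That said, both routes you sketch have genuine gaps at the minimal regularity $\partial\Omega\in C^{1,\alpha}$. For the primary approach, the hoped-for continuity $W_{\partial\Omega}:C^{1,\alpha}(\partial\Omega)\to C^{1,\beta}(\partial\Omega)$ with $\beta>\alpha$ is not available in general: since $\nu_\Omega$ is only $C^{0,\alpha}$, the kernel $K$ and its tangential $x$-derivatives inherit at most $\alpha$-H\"older behaviour, and the Giraud-type estimates you invoke will not produce an exponent strictly larger than $\alpha$. You yourself flag this obstacle, and it is real, not merely ``delicate''. For the fallback, the sharp truncation $K_\varepsilon=K\,\chi_{\{|x-y|\ge\varepsilon\}}$ does \emph{not} yield a ``bounded and H\"older-continuous'' kernel: it has a jump along $\{|x-y|=\varepsilon\}$, the domain of integration varies non-smoothly with $x$, and there is no reason for $W_\varepsilon$ to map into $C^{1,\alpha}(\partial\Omega)$, let alone to be compact there. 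Replacing $\chi$ by a smooth cutoff fixes the compactness of $W_\varepsilon$, but then the claim $W_\varepsilon\to W_{\partial\Omega}$ in $\mathcal{L}(C^{1,\alpha},C^{1,\alpha})$ cannot be justified ``using the integrable singularity of $K$'' alone: the tangential derivative of $K$ behaves like $|x-y|^{\alpha-n}$, whose surface integral over $\{|x-y|<\varepsilon\}$ diverges, so one must exploit further cancellation, and you have not indicated how.

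If you want a self-contained proof, a more robust line is the one actually due to Schauder (and reproduced e.g.\ in Miranda or in the Lanza~de~Cristoforis school): show that on a $C^{1,\alpha}$ boundary the weak singularity $|K(x,y)|\le C|x-y|^{1+\alpha-n}$ yields continuity of $W_{\partial\Omega}$ from $C^{0}(\partial\Omega)$ into $C^{0,\alpha}(\partial\Omega)$, hence compactness on $C^{0,\alpha}(\partial\Omega)$; then pass to $C^{1,\alpha}$ by expressing tangential derivatives of $W_{\partial\Omega}[\mu]$ through integral operators (with the same type of weak singularity) acting on $\mu$ and its tangential gradient, so that compactness on $C^{1,\alpha}$ reduces to the $C^{0,\alpha}$ case already established.
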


We summarize some results concerning the null-spaces $\mathrm{Ker} \left(\pm\frac{1}{2} I + W_{\partial\Omega}\right)$ of the operators $\pm\frac{1}{2} I + W_{\partial\Omega}$ from $\CO$ to itself.
\begin{teo}\label{fredalt}
	Let $\Omega_1, \dots , \Omega_N$ be the bounded connected components of $\Omega$ and $\Omega^-_0, \Omega^-_1, \dots , \Omega^-_M$ be the connected components of $\Omega^-$. Assume that $\Omega^-_1, \dots , \Omega^-_M$ are bounded and that $\Omega^-_0$ is unbounded. Then the following statements hold.
	\begin{enumerate}		
		\item[(i)] $\mathrm{Ker} \left(\frac{1}{2} I + W_{\partial\Omega}\right)$ consists of the functions from $\partial \Omega$ to $\R$ which are constant on $\partial\Omega^-_j$ for all $j \in \{1, \dots , M\}$ and which are identically equal to $0$ on $\partial\Omega^-_0$.
		
		\item[(ii)] $\mathrm{Ker} \left(-\frac{1}{2} I + W_{\partial\Omega}\right)$ consists of the functions from $\partial \Omega$ to $\R$ which are constant on $\partial\Omega_j$ for all $j \in \{1, \dots , N\}$.
	\end{enumerate}
\end{teo}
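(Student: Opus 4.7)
The approach uses two tools from Theorem~\ref{sdp}: the jump formulas $w^\pm_{\Omega}[\mu]=(\pm\frac{1}{2}I+W_{\partial\Omega})[\mu]$ (which in particular yield $w^+_{\Omega}[\mu]-w^-_{\Omega}[\mu]=\mu$ on $\partial\Omega$), and the normal derivative continuity in (vi). Combining these with uniqueness of the interior and exterior Dirichlet and Neumann problems for the Laplacian (the latter in the form that a harmonic function on the unbounded component $\Omega^-_0$ which is harmonic at infinity and has zero Neumann trace must vanish, valid for $n\ge 3$) will give both inclusions in (i) and (ii).

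For the forward direction of (ii), suppose $(-\frac{1}{2}I+W_{\partial\Omega})[\mu]=0$, so $w^-_{\Omega}[\mu]$ vanishes on $\partial\Omega$. Exterior Dirichlet uniqueness then forces $w^-_{\Omega}[\mu]\equiv 0$ in $\overline{\Omega^-}$; consequently $\nu_\Omega\cdot\nabla w^-_{\Omega}[\mu]=0$ on $\partial\Omega$, and by (vi) also $\nu_\Omega\cdot\nabla w^+_{\Omega}[\mu]=0$. Thus $w^+_{\Omega}[\mu]$ is harmonic in $\Omega$ with zero Neumann data, hence constant on each $\Omega_j$, and the trace identity yields $\mu=w^+_{\Omega}[\mu]|_{\partial\Omega}$, which is constant on each $\partial\Omega_j$. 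Part (i) is symmetric: if $(\frac{1}{2}I+W_{\partial\Omega})[\mu]=0$, then $w^+_{\Omega}[\mu]\equiv 0$ in $\overline{\Omega}$ by interior Dirichlet uniqueness, so by (vi) $w^-_{\Omega}[\mu]$ solves an exterior Neumann problem with zero data; it is therefore constant on each bounded $\Omega^-_k$ ($k=1,\dots,M$), and it must vanish on $\Omega^-_0$ since it is harmonic at infinity. Hence $\mu=-w^-_{\Omega}[\mu]|_{\partial\Omega}$ has the claimed form.

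For the converse, given $\mu$ of the described form, I would realise $w_\Omega[\mu]$ explicitly as a piecewise constant function: in case (i) set $u\equiv -a_k$ on each $\Omega^-_k$ (for $k\ge 1$, with $a_k$ the value of $\mu$ on $\partial\Omega^-_k$) and $u\equiv 0$ on $\Omega\cup\Omega^-_0$; in case (ii) set $u\equiv c_j$ on each $\Omega_j$ and $u\equiv 0$ on $\Omega^-$. In either case $u$ is harmonic off $\partial\Omega$, harmonic at infinity, has jump $\mu$ across $\partial\Omega$, and has vanishing normal derivative on each side. The main technical point is that these properties characterise $w_\Omega[\mu]$ uniquely: the difference $u-w_\Omega[\mu]$ extends harmonically across $\partial\Omega$ (being continuous there and having no jump in normal derivative, by a standard removable-singularity argument for a $C^{1,\alpha}$ hypersurface), is globally harmonic on $\R^n$, and vanishes at infinity, hence is identically $0$. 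Reading off $w^+_\Omega[\mu]=0$ on $\partial\Omega$ in case (i), respectively $w^-_\Omega[\mu]=0$ in case (ii), and applying the jump formulas then yields $\mu\in\mathrm{Ker}(\frac{1}{2}I+W_{\partial\Omega})$, respectively $\mu\in\mathrm{Ker}(-\frac{1}{2}I+W_{\partial\Omega})$, completing the proof.
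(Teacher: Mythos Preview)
The paper does not actually supply a proof of Theorem~\ref{fredalt}; it is listed among the classical results of Section~\ref{classresult}, for which the reader is referred to Folland, Gilbarg--Trudinger, and Schauder. So there is no ``paper's own proof'' to compare against, and your proposal has to be judged on its own.

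Your argument is correct and follows the standard route. The forward directions are clean: vanishing of $w^\pm_\Omega[\mu]$ on $\partial\Omega$ forces it to vanish on the corresponding side by Dirichlet uniqueness (using harmonicity at infinity on $\Omega^-_0$), then Theorem~\ref{sdp}(vi) transmits the zero Neumann data to the other side, and Neumann uniqueness gives piecewise constancy (and vanishing on $\Omega^-_0$). The identification $\mu=w^+_\Omega[\mu]-w^-_\Omega[\mu]$ on $\partial\Omega$ then yields the claimed description.

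For the converse you build $u$ piecewise constant, match the jumps of $w_\Omega[\mu]$, and use a Weyl--Liouville argument to conclude $u=w_\Omega[\mu]$. This works, but it is heavier than necessary. In case~(ii) you can bypass the removable--singularity step entirely by invoking Theorem~\ref{sdp}(viii) with $u$ the function equal to $c_j$ on $\Omega_j$: since $\nabla u=0$, (viii) gives $W_{\partial\Omega}[\mu]=\frac{1}{2}\mu$ on $\partial\Omega$ directly, i.e.\ $(-\frac{1}{2}I+W_{\partial\Omega})[\mu]=0$. For case~(i) one applies the same identity on each bounded $\Omega^-_k$ (with the normal reversed) and uses that the double layer of a constant over $\partial\Omega^-_0$ vanishes on $\partial\Omega^-_0$ from the unbounded side; this again yields $(\frac{1}{2}I+W_{\partial\Omega})[\mu]=0$ by a short computation. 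Either way, your proof is sound.
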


Finally we recall the following important result which will be widely used in the sequel.

\begin{teo}\label{frediso}
	The following statements hold.
	\begin{enumerate}
		\item[(i)] The operators $\pm \frac{1}{2} I + W_{\partial\Omega}$ are Fredholm of index $0$ from $\CO$ to itself.
				
		\item[(ii)] If $\tau \in ]-1,1[$, then the operator $\frac{1}{2} I + \tau W^\ast_{\partial\Omega}$ is an isomorphism from $C^{0,\alpha}(\partial\Omega)$ to itself.
	\end{enumerate}
\end{teo}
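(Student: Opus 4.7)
The first statement is immediate. By Theorem \ref{Schaudercompact}, $W_{\partial\Omega}$ is compact from $\CO$ to itself, so $\pm\tfrac{1}{2}I+W_{\partial\Omega}$ is a compact perturbation of the isomorphism $\pm\tfrac{1}{2}I$ and is therefore Fredholm of index zero. A completely analogous Schauder-type argument, which is classical, shows that $W^{\ast}_{\partial\Omega}$ is compact on $C^{0,\alpha}(\partial\Omega)$, so $\tfrac{1}{2}I+\tau W^{\ast}_{\partial\Omega}$ is likewise Fredholm of index zero for every $\tau\in\R$. To obtain (ii) it therefore suffices to prove injectivity of $\tfrac{1}{2}I+\tau W^{\ast}_{\partial\Omega}$ whenever $\tau\in\,]-1,1[\,$.

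The case $\tau=0$ is trivial. For $\tau\in\,]-1,1[\,\setminus\{0\}$, suppose $\mu\in C^{0,\alpha}(\partial\Omega)$ satisfies $(\tfrac{1}{2}I+\tau W^{\ast}_{\partial\Omega})[\mu]=0$, i.e.\ $W^{\ast}_{\partial\Omega}[\mu]=-\tfrac{1}{2\tau}\mu$. Substituting this into the jump relations of Theorem \ref{sdp}(iii) for $v\equiv v_{\Omega}[\mu]$ yields
\[
\nu_{\Omega}\cdot\nabla v^{+}_{\Omega}[\mu]=-\frac{\tau+1}{2\tau}\mu,\qquad \nu_{\Omega}\cdot\nabla v^{-}_{\Omega}[\mu]=\frac{\tau-1}{2\tau}\mu\qquad\text{on }\partial\Omega.
\]
Since $n\ge 3$, $v_{\Omega}[\mu](x)=O(|x|^{2-n})$ and $\nabla v_{\Omega}[\mu](x)=O(|x|^{1-n})$ as $|x|\to\infty$, so the boundary term at infinity in Green's identity vanishes. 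Setting $I\equiv\int_{\partial\Omega}V_{\Omega}[\mu]\,\mu\,d\sigma$ and using continuity of $v$ across $\partial\Omega$, Green's identity applied on $\Omega$ and on $\R^{n}\setminus\overline{\Omega}$ gives
\[
\int_{\Omega}|\nabla v^{+}_{\Omega}[\mu]|^{2}\,dx=-\frac{\tau+1}{2\tau}\,I,\qquad \int_{\R^{n}\setminus\overline{\Omega}}|\nabla v^{-}_{\Omega}[\mu]|^{2}\,dx=\frac{1-\tau}{2\tau}\,I.
\]

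For $\tau\in\,]-1,1[\,\setminus\{0\}$ the two multiplicative factors on the right have strictly opposite signs (either $-\tfrac{\tau+1}{2\tau}>0>\tfrac{1-\tau}{2\tau}$ or vice versa), so the non-negativity of both Dirichlet energies forces $I=0$, and hence both energies vanish. Consequently $v^{\pm}_{\Omega}[\mu]$ is locally constant on each connected component of $\Omega$ and of $\R^{n}\setminus\overline{\Omega}$; in particular $\nu_{\Omega}\cdot\nabla v^{+}_{\Omega}[\mu]=\nu_{\Omega}\cdot\nabla v^{-}_{\Omega}[\mu]=0$ on $\partial\Omega$, and the standard jump relation $\mu=\nu_{\Omega}\cdot\nabla v^{-}_{\Omega}[\mu]-\nu_{\Omega}\cdot\nabla v^{+}_{\Omega}[\mu]$ yields $\mu=0$. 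The only delicate point in this argument is the sign bookkeeping in the energy identity, which is exactly what pins down the admissible range $]-1,1[$ for $\tau$; the use of $n\ge 3$ enters solely to ensure the decay needed to kill the boundary contribution at infinity.
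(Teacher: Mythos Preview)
Your argument is correct and is the standard energy method via the single layer potential. Note, however, that the paper does not actually supply its own proof of this theorem: part (i) is treated as classical (it is immediate from Theorem~\ref{Schaudercompact}, exactly as you say), and for part (ii) the paper simply refers to Dalla Riva and Mishuris \cite[Lem.~3.5]{DaMi15}. The proof in that reference follows essentially the same line you take---representing a putative kernel element as a single layer density, applying Green's identity inside and outside $\Omega$, and exploiting the opposite signs of the coefficients $\frac{\tau\pm 1}{2\tau}$ to force both Dirichlet energies to vanish---so your write-up is a faithful reconstruction of the intended argument rather than a genuinely different route.
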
  
In particular, for a proof of Theorem \ref{frediso}$(ii)$, we refer to Dalla Riva and Mishuris \cite[Lem. 3.5]{DaMi15}.

\section{Preliminary results}\label{preliminaryresults}

Let $\Omega^h$ be bounded open connected subset of $\R^n$ of class $C^{1,\alpha}$ with $\R^n \setminus \overline{\Omega^h}$ connected, $0 \in \Omega^h$ and $\overline{\Omega^h}\subseteq \Omega^o$. Here the superscript ``h" stands for ``hole". In the sequel we will exploit the inequality
\begin{equation}\label{Sn<0}
\int_{\partial\Omega^h} S_n\, d\sigma<0 
\end{equation}
which follows by the fact that $S_n(x)<0$ for all $x\in\mathbb{R}^n\setminus\{0\}$. Let us define
\begin{equation*}
\Omega \equiv \Omega^o \setminus \overline{\Omega^h}.
\end{equation*}
\begin{lemma}\label{isolem}
	Let $\rho \in \R \setminus\{0\}$. The map from $\COo \times \COh_0 \times \R$ to $\COharm$ which takes $(\mu^o,\mu^h,\xi)$ to the function
\[
		u[\mu^o,\mu^h,\xi] \equiv (w^+_{\Omega^o}[\mu^o] + w^-_{\Omega^h}[\mu^h] + \rho \xi \, {S_n})_{|\overline{\Omega}} 
\]
	is an isomorphism.
\end{lemma}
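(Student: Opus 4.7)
The plan is to verify linearity, continuity, and harmonicity directly from Theorem \ref{sdp}, and then to prove bijectivity by passing to boundary traces and combining a Fredholm-index calculation with a direct injectivity argument. Linearity is clear. Since $0\in\Omega^h$, the function $S_n$ is smooth and harmonic on $\overline\Omega$; combined with Theorem \ref{sdp} (v)--(vii) and the inclusions $\overline\Omega\subseteq\overline{\Omega^o}$ and $\overline\Omega\subseteq\overline{\R^n\setminus\Omega^h}$, this shows that $u[\mu^o,\mu^h,\xi]\in\COharm$ and that the map is continuous. Classical solvability of the interior Dirichlet problem in $\Omega$ identifies $\COharm$ with $\COo\times\COh$ via the boundary trace, so it suffices to show that the trace-level map
\begin{equation*}
T(\mu^o,\mu^h,\xi)\equiv\Bigl(\bigl(\tfrac12 I + W_{\partial\Omega^o}\bigr)[\mu^o] + w_{\Omega^h}[\mu^h]_{|\partial\Omega^o} + \rho\xi\, S_n|_{\partial\Omega^o},\ w_{\Omega^o}[\mu^o]_{|\partial\Omega^h} + \bigl(-\tfrac12 I + W_{\partial\Omega^h}\bigr)[\mu^h] + \rho\xi\, S_n|_{\partial\Omega^h}\Bigr)
\end{equation*}
is an isomorphism from $\COo\times\COh_0\times\R$ to $\COo\times\COh$, where the $\pm\tfrac12 I + W$ terms arise from the jump relations in Theorem \ref{sdp} (v).

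My approach to the isomorphism claim is to show $T$ is Fredholm of index $0$ and then injective. Write $T=\mathcal A+\mathcal K$, where $\mathcal K$ collects the two cross-boundary terms (compact, since their integral kernels are $C^\infty$ on the disjoint product $\partial\Omega^o\times\partial\Omega^h$) together with the rank-one map $\xi\mapsto\rho\xi\, S_n|_{\partial\Omega^o}$, and where $\mathcal A$ is block diagonal. The first block $\tfrac12 I + W_{\partial\Omega^o}$ is Fredholm of index $0$ by Theorem \ref{frediso} (i). For the second block $B(\mu^h,\xi)\equiv(-\tfrac12 I + W_{\partial\Omega^h})[\mu^h]+\rho\xi\, S_n|_{\partial\Omega^h}$, Theorem \ref{fredalt} (ii) tells us the kernel of $-\tfrac12 I + W_{\partial\Omega^h}$ is exactly the constants, so its restriction to $\COh_0$ is injective; shifting by a constant shows that this restriction has the same range as the unrestricted operator, namely a subspace of codimension $1$ in $\COh$. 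Hence the map $(\mu^h,\xi)\mapsto(-\tfrac12 I + W_{\partial\Omega^h})[\mu^h]$ on $\COh_0\times\R$ has one-dimensional kernel $\{0\}\times\R$ and codimension-one range, so it is Fredholm of index $0$, and the rank-one perturbation $\rho\xi\, S_n|_{\partial\Omega^h}$ preserves this index. Thus $T$ is Fredholm of index $0$.

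For injectivity, suppose $u[\mu^o,\mu^h,\xi]\equiv 0$ on $\overline\Omega$, and extend $v\equiv w_{\Omega^o}[\mu^o]+w_{\Omega^h}[\mu^h]+\rho\xi\, S_n$ to $\R^n\setminus(\partial\Omega^o\cup\partial\Omega^h\cup\{0\})$. By Theorem \ref{sdp} (vi) the normal derivative of $w_{\Omega^o}[\mu^o]$ is continuous across $\partial\Omega^o$, and the other two summands are smooth near $\partial\Omega^o$; combined with $v\equiv 0$ on $\Omega$ this forces $\nu_{\Omega^o}\cdot\nabla v \equiv 0$ on $\partial\Omega^o$ from the exterior. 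Since $v$ is harmonic in $\R^n\setminus\overline{\Omega^o}$ with decay $O(|x|^{2-n})$ at infinity (using $n\geq 3$), an energy argument yields $v\equiv 0$ in the exterior, and the double-layer jump then gives $\mu^o = 0$. A parallel analysis in $\Omega^h\setminus\{0\}$ produces $\nu_{\Omega^h}\cdot\nabla v^+|_{\partial\Omega^h}\equiv 0$; applying the divergence theorem on $\Omega^h\setminus\overline{B(0,r)}$ as $r\to 0^+$ equates the vanishing boundary flux with $\rho\xi\int_{\partial B(0,r)}\hat r\cdot\nabla S_n\,d\sigma\to\rho\xi$, forcing $\xi=0$. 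Then $v$ is harmonic throughout $\Omega^h$ with zero Neumann data, hence constant; the zero-mean condition on $\mu^h=v^+|_{\partial\Omega^h}$ (the constant) yields $\mu^h=0$. Fredholm index $0$ plus injectivity gives the isomorphism. The hard part will be the Fredholm bookkeeping for the block $B$, where the extra $\R$-factor in the source must precisely balance the codimension-one restriction to $\COh_0$ in order to land on index $0$.
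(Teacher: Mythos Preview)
Your proof is correct, but both the Fredholm step and the injectivity step are organized differently from the paper's argument.

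\textbf{Fredholm index.} The paper splits the trace operator as $L=\hat L+\tilde L$ where $\hat L(\mu^o,\mu^h,\xi)=\bigl(\tfrac12\mu^o,\,-\tfrac12\mu^h+\rho\xi\,S_n|_{\partial\Omega^h}\bigr)$ and \emph{everything else}, including the boundary operators $W_{\partial\Omega^o}$ and $W_{\partial\Omega^h}$, goes into the compact part $\tilde L$ (using Theorem \ref{Schaudercompact}). Then $\hat L$ is inverted by hand using \eqref{Sn<0}, so no index bookkeeping is needed. Your choice to keep $W_{\partial\Omega^o}$ and $W_{\partial\Omega^h}$ in the principal part $\mathcal A$ works too, but it forces the kernel/cokernel analysis of the block $B$ that you flag as ``the hard part''; the paper's route avoids this entirely.

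\textbf{Injectivity.} The paper never leaves $\overline\Omega$: it computes the flux of the candidate solution across $\partial\Omega^h$ from the $\Omega$ side, obtains $\xi=0$ from \eqref{isoleminjeq2}--\eqref{isoleminjeq4}, and then packages $(\mu^o,-\mu^h)$ as a single density on $\partial\Omega=\partial\Omega^o\cup\partial\Omega^h$ to which Theorem \ref{fredalt}(i) applies directly. You instead extend $v$ to $\R^n\setminus(\partial\Omega^o\cup\partial\Omega^h\cup\{0\})$ and run PDE arguments: an exterior energy estimate to kill $\mu^o$, a flux computation near the singularity of $S_n$ to kill $\xi$, and interior Neumann uniqueness in $\Omega^h$ to kill $\mu^h$. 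Both are sound; your argument is more self-contained (it does not use Theorem \ref{fredalt}(i) on the composite boundary $\partial\Omega$), while the paper's is shorter and stays within the integral-equation framework. One minor point of presentation: when you write ``$\mu^h=v^+|_{\partial\Omega^h}$'' you are implicitly using $v^-|_{\partial\Omega^h}=0$ together with the Dirichlet jump $v^+-v^-=\mu^h$; it would be clearer to say this.
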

\begin{proof}
	The map from $\COo \times \COh$ to $\COharm$ which takes a pair $(\phi^o,\phi^i)$ to the unique solution $u[\phi^o,\phi^h]$ of the Dirichlet problem with boundary data $\phi^o$ and $\phi^h$ on $\partial\Omega^o$ and $\partial\Omega^h$, respectively, is well known to be an isomorphism. Then we consider the operator $L=(L_1,L_2)$ from $\COo \times \COh_0 \times \R$ to $\COo \times \COh$ which takes $(\mu^o,\mu^h,\xi)$ to 
	\begin{equation*}
		\begin{split}
			& L_1[\mu^o,\mu^h,\xi] \equiv \left(\frac{1}{2}I + W_{\partial\Omega^o}\right) [\mu^o] + w^-_{\Omega^h}[\mu^h]_{|\partial \Omega^o} + \rho\, \xi\, {S_n}_{| \partial\Omega^o},
			\\
			& L_2[\mu^o,\mu^h,\xi] \equiv \left(-\frac{1}{2}I + W_{\partial\Omega^h}\right) [\mu^h] + w^+_{\Omega^o}[\mu^o]_{|\partial \Omega^h} + \rho\, \xi\, {S_n}_{| \partial\Omega^h}\,.
		\end{split}
	\end{equation*}
	We observe that we can rewrite $L$ as $L=\hat{L} + \tilde{L}$ where $\hat{L}$ and $\tilde{L}$ are the operators from $\COo \times \COh_0 \times \R$ to $\COo \times \COh$ defined by
	\begin{equation*}
		\begin{split}
			& \hat{L}[\mu^o,\mu^h,\xi] \equiv \left( \frac{1}{2} \mu^o , -\frac{1}{2} \mu^h + \rho\,  \xi \, {S_n}_{| \partial\Omega^h}\right), 
			\\
			& \tilde{L}[\mu^o,\mu^h,\xi] \equiv \left(W_{\partial\Omega^o} [\mu^o] + w^-_{\Omega^h}[\mu^h]_{|\partial \Omega^o} + \rho\, \xi \, {S_n}_{| \partial\Omega^o} , W_{\partial\Omega^h} [\mu^h] + w^+_{\Omega^o}[\mu^o]_{|\partial \Omega^h} \right).
		\end{split}
	\end{equation*}
	We observe that, for all $(\phi^o,\phi^h)\in\COo \times \COh$, we have $\hat{L}[\mu^o,\mu^h,\xi]=(\phi^o,\phi^h)$ if and only if 
	\[
	\mu^o=2\phi^o\,,\quad \xi = \frac{\int_{\partial\Omega^h} \phi^h \,d\sigma}{\rho \int_{\partial\Omega^h} {S_n} \,d\sigma }\,,\quad\mu^h = -2 f^h +2 \frac{{S_n}_{| \partial\Omega^h}}{ \int_{\partial\Omega^h} {S_n} \,d\sigma }{\int_{\partial\Omega^h} \phi^h \,d\sigma}
	\]	
	(cf.~\eqref{Sn<0}). Hence, one can exhibit a bounded inverse $\hat{L}^{(-1)}$ of $\hat{L}$ from  $\COo \times \COh$  to $\COo \times \COh_0 \times \R$ and as a consequence one deduces that $\hat{L}$ is an isomorphism. Next we observe that $\tilde{L}$ is compact.  In fact, by Theorem \ref{Schaudercompact}, the map which takes $\mu^o$ to $W_{\partial\Omega^o}[\mu^o]$ is compact from $\COo$ to itself and the map which takes $\mu^h$ to $W_{\partial\Omega^h}[\mu^h]$ is compact from $\COh_0$ to $\COh$. Moreover the map which takes $\mu^h$ to $w^-_{\Omega^h}[\mu^h]_{|\partial\Omega^o}$ is compact from $\COh_0$ to $\COo$ and the map which takes $\mu^o$ to $w^+_{\Omega^o}[\mu^o]_{|\partial \Omega^h}$ is compact from $\COo$ to $\COh$, because $\Omega^h \subset \Omega^0$ and the integrals involved display no singularities. Finally the map which takes $\xi$ to $\rho {S_n}_{| \partial\Omega^o} \xi$ is compact from $\R$ into $\COo$, because it has a finite dimensional range. So $L=\hat{L} + \tilde{L}$ is a compact perturbation of an isomorphism and henceforth a Fredholm operator of index $0$. Accordingly, in order to prove that $L$ is an isomorphism, it suffices to show that it is injective. Thus we assume that $L[\mu^o,\mu^h,\xi]=0$ and we prove that $(\mu^o,\mu^h,\xi)=(0,0,0)$. If $L[\mu^o,\mu^h,\xi]=0$, then by the jump formulas of Theorem \ref{sdp}$(v)$ and by the uniqueness of the solution of the Dirichlet problem in $\Omega$ we have
	\begin{equation}\label{isoleminjeq1}
		(w^+_{\Omega^o}[\mu^o] + w^-_{\Omega^h}[\mu^h] + \rho\,\xi\, {S_n})_{| \overline{\Omega}}  = 0.
	\end{equation}
	Hence
	\begin{equation}\label{isoleminjeq2}
		\int_{\partial\Omega^h}{\nu_{\Omega^h} \cdot \nabla (w^+_{\Omega^o}[\mu^o] + w^-_{\Omega^h}[\mu^h] + \rho\,\xi\, S_n )  \,d\sigma} = 0.
	\end{equation}
	By a standard argument based on the divergence theorem, one shows that 
	\begin{equation}
		\int_{\partial\Omega^h}{\nu_{\Omega^h} \cdot \nabla w^+_{\Omega^o}[\mu^o] \,d\sigma}=0
	\end{equation}
	and by jump relation of Theorem \ref{sdp}$(vi)$ we get
	\begin{equation}\label{isoleminjeq3}
		\int_{\partial\Omega^h}{\nu_{\Omega^h} \cdot \nabla w^-_{\Omega^h}[\mu^h] \,d\sigma} = \int_{\partial\Omega^h}{\nu_{\Omega^h} \cdot \nabla w^+_{\Omega^h}[\mu^h] \,d\sigma} = 0.
	\end{equation}
	Finally, by the definition of the double layer potential and by Theorem \ref{sdp}$(viii)$, we have
	\begin{equation}\label{isoleminjeq4}
		\int_{\partial\Omega^h}{\nu_{\Omega^h} \cdot \nabla (\rho S_n \xi )  \,d\sigma} = \rho\, \xi 	\int_{\partial\Omega^h}{\nu_{\Omega^h} \cdot \nabla S_n   \,d\sigma} = \rho\, \xi\, w^+_{\Omega^h}[1](0) = \rho\, \xi.
	\end{equation}
	Hence by \eqref{isoleminjeq2}-\eqref{isoleminjeq4}, we deduce that $\xi=0$. Then by \eqref{isoleminjeq1} we have 
	\begin{equation}\label{isoleminjeq5}
	(w^+_{\Omega^o}[\mu^o] + w^-_{\Omega^h}[\mu^h])_{| \overline{\Omega}} =0.
	\end{equation} 
	Now we consider the function $\mu \in \CO$ defined by
	\begin{equation*}
		\mu (x) \equiv
		\begin{cases}
			\mu^o(x) & \mbox{if } x \in \partial\Omega^o,
			\\
			- \mu^h(x) & \mbox{if } x \in \partial\Omega^h.
		\end{cases}
	\end{equation*}
	Equality  \eqref{isoleminjeq5}, the jump relations of Theorem \ref{sdp}$(v)$, and the fact that
	\begin{equation*}
		\nu{_{\Omega}}(x)= -\nu_{\Omega^h}(x) \qquad\forall x \in \partial\Omega^h,
	\end{equation*}
	imply that  $\left(\frac{1}{2} I + W_{\partial\Omega}\right) [\mu] = 0$. Then, by Theorem \ref{fredalt}$(i)$, we obtain that $\mu^o=0$ on $\partial\Omega^o$ and $\mu^h$ is constant on $\partial\Omega^h$. Since $\mu^h \in \COh_0$, it follows that $\mu^h=0$ and we conclude that $(\mu^o,\mu^h,\xi)=(0,0,0)$. Hence $L$ is  injective and our proof is complete.
\end{proof}

\begin{lemma}\label{isolem2}
	The map from $\COi_0 \times \R$ to $\COi$ which takes $(\mu,\xi)$ to the function
	\begin{equation*}
		J[\mu,\xi] \equiv \left(-\frac{1}{2}I + W_{\partial\Omega^i}\right) [\mu] +\xi\,{S_n}_{|\partial \Omega^i} 
	\end{equation*}
	is an isomorphism.
\end{lemma}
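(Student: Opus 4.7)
The plan is to follow the strategy of Lemma \ref{isolem}: exhibit $J$ as a compact perturbation of an isomorphism (hence Fredholm of index $0$) and then verify injectivity by hand.

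First I would decompose $J=\hat{J}+\tilde{J}$ where
\[
\hat{J}[\mu,\xi]\equiv -\frac{1}{2}\mu+\xi\,{S_n}_{|\partial\Omega^i}\qquad\text{and}\qquad \tilde{J}[\mu,\xi]\equiv W_{\partial\Omega^i}[\mu].
\]
To invert $\hat{J}$, given $\phi\in\COi$ I integrate the equation $-\frac{1}{2}\mu+\xi\,{S_n}_{|\partial\Omega^i}=\phi$ over $\partial\Omega^i$: since $\mu\in\COi_0$ the first term drops out, and by \eqref{Sn<0} the scalar $\int_{\partial\Omega^i}S_n\,d\sigma$ is nonzero, so $\xi$ is determined uniquely, and then $\mu=-2(\phi-\xi\,{S_n}_{|\partial\Omega^i})$ lies in $\COi_0$ by construction. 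Hence $\hat{J}$ is an isomorphism from $\COi_0\times\R$ to $\COi$. By Theorem \ref{Schaudercompact} the operator $\tilde{J}$ is compact, so $J=\hat{J}+\tilde{J}$ is Fredholm of index $0$.

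To conclude it is enough to establish injectivity. Suppose $J[\mu,\xi]=0$. By the exterior jump formula of Theorem \ref{sdp}(v) this reads $w^-_{\Omega^i}[\mu]+\xi\,S_n=0$ on $\partial\Omega^i$. The function $w^-_{\Omega^i}[\mu]+\xi\,S_n$ is harmonic in $\R^n\setminus\overline{\Omega^i}$ and harmonic at infinity (so it decays to $0$, since $n\ge 3$), hence by uniqueness for the exterior Dirichlet problem it vanishes identically on $\R^n\setminus\overline{\Omega^i}$; in particular its outward normal derivative on $\partial\Omega^i$ from the exterior is zero. Integrating this normal derivative on $\partial\Omega^i$ and splitting into the two summands, the contribution of $w^-_{\Omega^i}[\mu]$ equals that of $w^+_{\Omega^i}[\mu]$ (by Theorem \ref{sdp}(vi)), which integrates to zero by the divergence theorem since $w^+_{\Omega^i}[\mu]$ is harmonic in $\Omega^i$; the contribution of $\xi\,S_n$ equals $\xi\,w^+_{\Omega^i}[1](0)=\xi$, by the same computation used in \eqref{isoleminjeq4} (which in turn follows from Theorem \ref{sdp}(viii) applied to $u\equiv 1$). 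This forces $\xi=0$, and then $\bigl(-\frac{1}{2}I+W_{\partial\Omega^i}\bigr)[\mu]=0$ together with Theorem \ref{fredalt}(ii) applied to the connected set $\Omega^i$ implies that $\mu$ is constant on $\partial\Omega^i$; the condition $\mu\in\COi_0$ then forces $\mu=0$.

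The one mild technical point is the flux identity $\int_{\partial\Omega^i}\nu_{\Omega^i}\cdot\nabla S_n\,d\sigma=1$, but this is precisely the identity already used in the proof of Lemma \ref{isolem}; apart from that, the whole argument is just a simpler version of Lemma \ref{isolem} with no outer boundary $\partial\Omega^o$ in the picture.
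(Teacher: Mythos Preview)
Your proof is correct and follows the paper's overall strategy almost verbatim: the same decomposition $J=\hat J+\tilde J$, the same inversion of $\hat J$ via \eqref{Sn<0}, the same appeal to Theorem~\ref{Schaudercompact} for compactness, and the same final step using Theorem~\ref{fredalt}(ii) together with $\mu\in\COi_0$.

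The one genuine difference is how you extract $\xi=0$ in the injectivity step. After establishing (via uniqueness for the exterior Dirichlet problem) that $w^-_{\Omega^i}[\mu]+\xi\,S_n\equiv 0$ on $\R^n\setminus\Omega^i$, the paper simply multiplies by $(n-2)s_n|x|^{n-2}$ and lets $|x|\to\infty$: the double layer term contributes $0$ while the $S_n$ term contributes $\xi$, so $\xi=0$. You instead take the exterior normal derivative, integrate over $\partial\Omega^i$, and invoke Theorem~\ref{sdp}(vi) plus the flux identity from \eqref{isoleminjeq4} to reach the same conclusion. Your route is a faithful replay of the machinery of Lemma~\ref{isolem}; the paper's asymptotic argument is shorter and more direct here because $S_n$ and $w^-_{\Omega^i}[\mu]$ have visibly different decay rates at infinity, which is the natural thing to exploit once you know the identity holds on the whole exterior.
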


\begin{proof}
	 We write 
	\begin{equation*}
		J[\mu,\xi] = \left(-\frac{1}{2} \mu + \xi\, {S_n}_{|\partial \Omega^i} \right)+W_{\partial\Omega^i}[\mu].
	\end{equation*}
	Then we observe that the map which takes $(\mu,\xi)\in\COi_0 \times \R$ to $-\frac{1}{2} \mu + \xi \, {S_n}_{|\partial \Omega^i}  \in\COi$ is an isomorphism with inverse given by 
	\[
	h\mapsto \left(-2 \left(h-
\frac{\int_{\partial\Omega^i} h \,d\sigma}{\int_{\partial\Omega^i} S_n \,d\sigma} {S_n}_{|\partial \Omega^i} \right) \, , \,  \frac{\int_{\partial\Omega^i} h \,d\sigma}{\int_{\partial\Omega^i} S_n \,d\sigma}\right)
\]
(cf.~\eqref{Sn<0}).
 Moreover, $W_{\partial\Omega^i}$ is compact from $\COi_0$ to $\COi$ by Theorem \ref{Schaudercompact}. Hence, $J$ is a compact perturbation of an isomorphism and therefore a Fredholm operator of index $0$. Accordingly, to prove that it is an isomorphism it suffices to show that it is injective.  Let $(\mu,\xi) \in \COi_0 \times \R$ be such that 
	\begin{equation}\label{isolem2.eq2}
		J[\mu,\xi] = \left(-\frac{1}{2}I + W_{\partial\Omega^i}\right) [\mu] +\xi\, {S_n}_{|\partial \Omega^i}  = 0.
	\end{equation}
	Then  by Theorem \ref{sdp} $(v)$ we have that $w^-_{\Omega^i}[\mu]_{|\partial \Omega^i} = - \xi\, {S_n}_{|\partial \Omega^i}$ and, by the uniqueness of the solution of the exterior Dirichlet  problem in $\Omega^{i-}$ (note that  $S_n$ and $w^-_{\Omega^i}[\mu]$ are both harmonic at infinity), we deduce that
	\begin{equation}\label{w-=-xi Sn}
	w^-_{\Omega^i}[\mu](x) = -\xi\, {S_n} (x) \quad \forall x \in \overline{\Omega^{i-}}.
	\end{equation}
	Then we observe that 
	\begin{equation*}
		\lim\limits_{|x| \rightarrow + \infty} (n-2)s_n|x|^{n-2}{w^-_{\Omega^i}[\mu](x)}=0, \quad \lim\limits_{|x| \rightarrow + \infty} (n-2)s_n|x|^{n-2}(-\xi\, S_n (x)) = -\xi 
	\end{equation*}
	(recall that here $n\ge 3$). Hence $\xi =0$ by \eqref{w-=-xi Sn}. Then, $\left(-\frac{1}{2}I + W_{\partial\Omega^i}\right) [\mu] = 0$ by \eqref{isolem2.eq2}. Finally, by Theorem \ref{fredalt}$(ii)$, and by the membership of  $\mu$ in $\COi_0$, we also have $\mu=0$. Hence $(\mu,\xi)=(0,0)$ and the proof is completed.
\end{proof}

\section{Formulation of the problem in terms of integral equations}\label{formulation}

In this section, we  introduce a formulation of problem \eqref{princeq} in terms of integral equations.  In the sequel  we denote by ${u}^o$  the unique solution in $C^{1,\alpha}(\overline{\Omega^o})$ of the interior Dirichlet problem in $\Omega^o$ with boundary datum $f^o$, namely
\[
\begin{cases}
\Delta {u}^o=0&\text{in }\Omega^o\,,\\
{u}^o=f^o&\text{on }\partial\Omega^o\,.
\end{cases}
\]
We indicate by $\partial_\epsilon F$ and $\partial_\zeta F$ the partial derivative of $F$ with respect to the first the last argument, respectively.
We shall exploit the following assumption:
\begin{equation}\label{zetaicond}
\begin{split}
&\text{There exists $\zeta^i \in \R$ such that $F(0,\cdot,\zeta^i) = {u}^o(0)$}\\
&\text{and $(\partial_\zeta F)(0,\cdot,\zeta^i)$ is constant and positive.}
\end{split}
\end{equation}

Then we have the following Proposition \ref{rappresentsol}, where we represent harmonic functions in $\overline{\Omega(\epsilon)}$ and $\overline{\epsilon \Omega^i}$ in terms of ${u}^o$, double layer potentials with appropriate densities, and a suitable restriction of the  fundamental solution $S_n$.

\begin{prop}\label{rappresentsol}
	Let $\epsilon \in ]0,\epsilon_0[$. The map $(U^o_\epsilon[\cdot,\cdot,\cdot,\cdot], U^i_\epsilon[\cdot,\cdot,\cdot,\cdot])$ from $\COo \times \COi_0 \times \R \times \COi$ to $C^{1,\alpha}_{\mathrm{harm}}(\overline{\Omega(\epsilon)}) \times \COeiharm$ which takes $(\phi^o,\phi^i,\zeta,\psi^i)$ to the pair of functions
	\begin{equation*}
	(U^o_\epsilon[\phi^o,\phi^i,\zeta,\psi^i],U^i_\epsilon[\phi^o,\phi^i,\zeta,\psi^i])
	\end{equation*}
	defined by
	\begin{equation}\label{rappsol}
		\begin{aligned}
			U^o_\epsilon[\phi^o,\phi^i,\zeta,\psi^i](x) &\equiv {u}^o(x) + \epsilon w^+_{\Omega^o}[\phi^o](x) + \epsilon w^-_{\epsilon\Omega^i}\left[\phi^i\left(\frac{\cdot}{\epsilon}\right)\right](x) 
			\\
			& \quad+ \epsilon^{n-1}\zeta\, S_n(x)  && \qquad \forall x \in \overline{\Omega(\epsilon)}, 
			\\
			U^i_\epsilon[\phi^o,\phi^i,\zeta,\psi^i](x) &\equiv \epsilon w^+_{\epsilon\Omega^i}\left[\psi^i\left(\frac{\cdot}{\epsilon}\right)\right](x) + \zeta^i && \qquad \forall x \in \epsilon \overline{\Omega^i},
		\end{aligned}
	\end{equation}
	is bijective.
\end{prop}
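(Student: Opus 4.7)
The key structural observation is that the map decouples: the density $\psi^i$ appears only in the second component $U^i_\epsilon$, while the triple $(\phi^o,\phi^i,\zeta)$ appears only in the first component $U^o_\epsilon$. So bijectivity reduces to two independent problems, one for each component.

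\textbf{Second component.} Fix $v^i\in\COeiharm$. I want to show that there exists a unique $\psi^i\in\COi$ with $\epsilon w^+_{\epsilon\Omega^i}[\psi^i(\cdot/\epsilon)]+\zeta^i=v^i$ on $\epsilon\overline{\Omega^i}$. Both sides are harmonic in $\epsilon\Omega^i$ and continuous up to the boundary, so by uniqueness of the interior Dirichlet problem this equation is equivalent to its boundary trace; by the jump formula of Theorem \ref{sdp}$(v)$ this trace reads
\[
\left(\tfrac{1}{2}I+W_{\epsilon\partial\Omega^i}\right)[\psi^i(\cdot/\epsilon)]=\tfrac{1}{\epsilon}(v^i-\zeta^i)_{|\epsilon\partial\Omega^i}\,.
\]
Theorem \ref{frediso}$(i)$ says that $\tfrac{1}{2}I+W_{\epsilon\partial\Omega^i}$ is Fredholm of index $0$ on $C^{1,\alpha}(\epsilon\partial\Omega^i)$, and Theorem \ref{fredalt}$(i)$ gives that its kernel is trivial because the exterior of $\epsilon\Omega^i$ is connected and unbounded. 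Hence the operator is an isomorphism and $\psi^i(\cdot/\epsilon)$, and therefore $\psi^i$, is uniquely determined.

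\textbf{First component.} Fix $v^o\in C^{1,\alpha}_{\mathrm{harm}}(\overline{\Omega(\epsilon)})$. Since ${u}^o$ is harmonic in $\Omega^o\supseteq\Omega(\epsilon)$, the function $v^o-{u}^o_{|\overline{\Omega(\epsilon)}}$ lies in $C^{1,\alpha}_{\mathrm{harm}}(\overline{\Omega(\epsilon)})$, and I need to represent it uniquely as $\epsilon w^+_{\Omega^o}[\phi^o]+\epsilon w^-_{\epsilon\Omega^i}[\phi^i(\cdot/\epsilon)]+\epsilon^{n-1}\zeta S_n$ with $(\phi^o,\phi^i,\zeta)\in\COo\times\COi_0\times\R$. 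This is exactly Lemma \ref{isolem} applied with $\Omega^h\equiv\epsilon\Omega^i$ and $\rho=1$, once the densities are rescaled: given the unique triple $(\mu^o,\mu^h,\xi)\in\COo\times C^{1,\alpha}(\epsilon\partial\Omega^i)_0\times\R$ provided by that lemma, one sets $\phi^o\equiv\mu^o/\epsilon$, $\phi^i(\cdot)\equiv\mu^h(\epsilon\,\cdot)/\epsilon$, and $\zeta\equiv\xi/\epsilon^{n-1}$. A direct change of variables on $\partial\Omega^i$ shows
\[
\int_{\partial\Omega^i}\phi^i\,d\sigma=\tfrac{1}{\epsilon^n}\int_{\epsilon\partial\Omega^i}\mu^h\,d\sigma\,,
\]
so the zero-average condition is preserved in both directions, and the correspondence between $(\phi^o,\phi^i,\zeta)$ and $(\mu^o,\mu^h,\xi)$ is a linear bijection between $\COo\times\COi_0\times\R$ and $\COo\times C^{1,\alpha}(\epsilon\partial\Omega^i)_0\times\R$.

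\textbf{Main obstacle.} No part of the argument is genuinely hard once Lemma \ref{isolem} is available; the only real bookkeeping is matching the scaling factors ($\epsilon$ in front of the double layers and $\epsilon^{n-1}$ in front of $S_n$) with the pullback of densities from $\epsilon\partial\Omega^i$ to $\partial\Omega^i$, and checking that the mean-zero constraint is preserved under this pullback. The decoupling of the two components makes the problem clean, and both pieces reduce either to Lemma \ref{isolem} or to a single isomorphism statement for $\tfrac{1}{2}I+W$.
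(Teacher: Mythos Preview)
Your proof is correct and follows essentially the same route as the paper: decouple the two components, reduce the $U^i_\epsilon$ part to the invertibility of $\tfrac{1}{2}I+W_{\epsilon\partial\Omega^i}$ via Theorems \ref{fredalt}$(i)$ and \ref{frediso}$(i)$, and reduce the $U^o_\epsilon$ part to Lemma \ref{isolem} with $\Omega^h=\epsilon\Omega^i$. The only cosmetic difference is that the paper first divides the $U^o_\epsilon$ equation by $\epsilon$ and then applies Lemma \ref{isolem} with $\rho=\epsilon^{n-2}$, so that the lemma's unknowns are directly $(\phi^o,\phi^i(\cdot/\epsilon),\zeta)$ with no post-hoc rescaling needed; your choice $\rho=1$ followed by the rescaling $(\mu^o,\mu^h,\xi)\mapsto(\mu^o/\epsilon,\mu^h(\epsilon\cdot)/\epsilon,\xi/\epsilon^{n-1})$ is equivalent and your verification of the mean-zero preservation is correct.
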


\begin{proof}
Let $\epsilon \in ]0,\epsilon_0[$ and $(v^o,v^i) \in \COoiharm \times \COeiharm$. We prove that there exists a unique quadruple $(\phi^o,\phi^i,\zeta,\psi^i) \in \COo \times \COi_0 \times \R \times \COi$ such that 
\begin{equation}\label{rappsol.eq1}
(U^o_\epsilon[\phi^o,\phi^i,\zeta,\psi^i],U^i_\epsilon[\phi^o,\phi^i,\zeta,\psi^i]) = (v^o,v^i)\,.
\end{equation}
Indeed, \eqref{rappsol.eq1} is equivalent to  
\begin{align} 
\label{rappsol.eq2}
& w^+_{\Omega^o}[\phi^o](x) + w^-_{\epsilon\Omega^i}\left[\phi^i\left(\frac{\cdot}{\epsilon}\right)\right](x) + \epsilon^{n-2} \zeta \, S_n(x)  = \frac{1}{\epsilon}(v^o(x) - {u}^o(x)) && \forall x \in \overline{\Omega(\epsilon)},
\\
\label{rappsol.eq3}
& w^+_{\epsilon\Omega^i}\left[\psi^i\left(\frac{\cdot}{\epsilon}\right)\right](x) = \frac{1}{\epsilon}(v^i(x) - \zeta^i) &&\forall x \in \epsilon \overline{\Omega^i}.
\end{align}
Since $\frac{1}{\epsilon}(v^o - {u}^o) \in \COoiharm$, the existence and uniqueness of $(\phi^o,\phi^i,\xi) \in \COo \times \COi_0 \times \R$ which satisfies \eqref{rappsol.eq2} follow from Lemma \ref{isolem} (with $\rho=\epsilon^{n-2}$). By Theorem \ref{sdp}$(v)$ and by the uniqueness of the solution of the interior Dirichlet problem, equation \eqref{rappsol.eq3} is equivalent to 
$\left(\frac{1}{2}I+W_{\epsilon\partial\Omega^i}\right)\left[\psi^i\left(\frac{\cdot}{\epsilon}\right) \right]=\frac{1}{\epsilon}(v^i - \zeta^i)_{|\epsilon\partial\Omega^i}$. By Theorems \ref{fredalt} and \ref{frediso} one verifies that $\frac{1}{2}I+W_{\epsilon\partial\Omega^i}$ is an isomorphism from  $C^{1,\alpha}(\epsilon\partial\Omega^i)$ to itself. Hence there exists a unique $\psi^i \in \COi$  solution   of \eqref{rappsol.eq3}.
\end{proof}

In the following lemma we consider the Taylor expansion of the function $F$. In addition, we assume that:
\begin{equation}\label{F1}
\begin{split}
&\mbox{For all $t\in\partial\Omega^i$ fixed, the map from $]-\epsilon_0,\epsilon_0[ \times \R$ to $\R$}
\\
&\mbox{which takes  $(\epsilon,\zeta)$  to  $F(\epsilon,t,\zeta)$ is of class $C^2$.}  
\end{split}
\end{equation} 

\begin{lemma}\label{Taylem}
	Let \eqref{F1} hold true. Let $a,b \in \R$. Then
	\begin{equation*}
		F(\epsilon,t,a+\epsilon b) = F(0,t,a) + \epsilon (\partial_\epsilon F) (0,t,a) + \epsilon b (\partial_\zeta F) (0,t,a) + \epsilon^2 \tilde{F}(\epsilon,t,a,b),
	\end{equation*}
	for all $(\epsilon,t) \in ]-\epsilon_0,\epsilon_0[ \times \partial \Omega^i$,
	where 
	\begin{equation}\label{Taylem.eq1}
		\begin{split}
			\tilde{F}(\epsilon,t,a,b) \equiv \int_{0}^{1}  (1-\tau) \{ (\partial^2_\epsilon F)(\tau\epsilon,t,a + \tau\epsilon b) & +  2b(\partial_\epsilon \partial_\zeta F)(\tau\epsilon,t,a + \tau\epsilon b) 
			\\
			& + b^2(\partial^2_\zeta F)(\tau\epsilon,t,a + \tau\epsilon b) \} \,d\tau .
		\end{split}	
	\end{equation}
\end{lemma}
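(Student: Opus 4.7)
The plan is to prove this by applying the classical Taylor formula with integral remainder (order 2) to a suitable auxiliary function of a single real variable. Specifically, fix $\epsilon \in ]-\epsilon_0, \epsilon_0[$, $t \in \partial \Omega^i$, and $a, b \in \R$, and define
\[
g(\tau) \equiv F(\tau \epsilon, t, a + \tau \epsilon b) \qquad \forall \tau \in [0,1]\,.
\]
Under assumption \eqref{F1}, the map $(\epsilon,\zeta) \mapsto F(\epsilon, t, \zeta)$ is of class $C^2$ on $]-\epsilon_0,\epsilon_0[ \times \R$, so $g$ is of class $C^2$ on a neighborhood of $[0,1]$ (provided $\tau \epsilon \in ]-\epsilon_0,\epsilon_0[$ for $\tau \in [0,1]$, which is automatic). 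Moreover $g(0) = F(0,t,a)$ and $g(1) = F(\epsilon, t, a + \epsilon b)$, which are precisely the endpoint values we wish to compare.

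Next I would compute $g'$ and $g''$ by the chain rule. This gives
\[
g'(\tau) = \epsilon (\partial_\epsilon F)(\tau\epsilon, t, a + \tau\epsilon b) + \epsilon b (\partial_\zeta F)(\tau \epsilon, t, a + \tau \epsilon b)\,,
\]
so in particular $g'(0) = \epsilon (\partial_\epsilon F)(0,t,a) + \epsilon b (\partial_\zeta F)(0,t,a)$, and
\[
g''(\tau) = \epsilon^2 \bigl\{ (\partial^2_\epsilon F)(\tau\epsilon,t, a+\tau\epsilon b) + 2b (\partial_\epsilon \partial_\zeta F)(\tau \epsilon, t, a + \tau \epsilon b) + b^2 (\partial^2_\zeta F)(\tau \epsilon, t, a + \tau \epsilon b) \bigr\}\,.
\]

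Finally I would invoke Taylor's formula with integral remainder at order two, namely
\[
g(1) = g(0) + g'(0) + \int_0^1 (1-\tau) g''(\tau)\, d\tau\,,
\]
and substitute the expressions for $g(0)$, $g'(0)$, and $g''(\tau)$ computed above. Factoring out $\epsilon^2$ from $g''$ identifies the remainder term with $\epsilon^2 \tilde F(\epsilon, t, a, b)$, where $\tilde F$ is exactly the integral in \eqref{Taylem.eq1}. There is no real obstacle here; the only thing to check is the applicability of the chain rule and the integral Taylor formula, both of which follow directly from \eqref{F1} together with the fact that the partial derivatives of order up to two appearing in $g''$ are continuous in $(\epsilon, \zeta)$ and hence integrable in $\tau$. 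The statement involves no dependence on the normal or on the geometry of $\Omega^i$, so the result holds pointwise in $t$ without further work.
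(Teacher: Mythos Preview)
Your proof is correct and follows essentially the same approach as the paper: the paper differentiates $\epsilon\mapsto F(\epsilon,t,a+\epsilon b)$ directly in $\epsilon$ and applies the second-order Taylor formula with integral remainder, which is exactly your argument after the change of variable $\epsilon'=\tau\epsilon$.
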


\begin{proof}
	It suffices to consider the following identities:
	\begin{equation*}
		d_\epsilon ( F (\epsilon,t,a+\epsilon b)) = \partial_\epsilon F (\epsilon,t,a+\epsilon b) + b \partial_\zeta F (\epsilon,t,a+\epsilon b)
	\end{equation*}
	and 
	\begin{equation}\label{Taylem.eq2}
	\begin{split}
	d^2_\epsilon ( F (\epsilon,t,a+\epsilon b) ) = (\partial^2_\epsilon F)(\epsilon,t,a + \epsilon b) &+ 2b(\partial_\epsilon \partial_\zeta F)(\epsilon,t,a + \epsilon b) \\
	&+ b^2 (\partial^2_\zeta F)(\epsilon,t,a + \epsilon b),
	\end{split}
	\end{equation}
	and to take the Taylor expansion of $F(\epsilon,t,a+\epsilon b)$ with respect to $\epsilon$ and with remainder in integral form.
\end{proof}

Moreover, under assumption \eqref{zetaicond}, we have the following technical Lemma \ref{tayuo}.

\begin{lemma}\label{tayuo}
Let \eqref{zetaicond} hold true. Then
\begin{equation}\label{tayuo.eq1}
u^o(\epsilon t)-F(0,t,\zeta^i)=\epsilon\, t\cdot\nabla u^o(0)+\epsilon^2\,\tilde{u}^o(\epsilon,t)\qquad\forall\epsilon\in]-\epsilon_0,\epsilon_0[\,,\ t\in\partial\Omega^i
\end{equation}
with 
\begin{equation}\label{tayuo.eq2}
\tilde{u}^o(\epsilon,t) \equiv \int_0^1(1-\tau)\sum_{i,j=1}^nt_i\,t_j\, (\partial_{x_i}\partial_{x_j} u^o)(\tau\epsilon t)\,d\tau\,.
\end{equation}
Moreover, the map from $]-\epsilon_0,\epsilon_0[$ to $\COi$ which takes $\epsilon$ to $\tilde{u}^o(\epsilon,\cdot)$ is real analytic.
\end{lemma}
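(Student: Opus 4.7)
The plan is in two parts: first I would derive the pointwise identity \eqref{tayuo.eq1}--\eqref{tayuo.eq2} by the standard one-variable Taylor theorem with integral remainder, and then upgrade to real analyticity by exploiting the fact that $u^o$ is harmonic, hence real analytic, in $\Omega^o$.

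For the pointwise identity, fix $t \in \partial\Omega^i$ and set $g_t(\epsilon) \equiv u^o(\epsilon t)$. By the definition of $\epsilon_0$, $\epsilon t \in \epsilon\overline{\Omega^i} \subseteq \Omega^o$ for all $\epsilon \in ]-\epsilon_0, \epsilon_0[$, and $u^o$ is $C^\infty$ in $\Omega^o$, so $g_t$ is smooth on $]-\epsilon_0, \epsilon_0[$. Taylor's formula with integral remainder at order 2 gives
\[
g_t(\epsilon) = g_t(0) + \epsilon\, g_t'(0) + \int_0^\epsilon (\epsilon - s)\, g_t''(s)\, ds;
\]
the change of variable $s = \tau\epsilon$ turns the remainder into $\epsilon^2 \int_0^1 (1-\tau)\, g_t''(\tau \epsilon)\, d\tau$, and the chain rule gives $g_t'(0) = t \cdot \nabla u^o(0)$ together with $g_t''(\tau\epsilon) = \sum_{i,j=1}^n t_i t_j (\partial_{x_i}\partial_{x_j} u^o)(\tau\epsilon t)$. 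Combining these with $u^o(0) = F(0, t, \zeta^i)$ coming from \eqref{zetaicond} yields \eqref{tayuo.eq1}--\eqref{tayuo.eq2}.

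For the real analyticity of $\epsilon \mapsto \tilde u^o(\epsilon, \cdot)$ from $]-\epsilon_0, \epsilon_0[$ to $\COi$, I would first establish analyticity of the auxiliary map $\Psi \colon \epsilon \mapsto u^o(\epsilon\, \cdot)_{|\partial\Omega^i}$ and then conclude. Fix $\epsilon_* \in ]-\epsilon_0, \epsilon_0[$; the set $\epsilon_* \overline{\Omega^i}$ is a compact subset of $\Omega^o$, and real analyticity of the harmonic function $u^o$ provides a radius $r > 0$ such that, for every $t \in \partial\Omega^i$, the Taylor series of $u^o$ at $\epsilon_* t$ converges on $B_n(\epsilon_* t, r)$. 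For $|\epsilon - \epsilon_*| < r/(\max_{t \in \partial\Omega^i}|t|)$ one then has
\[
u^o(\epsilon t) = \sum_{k=0}^\infty (\epsilon - \epsilon_*)^k\, q_k(t), \qquad q_k(t) \equiv \sum_{|\alpha|=k} \frac{t^\alpha}{\alpha!}\, (\partial^\alpha u^o)(\epsilon_* t).
\]
Cauchy-type estimates $\|\partial^\alpha u^o\|_{L^\infty(\epsilon_* \overline{\Omega^i})} \le C\, |\alpha|!\, r^{-|\alpha|}$ (valid because $u^o$ is harmonic on a neighborhood of $\epsilon_*\overline{\Omega^i}$), paired with bounds of the form $\|t^\alpha\|_{\COi} \le C\,(1+|\alpha|)^2 M^{|\alpha|}$ with $M \equiv \max_{t \in \partial\Omega^i}|t|$, yield convergence of $\sum_k \|q_k\|_{\COi}\, |\epsilon - \epsilon_*|^k$ for $|\epsilon - \epsilon_*|$ small, which is precisely real analyticity of $\Psi$ at $\epsilon_*$. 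Since, for $\epsilon \ne 0$,
\[
\tilde u^o(\epsilon,t) = \frac{u^o(\epsilon t) - u^o(0) - \epsilon\, t \cdot \nabla u^o(0)}{\epsilon^2},
\]
and the first two Taylor coefficients of $\Psi$ at $\epsilon = 0$ are exactly $u^o(0)$ and $t \cdot \nabla u^o(0)$, the map $\epsilon \mapsto \tilde u^o(\epsilon, \cdot)$ agrees on $]-\epsilon_0, \epsilon_0[ \setminus \{0\}$ with a $\COi$-valued real analytic function; together with the continuous extension at $\epsilon = 0$ provided directly by \eqref{tayuo.eq2}, a standard removable-singularity argument for Banach-valued analytic functions gives real analyticity on all of $]-\epsilon_0, \epsilon_0[$.

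The main technical obstacle is precisely the passage from pointwise convergence of Taylor series to convergence in the $C^{1,\alpha}$ norm on $\partial\Omega^i$: one has to control first derivatives and H\"older seminorms of the monomials $t^\alpha$ simultaneously and match them against sufficiently sharp Cauchy estimates on $\partial^\alpha u^o$ near $\epsilon_* \overline{\Omega^i}$. These bounds are classical but require some care with the combinatorial factors in $|\alpha|$.
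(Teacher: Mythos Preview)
Your derivation of the pointwise identity \eqref{tayuo.eq1}--\eqref{tayuo.eq2} is the same as the paper's: a one-variable Taylor expansion of $\epsilon\mapsto u^o(\epsilon t)$ with integral remainder, combined with $u^o(0)=F(0,t,\zeta^i)$ from \eqref{zetaicond}.

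For the real analyticity of $\epsilon\mapsto\tilde u^o(\epsilon,\cdot)$, however, you take a genuinely different route. The paper works directly with the integral formula \eqref{tayuo.eq2}: it observes that $\epsilon\mapsto\bigl((\tau,t)\mapsto\epsilon\tau t\bigr)$ is real analytic into $(C^{1,\alpha}([0,1]\times\overline{\Omega^i}))^n$, invokes Valent's composition-operator theorem to conclude that $(\tau,t)\mapsto(\partial_{x_i}\partial_{x_j}u^o)(\tau\epsilon t)$ depends analytically on $\epsilon$ in $C^{1,\alpha}([0,1]\times\overline{\Omega^i})$, and then uses bilinearity of the product and linearity of $\int_0^1\cdot\,d\tau$ and of the restriction to $\partial\Omega^i$. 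You instead first prove that $\Psi(\epsilon)=u^o(\epsilon\cdot)_{|\partial\Omega^i}$ is real analytic in $\COi$ by hand, via explicit Taylor series and Cauchy-type derivative bounds for the harmonic function $u^o$, and then recover $\tilde u^o(\epsilon,\cdot)=\epsilon^{-2}\bigl(\Psi(\epsilon)-\Psi(0)-\epsilon\Psi'(0)\bigr)$ through a division/removable-singularity argument at $\epsilon=0$. Your approach is more elementary and self-contained (no appeal to Valent), at the cost of the combinatorial estimates you flag at the end: one must control not only $\|\partial^\alpha u^o\|_{L^\infty}$ but also the $C^{1,\alpha}$ norm of $t\mapsto(\partial^\alpha u^o)(\epsilon_* t)$, and the multinomial sum $\sum_{|\alpha|=k}|\alpha|!/\alpha!=n^k$ forces a slightly smaller radius than $r/M$. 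The paper's route hides all of this inside a black-box composition result and handles the point $\epsilon=0$ without any separate argument, since the integral representation is already regular there.
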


\begin{proof} To prove \eqref{tayuo.eq1} and \eqref{tayuo.eq2} it suffices to take the Taylor expansion of $u^o(\epsilon t)$ with respect to $\epsilon$ and with remainder in integral form (see also \eqref{zetaicond}). Then we observe that the map from $]-\epsilon_0,\epsilon_0[$ to $(C^{1,\alpha}([0,1]\times\overline{\Omega^i}))^n$ which takes $\epsilon$ to the function $\epsilon\tau t$ of the variable $(\tau,t)$ is real analytic. Moreover we have $\epsilon\tau t\in\Omega^o$ for all $\epsilon\in]-\epsilon_0,\epsilon_0[$ and all $(\tau,t)\in[0,1]\times\overline{\Omega^i}$. Then, by the real analyticity of $\partial_{x_i}\partial_{x_j} u^o$ in $\Omega^o$ and by known results on composition operators (cf.~Valent \cite[Thm.~5.2, p.~44]{Va88}), one verifies that the map from
\begin{equation*}
\{h \in (C^{1,\alpha}([0,1]\times\overline{\Omega^i}))^n : h([0,1]\times\overline{\Omega^i}) \subset \Omega^o \}
\end{equation*}
to $C^{1,\alpha}([0,1]\times\overline{\Omega^i})$ which takes a function $h(\cdot,\cdot)$ to $\partial_{x_i}\partial_{x_j} u^o(h(\cdot,\cdot))$ is real analytic. Since the sum and the pointwise product in $C^{1,\alpha}([0,1]\times\overline{\Omega^i})$ are bilinear and continuous, the map from $]-\epsilon_0,\epsilon_0[$ to $C^{1,\alpha}([0,1]\times\overline{\Omega^i})$ which takes $\epsilon$ to the function
\[
(1-\tau)\sum_{i,j=1}^nt_i\,t_j\, (\partial_{x_i}\partial_{x_j} u^o)(\tau\epsilon t)
\]
of the variable $(\tau,t)$ is real analytic. Then, since the map from $C^{1,\alpha}([0,1]\times\overline{\Omega^i})$ to $C^{1,\alpha}(\overline{\Omega^i})$ which takes a function $g(\cdot,\cdot)$ to $\int_{0}^{1} g(\tau,\cdot) d\,\tau$ is linear and continuous and since the restriction operator is linear and continuous from $C^{1,\alpha}(\overline{\Omega^i})$ to $\COi$, 
 we conclude that the map from $]-\epsilon_0,\epsilon_0[$ to $\COi$ that takes $\epsilon$ to $\tilde{u}(\epsilon,\cdot)$
is real analytic. 
\end{proof}

We are now ready to provide a formulation of problem \eqref{princeq} in terms of integral equations.

\begin{prop}\label{equivsol}
Let assumptions  \eqref{zetaicond}  and \eqref{F1} hold true. Let $\epsilon \in ]0,\epsilon_0[$ and $(\phi^o,\phi^i,\zeta,\psi^i) \in \COo \times \COi_0 \times \R \times \COi$. Then the pair of functions
\[
(U^o_\epsilon[\phi^o,\phi^i,\zeta,\psi^i],U^i_\epsilon[\phi^o,\phi^i,\zeta,\psi^i])
\]
defined by \eqref{rappsol} is a solution of \eqref{princeq} if and only if 
\begin{align}
\label{equivsol3}
&\left(\frac{1}{2}I + W_{\partial\Omega^o}\right)[\phi^o](x) - \epsilon^{n-1} \int_{\partial\Omega^i}{\nu_{\Omega^i}(y) \cdot \nabla S_n(x-\epsilon y) \phi^i(y) \,d\sigma_y} \nonumber
\\& \qquad
+ \epsilon^{n-2} \zeta\, S_n(x)  = 0 && \forall x \in \partial\Omega^o\,, 
\\
\label{equivsol1}
& t \cdot \nabla {u}^o(0) + \epsilon \tilde{u}^o(\epsilon, t) + \left(-\frac{1}{2}I + W_{\partial\Omega^i}\right)[\phi^i](t) +\zeta\, S_n(t) + w^+_{\Omega^o}[\phi^o](\epsilon t) \nonumber
\\& \qquad 
= (\partial_\epsilon F) (0,t,\zeta^i) \nonumber + (\partial_\zeta F) (0,t,\zeta^i) \left(\frac{1}{2}I + W_{\partial\Omega^i}\right)[\psi^i](t) 
\\
& \qquad
+ \epsilon \tilde{F}\left(\epsilon,t,\zeta^i,\left(\frac{1}{2}I + W_{\partial\Omega^i}\right)[\psi^i](t)\right) &&\forall t \in \partial\Omega^i,
\\
\label{equivsol2}
& \nu_{\Omega^i}(t) \cdot \Big( \nabla {u}^o(\epsilon t) + \epsilon \nabla w^+_{\Omega^o}[\phi^o](\epsilon t) + \nabla w^-_{\Omega^i}[\phi^i](t) \nonumber
\\&
\qquad + \zeta\,\nabla S_n(t)  - \nabla w^+_{\Omega^i}[\psi^i](t) \Big)
\nonumber
\\
& \qquad = G \left(\epsilon,t,\epsilon\left(\frac{1}{2}I + W_{\partial\Omega^i}\right)[\psi^i](t) + \zeta^i \right)&& \forall t \in \partial\Omega^i\,.
\end{align}
\end{prop}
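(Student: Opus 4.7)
The plan is to observe that, by Proposition \ref{rappresentsol}, the pair $(U^o_\epsilon[\phi^o,\phi^i,\zeta,\psi^i], U^i_\epsilon[\phi^o,\phi^i,\zeta,\psi^i])$ is automatically harmonic in $\Omega(\epsilon)$ and $\epsilon\Omega^i$, so only the three boundary conditions of \eqref{princeq} require verification. Each condition will be shown to be equivalent to one of \eqref{equivsol3}, \eqref{equivsol1}, \eqref{equivsol2} after applying the appropriate jump formulas from Theorem \ref{sdp} and performing the change of variable $y=\epsilon\eta$ on $\epsilon\partial\Omega^i$. The two basic scaling identities I will use are the $(2-n)$-homogeneity of $S_n$, i.e.\ $S_n(\epsilon t)=\epsilon^{2-n}S_n(t)$ and $\nabla S_n(\epsilon t)=\epsilon^{1-n}\nabla S_n(t)$, together with the fact that $\nu_{\epsilon\Omega^i}(\epsilon t)=\nu_{\Omega^i}(t)$ and the rescaling of layer potentials which, after a change of variable, gives
\[
w^-_{\epsilon\Omega^i}[\phi^i(\cdot/\epsilon)](\epsilon t)=w^-_{\Omega^i}[\phi^i](t), \qquad \epsilon\,\nabla w^-_{\epsilon\Omega^i}[\phi^i(\cdot/\epsilon)](\epsilon t)=\nabla w^-_{\Omega^i}[\phi^i](t),
\]
and similarly for $w^+_{\epsilon\Omega^i}[\psi^i(\cdot/\epsilon)]$.

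For the condition $u^o=f^o$ on $\partial\Omega^o$, I substitute the expression for $U^o_\epsilon$, use $u^o=f^o$ on $\partial\Omega^o$, apply the jump relation of Theorem \ref{sdp}$(v)$ to $w^+_{\Omega^o}[\phi^o]$, rewrite $w^-_{\epsilon\Omega^i}[\phi^i(\cdot/\epsilon)]$ as an integral and change variables $y=\epsilon\eta$, and divide by $\epsilon$; this directly yields \eqref{equivsol3}. For the temperature interface condition on $\epsilon\partial\Omega^i$, I evaluate both sides at $x=\epsilon t$ with $t\in\partial\Omega^i$: on the left I use Lemma \ref{tayuo} to expand $u^o(\epsilon t)$ around $0$ (with $u^o(0)=F(0,\cdot,\zeta^i)$ from \eqref{zetaicond}), the jump formula for $w^-_{\epsilon\Omega^i}[\phi^i(\cdot/\epsilon)]$ combined with the rescaling identity above, and the homogeneity $\epsilon^{n-1}S_n(\epsilon t)=\epsilon S_n(t)$; on the right I use the jump formula to compute $U^i_\epsilon(\epsilon t)=\zeta^i+\epsilon\bigl(\tfrac{1}{2}I+W_{\partial\Omega^i}\bigr)[\psi^i](t)$ and then expand $F(\epsilon,t,U^i_\epsilon(\epsilon t))$ via Lemma \ref{Taylem} with $a=\zeta^i$ and $b=\bigl(\tfrac{1}{2}I+W_{\partial\Omega^i}\bigr)[\psi^i](t)$. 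The constant terms $F(0,t,\zeta^i)$ cancel, and after dividing by $\epsilon$ I obtain \eqref{equivsol1}.

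For the flux condition on $\epsilon\partial\Omega^i$ I compute $\nabla U^o_\epsilon(\epsilon t)$ and $\nabla U^i_\epsilon(\epsilon t)$ by the chain rule; the factors of $\epsilon$ in front of the rescaled layer potentials combine with the $1/\epsilon$ arising from the chain rule so that, together with $\epsilon^{n-1}\nabla S_n(\epsilon t)=\nabla S_n(t)$, I obtain
\[
\nabla U^o_\epsilon(\epsilon t)=\nabla u^o(\epsilon t)+\epsilon\nabla w^+_{\Omega^o}[\phi^o](\epsilon t)+\nabla w^-_{\Omega^i}[\phi^i](t)+\zeta\,\nabla S_n(t),
\]
\[
\nabla U^i_\epsilon(\epsilon t)=\nabla w^+_{\Omega^i}[\psi^i](t).
\]
At this point the only subtlety is that $\nabla w^-_{\Omega^i}[\phi^i](t)$ is understood as a boundary trace; this is harmless because Theorem \ref{sdp}$(vi)$ tells us that the normal component of the double-layer gradient is continuous across $\partial\Omega^i$. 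Finally, combining with $\nu_{\epsilon\Omega^i}(\epsilon t)=\nu_{\Omega^i}(t)$ and using the already-computed expression for $U^i_\epsilon(\epsilon t)$ inside $G$, I recover \eqref{equivsol2}.

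The argument is essentially bookkeeping, and the single point that could go wrong is the consistent tracking of powers of $\epsilon$ through the scaling $y=\epsilon\eta$ in the layer potentials and $\nabla S_n$; once the scaling identities above are in hand, each computation is reversible, which simultaneously gives the ``only if'' and ``if'' directions.
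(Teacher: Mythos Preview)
Your proposal is correct and follows essentially the same approach as the paper's own proof, which simply cites the definition \eqref{rappsol}, the jump formulas of Theorem \ref{sdp}$(v)$, the change of variable $x=\epsilon t$ on $\epsilon\partial\Omega^i$, Lemma \ref{Taylem} with $a=\zeta^i$ and $b=\bigl(\tfrac{1}{2}I+W_{\partial\Omega^i}\bigr)[\psi^i]$, and Lemma \ref{tayuo}. Your write-up is in fact a more detailed expansion of exactly these ingredients, with the scaling identities for $S_n$, $\nabla S_n$, and the rescaled layer potentials made explicit.
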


\begin{proof}
It can be deduced by definition \eqref{rappsol}, by the jump formulas of Theorem \ref{sdp}$(v)$, by changing the variable $x$ with $\epsilon t$ in the integral equations on $\epsilon\partial\Omega^i$ and in the integrals over $\epsilon\partial\Omega^i$,  by Lemma \ref{Taylem} with $a=\zeta^i$ and $b=\left(\frac{1}{2}I + W_{\partial\Omega^i}\right)[\psi^i](x)$, and by Lemma \ref{tayuo}.
\end{proof}

Incidentally we observe that, by integrating \eqref{equivsol2} over $\partial\Omega^i$, one shows that
\[
\zeta=\int_{\partial\Omega^i}G \left(\epsilon,t,\epsilon \left(\frac{1}{2}I + W_{\partial\Omega^i}\right)[\psi^i](t) + \zeta^i \right)\,d\sigma_t
\]
for all $\epsilon\in]0,\epsilon_0[$ (see also Theorem \ref{sdp}$(viii)$).


\section{Limiting system}\label{limitingsystem}
In this section we prove an existence and uniqueness theorem for the limiting system, i.e.~for the system of integral equations obtained by letting $\epsilon \rightarrow 0^+$ in \eqref{equivsol3}, \eqref{equivsol1} and \eqref{equivsol2}. It consists of the following three equations:

\begin{align}\label{limsys}
&\left(\frac{1}{2}I + W_{\partial\Omega^o}\right)[\phi^o](x) = 0 & \forall x \in \partial\Omega^o, \nonumber
\\
& t \cdot \nabla {u}^o(0) + \left(-\frac{1}{2}I +  W_{\partial\Omega^i}\right)[\phi^i](t)  +\zeta\, S_n(t)  +w^+_{\Omega^o}[\phi^o](0) \nonumber
\\ 
& \qquad\qquad =(\partial_\epsilon F) (0,t,\zeta^i) + (\partial_\zeta F) (0,t,\zeta^i)  \left(\frac{1}{2}I + W_{\partial\Omega^i}\right)[\psi^i](t) & \forall t \in \partial\Omega^i, \nonumber
\\
&\nu_{\Omega^i}(t) \cdot \left( \nabla {u}^o(0) + \nabla w^-_{\Omega^i}[\phi^i](t)+\zeta\, \nabla S_n(t)  - \nabla w^+_{\Omega^i}[\psi^i](t) \right) = G (0,t,\zeta^i) & \forall t \in \partial\Omega^i.
\end{align}

We begin with an existence and uniqueness result for an auxiliary exterior transmission problem.
\begin{lemma}\label{limsyslem}
Let $\lambda>0$. Let $f_1 \in \COi$ and $f_2 \in C^{0,\alpha}(\partial\Omega^i)$. Then there exists a unique solution $(u^-,u^+) \in C^{1,\alpha}(\R^n\setminus\Omega^i) \times C^{1,\alpha}(\overline{\Omega^i})$ of 
\begin{equation}\label{aux}
\begin{cases}
\Delta u^- = 0 & \mbox{in } \R^n \setminus \overline{\Omega^i}, \\
\Delta u^+ = 0 & \mbox{in } \Omega^i, \\
u^-=\lambda u^+ + f_1 & \mbox{on } \partial\Omega^i,  
\\
\nu_{\Omega^i} \cdot \nabla u^- - \nu_{\Omega^i} \cdot \nabla u^+ = f_2 & \mbox{on } \partial\Omega^i,  
\\
\lim\limits_{x \rightarrow \infty} u^-(x) = 0.
\end{cases}
\end{equation}
\end{lemma}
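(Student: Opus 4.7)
The plan is to reduce the transmission problem \eqref{aux} to a single scalar integral equation on $\partial\Omega^i$ via classical single layer representations. Specifically, I would look for solutions of the form $u^+=v^+_{\Omega^i}[\mu^+]$ and $u^-=v^-_{\Omega^i}[\mu^-]$ with unknown densities $\mu^\pm\in C^{0,\alpha}(\partial\Omega^i)$. By Theorem \ref{sdp}$(i),(ii)$ both functions have the required harmonicity and regularity in their respective domains, and since $n\ge 3$ the single layer decays like $|x|^{2-n}$ at infinity, so the condition $\lim_{x\to\infty}u^-(x)=0$ is automatic. Using the continuity of the single layer across $\partial\Omega^i$ together with the jump formulas of Theorem \ref{sdp}$(iii)$, the two interface conditions of \eqref{aux} translate on $\partial\Omega^i$ into the system
\begin{align*}
V_{\Omega^i}[\mu^-]-\lambda V_{\Omega^i}[\mu^+]&=f_1\,,\\
\Big(\tfrac{1}{2}I+W^\ast_{\partial\Omega^i}\Big)[\mu^-]-\Big(-\tfrac{1}{2}I+W^\ast_{\partial\Omega^i}\Big)[\mu^+]&=f_2\,.
\end{align*}

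Next I would exploit that $V_{\Omega^i}$ is an isomorphism from $C^{0,\alpha}(\partial\Omega^i)$ to $\COi$ (Theorem \ref{sdp}$(iv)$) to solve the first equation explicitly as $\mu^-=\lambda\mu^++V_{\Omega^i}^{(-1)}[f_1]$. Substituting into the second equation and collecting terms yields, after dividing by $\lambda+1>0$, the canonical form
\[
\Big(\tfrac{1}{2}I+\tau\,W^\ast_{\partial\Omega^i}\Big)[\mu^+]=g\,,\qquad \tau\equiv\frac{\lambda-1}{\lambda+1}\,,
\]
for a suitable $g\in C^{0,\alpha}(\partial\Omega^i)$ determined explicitly by $f_1$ and $f_2$. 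The decisive observation, and the place where the hypothesis $\lambda>0$ enters, is that $\tau\in\,]-1,1[\,$ precisely when $\lambda>0$. Theorem \ref{frediso}$(ii)$ then implies that $\tfrac{1}{2}I+\tau W^\ast_{\partial\Omega^i}$ is an isomorphism of $C^{0,\alpha}(\partial\Omega^i)$, so $\mu^+$ is uniquely determined and then so is $\mu^-$. The pair $(v^-_{\Omega^i}[\mu^-],v^+_{\Omega^i}[\mu^+])$ then solves \eqref{aux}, settling existence.

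For uniqueness at the level of $(u^-,u^+)$ itself, I would rely on a standard energy argument. Assuming $f_1=f_2=0$, applying Green's identity separately in $\Omega^i$ and in $\R^n\setminus\overline{\Omega^i}$ (the boundary integral at infinity vanishing thanks to the decay $u^-=O(|x|^{2-n})$, $\nabla u^-=O(|x|^{1-n})$ valid for $n\ge 3$), and using the interface conditions $u^-=\lambda u^+$ and $\nu_{\Omega^i}\cdot\nabla u^-=\nu_{\Omega^i}\cdot\nabla u^+$ on $\partial\Omega^i$, I would obtain
\[
\lambda\int_{\Omega^i}|\nabla u^+|^2\,dx+\int_{\R^n\setminus\overline{\Omega^i}}|\nabla u^-|^2\,dx=0\,,
\]
whence, since $\lambda>0$, both $u^\pm$ are locally constant; the decay at infinity forces $u^-\equiv 0$ and then $u^+\equiv 0$ by the first interface condition. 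The only real obstacle I foresee is the algebraic bookkeeping needed to arrive at the canonical form with $\tau\in\,]-1,1[\,$; once that is achieved, Theorem \ref{frediso}$(ii)$ does all the heavy lifting.
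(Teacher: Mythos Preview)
Your proposal is correct and follows essentially the same approach as the paper: an energy identity for uniqueness, and single layer representations leading to an equation of the form $\bigl(\tfrac{1}{2}I+\tfrac{\lambda-1}{\lambda+1}W^\ast_{\partial\Omega^i}\bigr)[\cdot]=g$ for existence, resolved via Theorem~\ref{frediso}$(ii)$. The only cosmetic difference is the choice of ansatz: the paper writes $u^-=\lambda\,v^-_{\Omega^i}[\phi]$ and $u^+=v^+_{\Omega^i}[\phi+\mu]$, which makes the first interface condition yield $\mu$ directly without inverting $V_{\Omega^i}$, whereas you invoke $V_{\Omega^i}^{(-1)}$ explicitly; the resulting scalar equation and the use of $\tau=\tfrac{\lambda-1}{\lambda+1}\in\,]-1,1[$ are identical.
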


\begin{proof}
	We first prove the uniqueness. By linearity it suffices to show that the only solution with $f_1=f_2=0$ is $(u^-,u^+)=(0,0)$. If $f_1=f_2=0$, then by the divergence theorem and by the harmonicity at infinity of $u^-$, we compute
\begin{equation*}
\begin{split}
0 \leq \int_{\Omega^i} |\nabla u^+|^2 \,dx = - \int_{\Omega^i} u^+ \Delta u^+ \,dx +  \int_{\partial \Omega^i} u^+ \, \nu_{\Omega^i} \cdot \nabla u^+ \,d\sigma 
\\ = \int_{\partial \Omega^i} \frac{1}{\lambda}u^- \, \nu_{\Omega^i} \cdot \nabla u^- \,d\sigma = - \frac{1}{\lambda}\int_{\R^n \setminus \overline{\Omega^i}} |\nabla u^-|^2 \,dx \leq 0.
\end{split}	
\end{equation*}
It follows that $u^-$ and $u^+$ are constant functions (note that $\lambda>0$), hence the fifth condition of \eqref{aux} implies that $u^-=0$ and, in turn, the third condition (with $f_1=0$) implies that $u^+=0$. 

Now we show that the solution of \eqref{aux} exists for any $f_1 \in \COi$ and $f_2 \in C^{0,\alpha}(\partial\Omega^i)$ fixed. 	To do so,  we prove that there exists a (unique) pair $(\phi,\mu) \in C^{0,\alpha}(\partial\Omega^i) \times C^{0,\alpha}(\partial\Omega^i)$ such that the pair of functions $(u^-,u^+) \in C^{1,\alpha}_{\mathrm{harm}}(\R^n\setminus\Omega^i) \times \COiharm$ defined by 
\begin{equation*}
\begin{aligned}
& u^- \equiv \lambda v^-_{\Omega^i}[\phi]  &&\qquad \text{on } \R^n \setminus \Omega^i,
\\
& u^+ \equiv v^+_{\Omega^i}[\phi + \mu]  &&\qquad \text{on } \overline{\Omega^i},
\end{aligned}
\end{equation*}
is a solution of \eqref{aux}. Indeed, since $v^+_{\Omega^i}[\phi]=v^-_{\Omega^i}[\phi]$ on $\partial\Omega^i$, the third equation of \eqref{aux} is equivalent to 
\begin{equation*}
	V_{\partial\Omega^i}[\mu]  = \frac{1}{\lambda} f_1 \qquad \text{on } \partial \Omega^i,
\end{equation*}
and  then the existence (and uniqueness) of $\mu \in C^{0,\alpha}(\partial\Omega^i)$ follows by Theorem \ref{sdp}$(iv)$. Moreover, by the jump relations for the normal derivative of the single layer potential (see Theorem \ref{sdp}$(iii)$), we deduce that the  fourth equation of \eqref{aux} is equivalent to 
\begin{equation*}
	\lambda \left(\frac{1}{2}I + W^\ast_{\partial\Omega}\right)[\phi] - \left(-\frac{1}{2}I + W^\ast_{\partial\Omega}\right)[\phi+\mu] = f_2 \qquad \text{on } \partial \Omega^i.
\end{equation*}
By straightforward computation we obtain
\begin{equation*}
\left(\frac{1}{2}I + \frac{\lambda-1}{\lambda+1} \, W^\ast_{\partial\Omega}\right)[\phi] = \frac{1}{\lambda+1} \left( f_2 + \left(-\frac{1}{2}I + W^\ast_{\partial\Omega}\right)[\mu] \right)  \qquad \text{on } \partial \Omega^i,
\end{equation*}
and the existence (and uniqueness) of $\phi\in C^{0,\alpha}(\partial\Omega^i)$ comes from Theorem \ref{frediso}$(ii)$ (note that $\left|\frac{\lambda-1}{\lambda+1}\right| < 1$). To complete the proof, we observe that $\lim_{x\to\infty}v^-_{\Omega^i}[\phi](x)=0$ and thus $u^-$ satisfies also the last equation of \eqref{aux}. 
\end{proof}

We now get back to the analysis of \eqref{limsys}. 

\begin{teo}\label{limsysthm}
Let assumptions  \eqref{zetaicond} and \eqref{F1} hold true. Then, the quadruple $(\phi^o_0,\phi^i_0,\zeta_0,\psi^i_0) \in \COo \times \COi_0 \times \R \times \COi$ is a solution of \eqref{limsys} if and only if 
\[
\phi^o_0=0
\]
and the pair of functions $(v^-,v^+) \in C^{1,\alpha}(\R^n\setminus\Omega^i) \times C^{1,\alpha}(\overline{\Omega^i})$ defined by
\begin{equation}\label{limsysthm.eq1}
\begin{aligned}
& v^- \equiv w^-_{\Omega^i}[\phi^i_0] +   \zeta_0\,  S_n &&\qquad \text{on } \R^n \setminus \Omega^i,
\\
& v^+ \equiv w^+_{\Omega^i}[\psi^i_0]  &&\qquad \text{on } \overline{\Omega^i},
\end{aligned}
\end{equation}
is a solution of 
	\begin{equation}\label{LP}
		\begin{cases}
			\Delta v^- = 0 & \mbox{in } \R^n \setminus \overline{\Omega^i}, \\
			\Delta v^+ = 0 & \mbox{in } \Omega^i, \\
			v^-(t) = (\partial_\zeta F) (0,t,\zeta^i) v^+(t) + (\partial_\epsilon F) (0,t,\zeta^i) - t \cdot \nabla{u}^o(0) & \forall t\in \partial\Omega^i,  \\
			\nu_{\Omega^i}(t) \cdot \nabla v^-(t) - \nu_{\Omega^i}(t) \cdot \nabla v^+(t) = G(0,t,\zeta^i) - \nu_{\Omega^i}(t) \cdot \nabla{u}^o(0) & \forall t \in \partial\Omega^i, \\
			\lim\limits_{t \to\infty} v^-(t)=0.
		\end{cases} 
	\end{equation}	
In particular, there exists a  unique solution  $(v^-,v^+) \in C^{1,\alpha}(\R^n\setminus\Omega^i) \times C^{1,\alpha}(\overline{\Omega^i})$ of \eqref{LP} and a unique solution $(\phi^o_0,\phi^i_0,\zeta_0,\psi^i_0) \in \COo \times \COi_0 \times \R \times \COi$  of \eqref{limsys}.
\end{teo}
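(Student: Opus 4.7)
The plan is to reduce \eqref{limsys} to the auxiliary transmission problem \eqref{LP} through the representation \eqref{limsysthm.eq1}, and then to conclude via Lemma \ref{limsyslem}. First, the first equation of \eqref{limsys} forces $\phi^o_0=0$: by Theorem \ref{fredalt}(i) the kernel of $\frac{1}{2}I + W_{\partial\Omega^o}$ on $\COo$ consists of functions that are constant on each bounded component of $\R^n\setminus\overline{\Omega^o}$ and that vanish on the unbounded component, and since $\R^n\setminus\overline{\Omega^o}$ is connected and unbounded this kernel is trivial. In particular $w^+_{\Omega^o}[\phi^o_0](0)=0$.

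I would then substitute \eqref{limsysthm.eq1} into what remains of \eqref{limsys}. By the jump formulas of Theorem \ref{sdp}(v),
\[
v^-_{|\partial\Omega^i} = \Bigl(-\tfrac{1}{2}I + W_{\partial\Omega^i}\Bigr)[\phi^i_0] + \zeta_0 \, {S_n}_{|\partial\Omega^i}, \qquad v^+_{|\partial\Omega^i} = \Bigl(\tfrac{1}{2}I + W_{\partial\Omega^i}\Bigr)[\psi^i_0],
\]
while Theorem \ref{sdp}(vi) gives $\nu_{\Omega^i}\cdot\nabla w^-_{\Omega^i}[\phi^i_0]=\nu_{\Omega^i}\cdot\nabla w^+_{\Omega^i}[\phi^i_0]$ on $\partial\Omega^i$, so that $\nu_{\Omega^i}\cdot\nabla v^-$ on $\partial\Omega^i$ is simply obtained by differentiating term by term the defining formula of $v^-$. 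Plugging these identities into the second and third equations of \eqref{limsys} and isolating $v^-$ and $v^+$ reproduces verbatim the third and fourth conditions of \eqref{LP}; harmonicity in the corresponding domains is automatic, and the decay $v^-\to 0$ at infinity follows from $n\geq 3$, which makes both $w^-_{\Omega^i}[\phi^i_0]$ and $\zeta_0 S_n$ vanish at infinity. The argument is reversible, which yields the stated equivalence.

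To promote this equivalence to a bijective correspondence between quadruples and pairs, I would invoke Lemma \ref{isolem2}, which gives that $(\mu,\xi)\mapsto \bigl(-\tfrac{1}{2}I + W_{\partial\Omega^i}\bigr)[\mu] + \xi \, {S_n}_{|\partial\Omega^i}$ is an isomorphism from $\COi_0\times\R$ onto $\COi$: this makes $(\phi^i_0,\zeta_0)$ uniquely determined by $v^-_{|\partial\Omega^i}$, and then $v^-$ is determined on $\R^n\setminus\Omega^i$ by uniqueness of the exterior Dirichlet problem with decay at infinity. Analogously, since $\R^n\setminus\overline{\Omega^i}$ is connected, Theorems \ref{fredalt}(i) and \ref{frediso}(i) imply that $\tfrac{1}{2}I + W_{\partial\Omega^i}$ is an isomorphism on $\COi$, so $\psi^i_0$ is uniquely determined by $v^+_{|\partial\Omega^i}$ and then $v^+$ by interior Dirichlet uniqueness.

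Finally, for unique solvability, I would apply Lemma \ref{limsyslem} to \eqref{LP} with $\lambda:=(\partial_\zeta F)(0,\cdot,\zeta^i)$, which is a positive constant by \eqref{zetaicond}, and with boundary data $f_1(t):=(\partial_\epsilon F)(0,t,\zeta^i)-t\cdot\nabla u^o(0)$ and $f_2(t):=G(0,t,\zeta^i)-\nu_{\Omega^i}(t)\cdot\nabla u^o(0)$; combined with the bijection of the previous paragraph, this delivers the unique solvability of \eqref{limsys}. The main point to monitor is the sign and jump bookkeeping in the second step, together with the use of continuity of the normal derivative of the double-layer potential across $\partial\Omega^i$: this is what makes the flux jump of $(v^-,v^+)$ land on exactly the combination of $\phi^i_0$, $\zeta_0$, and $\psi^i_0$ appearing in the third equation of \eqref{limsys}.
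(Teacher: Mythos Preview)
Your proposal is correct and follows essentially the same route as the paper: first use Theorem~\ref{fredalt}(i) (together with Theorem~\ref{frediso}(i)) to kill $\phi^o_0$, then use the jump relations of Theorem~\ref{sdp}(v) to identify the remaining two equations of \eqref{limsys} with the boundary conditions of \eqref{LP}, apply Lemma~\ref{limsyslem} with the positive constant $\lambda=(\partial_\zeta F)(0,\cdot,\zeta^i)$ from \eqref{zetaicond}, and recover the unique $(\phi^i_0,\zeta_0)$ and $\psi^i_0$ via Lemma~\ref{isolem2} and the invertibility of $\tfrac12 I+W_{\partial\Omega^i}$ combined with Dirichlet uniqueness. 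The only superfluous ingredient is your appeal to Theorem~\ref{sdp}(vi): the third equation of \eqref{limsys} already involves $\nu_{\Omega^i}\cdot\nabla w^-_{\Omega^i}[\phi^i_0]$, so the flux identity in \eqref{LP} follows directly from the definition of $v^-$ without invoking continuity of the normal derivative of the double layer.
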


\begin{proof}
By Theorem \ref{fredalt}$(i)$ and by Theorem \ref{frediso}$(i)$ the only solution of the first equation of \eqref{limsys} is  $\phi^o_0=0$.  Then, by Theorem \ref{sdp}$(v)$, one verifies that the triple $(\phi^i_0,\zeta_0,\psi^i_0) \in  \COi_0 \times \R \times \COi$  is a solution of the last two equations of \eqref{limsys} if and only if the pair  $(v^-,v^+)$ defined by \eqref{limsysthm.eq1} is a solution of \eqref{LP}.  In addition, Lemma \ref{limsyslem} implies that   \eqref{LP}  has a unique solution $(v^-,v^+)\in C^{1,\alpha}_{\mathrm{harm}}(\R^n\setminus\Omega^i) \times \COiharm$. Then the existence and uniqueness of   $(\phi^i_0,\zeta_0) \in \COi_0 \times \R$ follow by the uniqueness of the solution of the exterior Dirichlet problem, by the jump relations of Theorem \ref{sdp}$(v)$, and by Lemma \ref{isolem2}. Finally, the existence and uniqueness of $\psi^i_0 \in \COi$ can be deduced by the uniqueness of the solution of the  Dirichlet problem, by Theorem \ref{sdp}$(v)$, by Theorem \ref{fredalt}$(i)$ and by Theorem \ref{frediso}$(i)$.
\end{proof}

We incidentally observe that by integrating the third equation of \eqref{limsys} over $\partial\Omega^i$ we get
\[
\zeta_0=\int_{\partial\Omega^i}G (0,t,\zeta^i)\,d\sigma_t
\]
(cf.~Theorem \ref{sdp}$(viii)$).


\section{Application of the implicit function theorem}\label{applicationIFT}

In  view of  the equivalence of problem \eqref{princeq} and equations \eqref{equivsol3}, \eqref{equivsol1}, and \eqref{equivsol2}, we now introduce the auxiliary map $M=(M_1,M_2,M_3)$ from $]-\epsilon_0,\epsilon_0[  \times \COo \times \COi_0 \times \R \times \COi$ to $\COo \times \COi \times C^{0,\alpha}(\partial\Omega^i)$ defined by 
\begin{align*}
M_1[\epsilon, \phi^o, & \phi^i, \zeta, \psi^i](x) \equiv  \left(\frac{1}{2}I + W_{\partial\Omega^o}\right)[\phi^o](x) 
\\ & -  \epsilon^{n-1} \int_{\partial\Omega^i}{\nu_{\Omega^i}(y) \cdot \nabla S_n(x-\epsilon y) \phi^i(y) \,d\sigma_y} +  \epsilon^{n-2}\zeta \, S_n(x) && \forall x \in \partial\Omega^o,
\\
M_2[\epsilon, \phi^o, &\phi^i, \zeta, \psi^i](t) \equiv  \, t \cdot \nabla {u}^o(0) + \epsilon \tilde{u}^o(\epsilon, t) + \left(-\frac{1}{2}I + W_{\partial\Omega^i}\right)[\phi^i](t)  
\\
& + \zeta\, S_n(t)  + w^+_{\Omega^o}[\phi^o](\epsilon t) - (\partial_\epsilon F) (0,t,\zeta^i) - (\partial_\zeta F) (0,t,\zeta^i) 
\\
& \times \left(\frac{1}{2}I + W_{\partial\Omega^i}\right)[\psi^i](t) - \epsilon \tilde{F}\left(\epsilon,t,\zeta^i,\left(\frac{1}{2}I + W_{\partial\Omega^i}\right)[\psi^i](t)\right) && \forall t \in \partial\Omega^i, 
\\
M_3[\epsilon, \phi^o, &\phi^i, \zeta, \psi^i](t) \equiv \, \nu_{\Omega^i}(t) \cdot \Big( \nabla {u}^o(\epsilon t) + \epsilon \nabla w^+_{\Omega^o}[\phi^o](\epsilon t) 
\\ 
& + \nabla w^-_{\Omega^i}[\phi^i](t) +  \nabla S_n(t) \zeta - \nabla w^+_{\Omega^i}[\psi^i](t) \Big)  
\\
& - G \left(\epsilon,t,\epsilon \left(\frac{1}{2}I + W_{\partial\Omega^i}\right)[\psi^i](t) + \zeta^i \right) && \forall t \in \partial\Omega^i,
\end{align*}
for all $(\epsilon,  \phi^o, \phi^i, \zeta, \psi^i) \in ]-\epsilon_0,\epsilon_0[ \times \R \times \COo \times \COi_0 \times \R \times \COi$. Then one readily verifies the following.

\begin{prop}\label{Me=0}
Let assumptions  \eqref{zetaicond}  and \eqref{F1} hold true. Let $\epsilon\in]0,\epsilon_0[$. Then the system consisting of equations \eqref{equivsol3}, \eqref{equivsol1}, and \eqref{equivsol2} is equivalent to 
\begin{equation}\label{Me=0.e1}
M[\epsilon,  \phi^o, \phi^i, \zeta, \psi^i]=(0,0,0)\,.
\end{equation}
\end{prop}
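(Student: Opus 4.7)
The statement is essentially tautological once one compares the definitions side by side: the three components $M_1$, $M_2$, $M_3$ were manufactured precisely so that each equation $M_j[\epsilon,\phi^o,\phi^i,\zeta,\psi^i]=0$ reproduces one of the integral equations \eqref{equivsol3}, \eqref{equivsol1}, \eqref{equivsol2}. So my plan is simply to verify the matching term-by-term.

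First I would write out $M_1[\epsilon,\phi^o,\phi^i,\zeta,\psi^i](x)$ for $x\in\partial\Omega^o$ and observe that it coincides, symbol for symbol, with the left-hand side of \eqref{equivsol3}. Hence $M_1=0$ on $\partial\Omega^o$ is the same as \eqref{equivsol3}. Next I would take $M_2[\epsilon,\phi^o,\phi^i,\zeta,\psi^i](t)$ for $t\in\partial\Omega^i$ and note that it is obtained by moving every term of \eqref{equivsol1} to the left-hand side; in particular the terms $(\partial_\epsilon F)(0,t,\zeta^i)$, $(\partial_\zeta F)(0,t,\zeta^i)\bigl(\tfrac{1}{2}I+W_{\partial\Omega^i}\bigr)[\psi^i](t)$, and $\epsilon\tilde F(\epsilon,t,\zeta^i,\bigl(\tfrac{1}{2}I+W_{\partial\Omega^i}\bigr)[\psi^i](t))$ appear with the opposite sign. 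Hence $M_2=0$ on $\partial\Omega^i$ is equivalent to \eqref{equivsol1}. Finally, $M_3[\epsilon,\phi^o,\phi^i,\zeta,\psi^i](t)$ for $t\in\partial\Omega^i$ is the difference of the two sides of \eqref{equivsol2}, so $M_3=0$ on $\partial\Omega^i$ matches \eqref{equivsol2}.

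The only point requiring a minimum of care is to check that the codomain of $M$ is the correct Banach space, i.e.\ that $M_1[\epsilon,\cdot]\in\COo$, $M_2[\epsilon,\cdot]\in\COi$ and $M_3[\epsilon,\cdot]\in C^{0,\alpha}(\partial\Omega^i)$ for $\epsilon\in]0,\epsilon_0[$. This is, however, immediate from the mapping properties of the layer potentials collected in Theorem \ref{sdp}, from the smoothness of the Newtonian kernel $S_n$ and of its derivatives away from the origin (recall that $\epsilon\ne 0$, so $x-\epsilon y\ne 0$ for $x\in\partial\Omega^o$ and $y\in\partial\Omega^i$), from Lemma \ref{tayuo} (which guarantees $\tilde u^o(\epsilon,\cdot)\in\COi$), from the regularity of ${u}^o$ near the origin (used in $M_2$ and $M_3$), and from assumption \eqref{F1} together with the standard composition properties of H\"older spaces that make $\tilde F(\epsilon,\cdot,\zeta^i,(\tfrac{1}{2}I+W_{\partial\Omega^i})[\psi^i](\cdot))$ an element of $\COi$.

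I do not expect any substantial obstacle: the hard work has been done upstream in Proposition \ref{equivsol}, where \eqref{princeq} was converted into \eqref{equivsol3}--\eqref{equivsol2}. The present proposition is merely the observation that the map $M$ has been defined as the residual of that system and therefore the two formulations are equivalent by construction.
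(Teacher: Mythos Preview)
Your proposal is correct and matches the paper's approach: the paper itself gives no proof beyond ``Then one readily verifies the following,'' and your term-by-term comparison is exactly that verification spelled out. Your extra paragraph on the codomain of $M$ is not strictly needed for the equivalence (which is purely a pointwise identity) and in fact the paper postpones the full regularity of $M$ to Proposition~\ref{Mana}, where the stronger assumption~\eqref{realanalhp} is invoked; under \eqref{F1} alone the membership $\tilde F(\epsilon,\cdot,\zeta^i,\ldots)\in\COi$ is not obviously guaranteed, so it is safest to drop that remark here.
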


We now wish to apply the implicit function theorem for real analytic functions (see, for example, Deimling \cite[Thm.~15.3]{De85}) to equation \eqref{Me=0.e1} around the degenerate value $\epsilon=0$. As a first step we have to analyse the regularity of the map $M$. We begin by introducing some notation. 
\begin{defin}
If $H$ is a function from  $]-\epsilon_0,\epsilon_0[ \times \partial \Omega ^i \times \R$ to $\R$, then we denote by $\mathcal{N}_H$ the (nonlinear non-autonomous) superposition operator which takes a pair $(\epsilon,v)$ consisting of  a real number  $\epsilon\in ]-\epsilon_0,\epsilon_0[$ and of a function $v$ from $\partial\Omega^i$ to $\R$ to the function $\mathcal{N}_H(\epsilon,v)$ defined by
\[
\mathcal{N}_H(\epsilon,v) (t) \equiv H(\epsilon,t,v(t))\qquad\forall t\in\partial\Omega^i\,.
\]
\end{defin} 
Here the letter ``$\mathcal{N}$" stands for ``Nemytskii operator". 
\begin{rem}\label{differenzialeN_H}
If $H$ is a function from $]-\epsilon_0,\epsilon_0[ \times \partial \Omega ^i \times \R$ to $\R$ such that the superposition operator $\mathcal{N}_H$ is real analytic from $ ]-\epsilon_0,\epsilon_0[ \times \COi$ to $\COi$, then for every $(\epsilon,\overline{v}) \in ]-\epsilon_0,\epsilon_0[ \times \COi$ we have
\begin{equation}\label{d_vN_Hformula}
d_v \mathcal{N}_H (\epsilon, \overline{v}). \tilde{v} = \mathcal{N}_{(\partial_\zeta H)} (\epsilon,\overline{v}) \tilde{v} \qquad \forall \tilde{v} \in \COi. 
\end{equation}
The same result holds replacing the domain  and the target space of the operator $\mathcal{N}_H$ with $]-\epsilon_0,\epsilon_0[ \times C^{0,\alpha}(\partial\Omega^i)$ and $C^{0,\alpha}(\partial\Omega^i)$ respectively and using functions $\overline{v},\tilde{v} \in C^{0,\alpha}(\partial\Omega^i)$ in \eqref{d_vN_Hformula}. 
\end{rem}

The proof of Remark \ref{differenzialeN_H} is a straightforward modification of the corresponding argument of Lanza de Cristoforis \cite[Prop.~6.3]{La07}.

To prove that $M$ is real analytic we will exploit the following assumption:
\begin{equation}\label{realanalhp}
\begin{split}
&\text{For all $(\epsilon,v)\in]-\epsilon_0,\epsilon_0[\times \COi$}
\\
&\text{we have $\mathcal{N}_F(\epsilon,v)\in\COi$ and $\mathcal{N}_G(\epsilon,v)\in C^{0,\alpha}(\partial\Omega^i)$.} 
\\
& \text{Moreover, the superposition operator $\mathcal{N}_F$ is real analytic from} 
\\ & \text{$ ]-\epsilon_0,\epsilon_0[ \times \COi$ to $\COi$ and the superposition operator}\\
&\text{ $\mathcal{N}_G$ is real analytic from $ ]-\epsilon_0,\epsilon_0[ \times \COi$ to $C^{0,\alpha}(\partial\Omega^i)$. }
\end{split}
\end{equation}

Then we have the following technical Lemma \ref{Fana}.

\begin{lemma}\label{Fana}
Let assumptions  \eqref{zetaicond}, \eqref{F1}, and \eqref{realanalhp} hold true. Then, the map from $]-\epsilon_0,\epsilon_0[  \times  \COi$ to $\COi$ which takes $(\epsilon,\psi^i)$ to the function
\[
\tilde{F}\left(\epsilon,t,\zeta^i,\left(\frac{1}{2}I + W_{\partial\Omega^i}\right)[\psi^i](t)\right)\qquad\forall t\in\partial\Omega^i
\]
is real analytic (see also \eqref{Taylem.eq1}).
\end{lemma}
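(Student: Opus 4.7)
The plan is to express the given map as a composition of real analytic maps and then to exploit a standard ``division by $\epsilon^2$'' argument. First I set
\[
b_{\psi^i}(t)\equiv \left(\frac{1}{2}I + W_{\partial\Omega^i}\right)[\psi^i](t)\qquad\forall t\in\partial\Omega^i\,,
\]
which defines a bounded linear operator $\psi^i\mapsto b_{\psi^i}$ from $\COi$ to itself, and I introduce the auxiliary map
\[
h(\epsilon,\psi^i)(t)\equiv F\!\left(\epsilon,t,\zeta^i+\epsilon\, b_{\psi^i}(t)\right) = \mathcal{N}_F\!\left(\epsilon,\,\zeta^i+\epsilon\, b_{\psi^i}\right)(t)\,.
\]
The inner map $(\epsilon,\psi^i)\mapsto \zeta^i+\epsilon\, b_{\psi^i}$ from $]-\epsilon_0,\epsilon_0[\times \COi$ to $\COi$ is the sum of a constant and a continuous bilinear map, hence real analytic. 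Composing with the real analytic superposition operator $\mathcal{N}_F$ provided by \eqref{realanalhp}, I conclude that $h\colon ]-\epsilon_0,\epsilon_0[\times \COi\to \COi$ is real analytic.

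Next I invoke Lemma \ref{Taylem} with $a=\zeta^i$ and $b=b_{\psi^i}(t)$ to obtain the pointwise identity
\[
\epsilon^2\, \tilde{F}\!\left(\epsilon,t,\zeta^i,b_{\psi^i}(t)\right) = h(\epsilon,\psi^i)(t) - F(0,t,\zeta^i) - \epsilon\,(\partial_\epsilon F)(0,t,\zeta^i) - \epsilon\, b_{\psi^i}(t)\,(\partial_\zeta F)(0,t,\zeta^i)\,.
\]
Call the right-hand side $H(\epsilon,\psi^i)(t)$. By \eqref{zetaicond} the functions $F(0,\cdot,\zeta^i)$ and $(\partial_\zeta F)(0,\cdot,\zeta^i)$ are constants on $\partial\Omega^i$, while $(\partial_\epsilon F)(0,\cdot,\zeta^i)$ belongs to $\COi$ since it is the derivative at $\epsilon=0$ of the real analytic curve $\epsilon\mapsto \mathcal{N}_F(\epsilon,\zeta^i)\in\COi$. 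It follows that $H$ is real analytic from $]-\epsilon_0,\epsilon_0[\times \COi$ to $\COi$. Moreover $H(0,\psi^i)=0$ trivially, and by computing $(\partial_\epsilon h)(0,\psi^i)$ via the chain rule together with Remark \ref{differenzialeN_H} one also gets $(\partial_\epsilon H)(0,\psi^i)=0$ for every $\psi^i\in\COi$.

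Finally, I appeal to the general fact that a real analytic map $H\colon ]-\epsilon_0,\epsilon_0[\times X\to Y$ (with $X$, $Y$ Banach spaces) satisfying $H(0,x)=0$ and $(\partial_\epsilon H)(0,x)=0$ for all $x\in X$ admits the factorization $H(\epsilon,x)=\epsilon^2\, K(\epsilon,x)$, where
\[
K(\epsilon,x)=\int_0^1 (1-\tau)\,(\partial_\epsilon^2 H)(\tau\epsilon,x)\,d\tau
\]
is itself real analytic. Applying this to our $H$ and observing that $K(\epsilon,\psi^i)(t)=\tilde{F}(\epsilon,t,\zeta^i,b_{\psi^i}(t))$ yields the claim. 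The main obstacle is precisely this final ``division by $\epsilon^2$'' step, which is standard in the functional analytic approach but still requires one to verify that the integral preserves real analyticity; this can be done either via a direct manipulation of convergent power series in $\epsilon$ (expanding $h(\epsilon,\psi^i)=\sum_{k\ge 0}\epsilon^k h_k(\psi^i)$, the coefficients for $k=0,1$ cancel exactly against the subtracted terms and the shifted series $\sum_{k\ge 0}\epsilon^k h_{k+2}(\psi^i)$ has the same radius of convergence as the original) or by invoking continuity of the integration operator from $C^0([0,1];\COi)$ to $\COi$.
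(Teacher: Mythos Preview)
Your argument is correct and takes a genuinely different route from the paper's proof. The paper works directly with the integral formula \eqref{Taylem.eq1} for $\tilde{F}$: it rewrites each of the three integrands in terms of the second differentials $d^2_\epsilon\mathcal{N}_F$, $d_\epsilon d_v\mathcal{N}_F$, $d^2_v\mathcal{N}_F$ of the superposition operator, and then invokes the Appendix result (Theorem \ref{Sf(w)A(tw)}), which asserts that $w\mapsto\int_0^1 f(\tau)A(\tau w)\,d\tau$ is real analytic whenever $A$ is. You instead go the other way: you first build the real analytic map $h(\epsilon,\psi^i)=\mathcal{N}_F(\epsilon,\zeta^i+\epsilon\, b_{\psi^i})$, subtract its first-order Taylor jet in $\epsilon$ to obtain $H$, and recover $\tilde{F}(\epsilon,\cdot,\zeta^i,b_{\psi^i})=H(\epsilon,\psi^i)/\epsilon^2$ by a power-series shift. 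Your route is more elementary in that, once one takes the power-series option in the last step, it bypasses the Pettis-integral machinery of the Appendix entirely; the paper's route is more modular, proving Theorem \ref{Sf(w)A(tw)} once and applying it term by term.

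One small caution on your closing sketch: expanding only in $\epsilon$ as $h(\epsilon,\psi^i)=\sum_k\epsilon^k h_k(\psi^i)$ and shifting the series establishes real analyticity in $\epsilon$ for each fixed $\psi^i$, but you need \emph{joint} real analyticity. The fix is immediate: near any $(0,\psi^i_*)$ use the full two-variable expansion $H(\epsilon,\psi^i)=\sum_{i,j}\epsilon^i\, c_{ij}[(\psi^i-\psi^i_*)^{(j)}]$ with $\|c_{ij}\|\le M\rho^{-i-j}$; your vanishing conditions force $c_{0j}=c_{1j}=0$ for all $j$, and the shifted coefficients $c_{i+2,j}$ satisfy $\|c_{i+2,j}\|\le (M\rho^{-2})\rho^{-i-j}$, giving joint analyticity near $\epsilon=0$, while for $\epsilon\neq 0$ the quotient $H/\epsilon^2$ is trivially real analytic. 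Your alternative via continuity of the integration operator would still require showing that $(\epsilon,\psi^i)\mapsto(\tau\mapsto(\partial_\epsilon^2 H)(\tau\epsilon,\psi^i))$ is real analytic into $C^0([0,1];\COi)$, which in effect reproves the Appendix theorem; the power-series route is the self-contained one.
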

\begin{proof}
We plan to exploit Theorem \ref{Sf(w)A(tw)} in the Appendix. We begin by observing that, by the definition of $\tilde{F}$ in \eqref{Taylem.eq1} and by equalities \eqref{Taylem.eq2} and \eqref{d_vN_Hformula},  we have

\begin{align*}
&\tilde{F}\left(\epsilon,t,\zeta^i,\left(\frac{1}{2}I + W_{\partial\Omega^i}\right)[\psi^i](t)\right)
\\
&=
\int_{0}^{1} (1-\tau)  \left\{ (\partial^2_\epsilon F)\left(\tau\epsilon,t,\zeta^i + \tau\epsilon \left(\frac{1}{2}I + W_{\partial\Omega^i}\right)[\psi^i](t)\right) \right.
\\
&\qquad + 2\left(\frac{1}{2}I + W_{\partial\Omega^i}\right)[\psi^i](t)  
\,(\partial_\epsilon \partial_\zeta F)\left(\tau\epsilon,t,\zeta^i + \tau\epsilon \left(\frac{1}{2}I + W_{\partial\Omega^i}\right)[\psi^i](t)\right) 
\\
&\qquad \left.
+ \left(\left(\frac{1}{2}I + W_{\partial\Omega^i}\right)[\psi^i]\right)^2(t) \, (\partial^2_\zeta F)\left(\tau\epsilon,t,\zeta^i + \tau\epsilon \left(\frac{1}{2}I + W_{\partial\Omega^i}\right)[\psi^i](t)\right)  \right\} \,d\tau
\\
& = \int_{0}^{1} (1-\tau) \, d^2_\epsilon \mathcal{N}_F \left(\tau\epsilon, \tau\epsilon\left(\frac{1}{2}I + W_{\partial\Omega^i}\right)[\psi^i] +\zeta^i \right).(1,1) (t) \,d\tau 
\\
&\qquad + 2 \left(\frac{1}{2}I + W_{\partial\Omega^i}\right)[\psi^i](t)  
\\
&\qquad \times \int_{0}^{1} (1-\tau) \, d_\epsilon d_v \mathcal{N}_F \left(\tau\epsilon, \tau\epsilon\left(\frac{1}{2}I + W_{\partial\Omega^i}\right)[\psi^i] +\zeta^i \right).(1,1_{\partial\Omega^i}) (t) \,d\tau 
\\
&\qquad + \left(\left(\frac{1}{2}I + W_{\partial\Omega^i}\right)[\psi^i]\right)^2(t)  
\\
&\qquad \times \int_{0}^{1} (1-\tau) \, d^2_v \mathcal{N}_F \left(\tau\epsilon, \tau\epsilon\left(\frac{1}{2}I + W_{\partial\Omega^i}\right)[\psi^i] +\zeta^i \right).(1_{\partial\Omega^i},1_{\partial\Omega^i}) (t) \,d\tau 
\end{align*}
for all $(\epsilon,t,\psi^i) \in ]-\epsilon_0,\epsilon_0[ \times \partial\Omega^i \times \COi$ (cf.~Remark \ref{differenzialeN_H}). Here above $1_{\partial \Omega^i}$  denotes the constant function identically equal to $1$ on $\partial\Omega^i$. 

Now let $A$ be the map from $]-\epsilon_0,\epsilon_0[ \times \COi$ to $\COi$ which takes a pair $h=(h_1,h_2)$ to
\begin{equation*}
A(h)(t) \equiv d^2_\epsilon \mathcal{N}_F \left(h_1, h_2 +\zeta^i \right).(1,1) (t) \qquad \forall t \in \partial \Omega^i\,.
\end{equation*}
By assumption \eqref{realanalhp} one deduces that $A$ is real analytic and thus  Theorem \ref{Sf(w)A(tw)} in the Appendix implies that the map from $]-\epsilon_0,\epsilon_0[ \times \COi$ to $\COi$ which takes $h$ to 
\[
\int_{0}^{1} (1-\tau) A(\tau h) \,d\tau
\]
is also real analytic. Then we set
\begin{equation*}
h[\epsilon,\psi^i] = (h_1[\epsilon,\psi^i],h_2[\epsilon,\psi^i]) \equiv \left( \epsilon, \epsilon \left(\frac{1}{2}I + W_{\partial\Omega^i}\right)[\psi^i] \right) 
\end{equation*}
for all $(\epsilon,\psi^i) \in ]-\epsilon_0,\epsilon_0[  \times \COi$ and we observe that the map from the space $]-\epsilon_0,\epsilon_0[  \times \COi$ to itself which takes $(\epsilon,\psi^i)$ to $h[\epsilon,\psi^i]$ is  real analytic (because the first component is linear and continuous and the second one is bilinear and continuous). Since the composition of real analytic maps is real analytic, it follows that the map  from $]-\epsilon_0,\epsilon_0[  \times \COi$ to $\COi$ which takes $(\epsilon,\psi^i)$ to 
\begin{equation*}
 \int_{0}^{1} (1-\tau) \, d^2_\epsilon \mathcal{N}_F \left(\tau\epsilon, \tau\epsilon\left(\frac{1}{2}I + W_{\partial\Omega^i}\right)[\psi^i] +\zeta^i \right).(1,1) (t) \,d\tau 
\end{equation*}
is real analytic. In a similar way, one can prove that the map from the space $]-\epsilon_0,\epsilon_0[  \times \COi$ to $\COi$ which takes $(\epsilon,\psi^i)$ to 
\begin{equation*}
\int_{0}^{1} (1-\tau) \, d_\epsilon d_v \mathcal{N}_F \left(\tau\epsilon, \tau\epsilon\left(\frac{1}{2}I + W_{\partial\Omega^i}\right)[\psi^i] +\zeta^i \right).(1,1_{\partial\Omega^i}) (t) \,d\tau
\end{equation*}
and the map from $]-\epsilon_0,\epsilon_0[  \times \COi$ to $\COi$ which takes $(\epsilon,\psi^i)$ to 
\begin{equation*}
\int_{0}^{1} (1-\tau) \, d^2_v \mathcal{N}_F \left(\tau\epsilon, \tau\epsilon\left(\frac{1}{2}I + W_{\partial\Omega^i}\right)[\psi^i] +\zeta^i \right).(1_{\partial\Omega^i},1_{\partial\Omega^i}) (t) \,d\tau 
\end{equation*}
are real analytic. The map from $\COi$ to itself which takes $\psi^i$ to the function $\left(\frac{1}{2}I + W_{\partial\Omega^i}\right)[\psi^i]$ is linear and continuous, hence real analytic. Since the product of real analytic maps is real analytic, the map from $\COi$ to itself which takes $\psi^i$ to the function $\left(\left(\frac{1}{2}I + W_{\partial\Omega^i}\right)[\psi^i]\right)^2$ is real analytic. Finally,  since the sum of real analytic maps is real analytic, we conclude that the map from $]-\epsilon_0,\epsilon_0[  \times \COi$ to $\COi$ which takes $(\epsilon,\psi^i)$ to the function
\begin{equation*}
\tilde{F}\left(\epsilon,t,\zeta^i,\left(\frac{1}{2}I + W_{\partial\Omega^i}\right)[\psi^i](t)\right) \qquad \forall t \in \partial \Omega^i
\end{equation*}
is real analytic. The lemma is now proved.
\end{proof}

We now show that $M$ is real analytic.

\begin{prop}\label{Mana}
Let assumptions  \eqref{zetaicond}, \eqref{F1}, and \eqref{realanalhp} hold true. Then, the map $M$ is real analytic  from $]-\epsilon_0,\epsilon_0[  \times \COo \times \COi_0 \times \R \times \COi$ to $\COo \times \COi \times C^{0,\alpha}(\partial\Omega^i)$. 
\end{prop}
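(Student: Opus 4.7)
The plan is to decompose $M=(M_1,M_2,M_3)$ into a finite sum of products of maps, show each factor is individually real analytic, and then conclude using the standard fact that sums, products, and compositions of real analytic maps between Banach spaces are real analytic (in particular, every continuous linear operator and every constant map is real analytic).

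Several summands reduce immediately. The boundary operators $\tfrac12 I+W_{\partial\Omega^o}$ and $\pm\tfrac12 I+W_{\partial\Omega^i}$, the trace map $\phi^o\mapsto w^+_{\Omega^o}[\phi^o]_{|\partial\Omega^o}$, and the normal-derivative operators $\phi^i\mapsto\nu_{\Omega^i}\cdot\nabla w^-_{\Omega^i}[\phi^i]$ and $\psi^i\mapsto\nu_{\Omega^i}\cdot\nabla w^+_{\Omega^i}[\psi^i]$ (seen as operators with values in $C^{0,\alpha}(\partial\Omega^i)$) are linear and continuous between the relevant H\"older spaces by Theorem \ref{sdp} and its jump relations. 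Every monomial in the scalar variables $(\epsilon,\zeta)$ multiplied by a fixed element of the target space (for example $\epsilon^{n-2}\zeta\,{S_n}_{|\partial\Omega^o}$, $\zeta\,{S_n}_{|\partial\Omega^i}$, $\zeta\,\nu_{\Omega^i}\cdot\nabla S_n$, $t\cdot\nabla{u}^o(0)$, $(\partial_\epsilon F)(0,\cdot,\zeta^i)$ and $(\partial_\zeta F)(0,\cdot,\zeta^i)$) is multilinear continuous in its arguments and therefore real analytic; note that the two derivatives of $F$ at $(0,\cdot,\zeta^i)$ belong to $\COi$ as first-order partial derivatives of $\mathcal{N}_F$ at the constant input $\zeta^i$, thanks to \eqref{realanalhp}. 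The summand $\epsilon\,\tilde{u}^o(\epsilon,\cdot)$ in $M_2$ is real analytic by Lemma \ref{tayuo}, while $\epsilon\,\tilde{F}(\epsilon,\cdot,\zeta^i,(\tfrac12 I+W_{\partial\Omega^i})[\psi^i](\cdot))$ is real analytic by Lemma \ref{Fana}.

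The remaining terms split into two families. For the nonlinear superposition in $M_3$ I would write
\begin{equation*}
G\bigl(\epsilon,t,\epsilon(\tfrac12 I+W_{\partial\Omega^i})[\psi^i](t)+\zeta^i\bigr)=\mathcal{N}_G\bigl(\epsilon,\,\epsilon(\tfrac12 I+W_{\partial\Omega^i})[\psi^i]+\zeta^i\bigr)(t),
\end{equation*}
observe that the inner map $(\epsilon,\psi^i)\mapsto(\epsilon,\epsilon(\tfrac12 I+W_{\partial\Omega^i})[\psi^i]+\zeta^i)$ from $]-\epsilon_0,\epsilon_0[\times\COi$ to itself is real analytic (a continuous bilinear operator plus a constant), and then invoke the real analyticity of $\mathcal{N}_G$ granted by \eqref{realanalhp}. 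The main obstacle consists of the remaining terms, in which the variable $\epsilon$ enters through the arguments of layer-potential kernels and of harmonic functions: the integral
\begin{equation*}
\epsilon^{n-1}\int_{\partial\Omega^i}\nu_{\Omega^i}(y)\cdot\nabla S_n(x-\epsilon y)\phi^i(y)\,d\sigma_y\qquad x\in\partial\Omega^o,
\end{equation*}
and the evaluations $u^o(\epsilon t)$, $\nabla u^o(\epsilon t)$, $w^+_{\Omega^o}[\phi^o](\epsilon t)$, $\nabla w^+_{\Omega^o}[\phi^o](\epsilon t)$ for $t\in\partial\Omega^i$. The crucial observation is that $\epsilon\overline{\Omega^i}\subseteq\Omega^o$ for every $\epsilon\in]-\epsilon_0,\epsilon_0[$, so in each of these expressions the arguments of $S_n$ stay uniformly away from the singular set $\{x=y\}$, and $u^o$ and $w^+_{\Omega^o}[\phi^o]$ are harmonic (hence real analytic) in a neighbourhood of every point of the form $\epsilon t$. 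Concretely, I would treat $w^+_{\Omega^o}[\phi^o](\epsilon t)$ via the integral representation
\begin{equation*}
w^+_{\Omega^o}[\phi^o](\epsilon t)=-\int_{\partial\Omega^o}\nu_{\Omega^o}(y)\cdot\nabla S_n(\epsilon t-y)\phi^o(y)\,d\sigma_y,
\end{equation*}
(and similarly for its gradient by differentiation under the integral) so that analyticity in $\epsilon$ and continuous linearity in $\phi^o$ become manifest; the terms $u^o(\epsilon t)$ and $\nabla u^o(\epsilon t)$ are handled analogously via the interior real analyticity of $u^o$, by standard results on integral operators with jointly analytic parameter-dependent kernels and on composition operators in H\"older spaces (in the spirit of \cite[Thm.~5.2, p.~44]{Va88}). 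Collecting all contributions delivers the real analyticity of $M$ from $]-\epsilon_0,\epsilon_0[\times\COo\times\COi_0\times\R\times\COi$ to $\COo\times\COi\times C^{0,\alpha}(\partial\Omega^i)$.
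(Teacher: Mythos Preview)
Your proposal is correct and follows essentially the same term-by-term strategy as the paper's proof: linear continuous operators are real analytic, the summands $\epsilon\,\tilde{u}^o(\epsilon,\cdot)$ and $\epsilon\,\tilde{F}(\cdots)$ are handled by Lemmas \ref{tayuo} and \ref{Fana}, the $G$-term is treated as a composition with $\mathcal{N}_G$ via \eqref{realanalhp}, and the $\epsilon$-dependent kernel and evaluation terms are dealt with through results on integral operators with real analytic kernels and composition operators in H\"older spaces (the paper invokes \cite{LaMu13} and \cite[Thm.~5.2]{Va88} at exactly these points). The only minor slip is that $u^o(\epsilon t)$ does not itself appear in $M$---it has already been replaced by its Taylor expansion, so only $\nabla u^o(\epsilon t)$ in $M_3$ requires the composition-operator argument---but this is harmless.
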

\begin{proof}
We first show that $M_1$ is real analytic from the space $]-\epsilon_0,\epsilon_0[  \times \COo \times \COi_0 \times \R \times \COi$ to $\COo$. To do so, we analyse $M_1$ term by term. 
The map from $\COo$ to $\COo$ which takes $\phi^o$ to $\left(\frac{1}{2}I + W_{\partial\Omega^o}\right)[\phi^o]$ is linear and continuous, so real analytic. The second term can be treated in this way: one considers the integral operator from $]-\epsilon',\epsilon'[ \times L^1(\partial\Omega^i)$ to $\COo$ which takes the pair $(\epsilon,f)$ to $\int_{\partial\Omega^i}{\nu_{\Omega^i}(y) \cdot \nabla S_n(\cdot -\epsilon y) f(y) \,d\sigma_y}$. By the real analyticity of $S_n$ on $\R^n \setminus \{0\}$, by the fact that the integral kernel does not display singularities and since $\COi_0$ is linearly and continuously imbedded in $L^1(\partial\Omega^i)$, we conclude that the map from $]-\epsilon',\epsilon'[ \times \COi_0$ to $\COo$ which takes the pair $(\epsilon,\phi^i)$ to $\int_{\partial\Omega^i}{\nu_{\Omega^i}(y) \cdot \nabla S_n(\cdot -\epsilon y) \phi^i(y) \,d\sigma_y}$ is real analytic (see also Lanza de Cristoforis and Musolino \cite{LaMu13}). Finally, one easily verifies that the map from $]-\epsilon',\epsilon'[ \times \R$ to $\COo$ which takes $(\epsilon,\zeta)$ to $\epsilon^{n-2} S_n(\cdot) \zeta$ is real analytic. 

We now analyse $M_2$.
For the first term there is nothing to say, because it does not depend on $(\epsilon,  \phi^o, \phi^i, \zeta, \psi^i)$. For the second term, we invoke Lemma \ref{tayuo}. The map from $\COi_0$ to $\COi$ which takes $\phi^i$ to $\left(-\frac{1}{2}I + W_{\partial\Omega^i}\right)[\phi^i]$ is linear and continuous, so real analytic. Since continuous linear maps are real analytic, the map from $\R$ to $\COi$ which takes $\zeta$ to $\zeta \,S_n(\cdot)$ is real analytic. The map from $]-\epsilon_0,\epsilon_0[ \times \COo$ to $\COi$ which takes $\epsilon$ to $w^+_{\Omega^o}[\phi^o](\epsilon \cdot)$ can be proven to be real analytic by the properties of integral operators with real analytic kernels (see Lanza de Cristoforis and Musolino \cite{LaMu13}). For the sixth term there is nothing to say, because it does not depend on $(\epsilon,  \phi^o, \phi^i, \zeta, \psi^i)$. For the seventh term, the map from $\COi$ to $\COi$ which takes $\psi^i$ to $(\partial_\zeta F) (0,\cdot,\zeta^i)  \left(\frac{1}{2}I + W_{\partial\Omega^i}\right)[\psi^i]$ is linear and continuous and hence real analytic. Finally, for the eighth term, we invoke Lemma \ref{Fana}.
	
Then we pass to consider $M_3$. 
The map from $]-\epsilon_0,\epsilon_0[$ to $(C^{0,\alpha}(\overline{\Omega^i}))^n$ which takes $\epsilon$ to the function $\epsilon t$ of the variable $t$ is real analytic. Moreover we have $\epsilon t\in\Omega^o$ for all $\epsilon\in]-\epsilon_0,\epsilon_0[$ and all $t \in\overline{\Omega^i}$. Then, by the real analyticity of $\nu_{\Omega^i} \cdot \nabla u^o$ in $\Omega^o$ and by known results on composition operators (cf.~Valent \cite[Thm.~5.2, p.~44]{Va88}), one verifies that the map from
\begin{equation*}
\{h \in (C^{0,\alpha}(\overline{\Omega^i}))^n : h(\overline{\Omega^i}) \subset \Omega^o \}
\end{equation*}
to $C^{0,\alpha}(\overline{\Omega^i})$ which takes a function $h$ to $\nu_{\Omega^i} \cdot \nabla  u^o(h(\cdot))$ is real analytic. Since the restriction operator is linear and continuous from $C^{0,\alpha}(\overline{\Omega^i})$ to $C^{0,\alpha}(\partial\Omega^i)$, we conclude that the map from $]-\epsilon',\epsilon'[$ to $C^{0,\alpha}(\partial\Omega^i)$ which takes $\epsilon$ to $\nu_{\Omega^i} \cdot \nabla {u}^o(\epsilon \cdot)$ is real analytic.
Since continuous linear maps are real analytic, the map from $\R$ to $C^{0,\alpha}(\partial\Omega^i)$ which takes $\zeta$ to $\nu_{\Omega^i} \cdot \nabla S_n \zeta$ is real analytic. By the properties of integral operators with real analytic kernels (see Lanza de Cristoforis and Musolino \cite{LaMu13}) it follows that the map from $]-\epsilon',\epsilon'[ \times \COo$ to $C^{0,\alpha}(\partial\Omega^i)$ which takes $(\epsilon, \phi^o)$ to $\nu_{\Omega^i} \cdot \epsilon \nabla w^+_{\Omega^o}[\phi^o](\epsilon \cdot)$ is real analytic.
Since linear and continuous map are real analytic, the map from $\COi$ to $C^{0,\alpha}(\partial\Omega^i)$ which takes $\phi^i$ to $\nu_{\Omega^i} \cdot \nabla w^-_{\Omega^i}[\phi^i]$, the map from $\COi$ to $C^{0,\alpha}(\partial\Omega^i)$ which takes $\psi^i$ to $\nu_{\Omega^i} \cdot \nabla w^+_{\Omega^i}[\psi^i]$, and the map from $\COi$ to $C^{0,\alpha}(\partial\Omega^i)$ which takes $\psi^i$ to $\left(\frac{1}{2}I + W_{\partial\Omega^i}\right)[\psi^i]$ are real analytic. Since product of real analytic functions is real analytic, the map from $]-\epsilon',\epsilon[ \times \COi$ to $C^{0,\alpha}(\partial\Omega^i)$ which takes $(\epsilon,\psi^i)$ to $\epsilon\left(\frac{1}{2}I + W_{\partial\Omega^i}\right)[\psi^i] + \zeta^i$ is real analytic. Finally using hypothesis \eqref{realanalhp} and again the fact that the composition of real analytic functions is real analytic, we conclude that the map from $]-\epsilon',\epsilon[ \times \COi$ to $C^{0,\alpha}(\partial\Omega^i)$ which takes $(\epsilon,\psi^i)$ to
\[
G\left(\epsilon,\cdot,\epsilon\left(\frac{1}{2}I + W_{\partial\Omega^i}\right)[\psi^i](\cdot) + \zeta^i\right) = \mathcal{N}_G \left(\epsilon, \epsilon\left(\frac{1}{2}I + W_{\partial\Omega^i}\right)[\psi^i] +\zeta^i \right)
\]
is real analytic. 

The proof of the proposition is now complete.
\end{proof}

In order to analyse problem  \eqref{princeq} for $\epsilon>0$ close to $0$, and thus equation \eqref{Me=0.e1} for $\epsilon>0$ close to $0$, we need to consider \eqref{Me=0.e1} at the singular value $\epsilon=0$. Then, by the definition of $M$, by a straightforward computation, and by Theorem \ref{limsysthm}, we deduce the following.

\begin{prop}\label{M0=0}
Let assumptions  \eqref{zetaicond}, \eqref{F1}, and \eqref{realanalhp} hold true. Then,   equation 
\[
M[0,\phi^o, \phi^i, \zeta, \psi^i] = (0,0,0)
\]
is equivalent to the limiting system \eqref{limsys} and has one and only one solution
\[
(\phi^o_0, \phi^i_0, \zeta_0, \psi^i_0) \in \COo \times \COi_0 \times \R \times \COi.
\]
\end{prop}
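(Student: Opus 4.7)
The plan is to carry out the substitution $\epsilon=0$ in each of the three components $M_1,M_2,M_3$ and verify, term by term, that the resulting equations coincide exactly with the three equations of the limiting system \eqref{limsys}. Once this equivalence is established, the existence and uniqueness of a solution will follow directly from Theorem \ref{limsysthm}.

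First, I inspect $M_1[0,\phi^o,\phi^i,\zeta,\psi^i]$. The second and third terms in $M_1$ carry the prefactors $\epsilon^{n-1}$ and $\epsilon^{n-2}$ respectively, and since $n\ge 3$ both prefactors vanish at $\epsilon=0$. Hence $M_1[0,\phi^o,\phi^i,\zeta,\psi^i] = (\tfrac{1}{2}I+W_{\partial\Omega^o})[\phi^o]$, which matches the first equation of \eqref{limsys}. Next, in $M_2[0,\phi^o,\phi^i,\zeta,\psi^i]$, the term $\epsilon\tilde{u}^o(\epsilon,t)$ and the remainder term $\epsilon\tilde{F}(\epsilon,t,\zeta^i,\cdot)$ both vanish at $\epsilon=0$, while $w^+_{\Omega^o}[\phi^o](\epsilon t)$ at $\epsilon=0$ reduces to the constant $w^+_{\Omega^o}[\phi^o](0)$. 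The remaining terms coincide with the left-hand side of the second equation of \eqref{limsys}. Finally, for $M_3[0,\phi^o,\phi^i,\zeta,\psi^i]$, the contribution $\epsilon\nabla w^+_{\Omega^o}[\phi^o](\epsilon t)$ vanishes at $\epsilon=0$, the argument $\nabla u^o(\epsilon t)$ evaluates to $\nabla u^o(0)$, and the last nonlinear term reduces to $G(0,t,\zeta^i)$ since the inner argument $\epsilon(\tfrac{1}{2}I+W_{\partial\Omega^i})[\psi^i](t)+\zeta^i$ collapses to $\zeta^i$. This reproduces the third equation of \eqref{limsys}.

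Having verified the equivalence $M[0,\phi^o,\phi^i,\zeta,\psi^i]=(0,0,0) \iff \eqref{limsys}$, I invoke Theorem \ref{limsysthm}, which asserts that the limiting system has a unique solution $(\phi^o_0,\phi^i_0,\zeta_0,\psi^i_0)\in\COo\times\COi_0\times\R\times\COi$. This provides both existence and uniqueness for $M[0,\cdot,\cdot,\cdot,\cdot]=(0,0,0)$ and concludes the proof.

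I do not expect any conceptual obstacle here: the proof is purely a matter of inspecting that the terms carrying positive powers of $\epsilon$ drop out, while the terms involving $\nabla u^o(\epsilon t)$, $\tilde{u}^o(\epsilon,t)$, $w^+_{\Omega^o}[\phi^o](\epsilon t)$ and $\tilde{F}$ are handled by their respective definitions. The one point requiring a small check is that $n\ge 3$ ensures $\epsilon^{n-2}\zeta S_n$ vanishes at $\epsilon=0$ (so that no residual contribution survives in $M_1$), which is precisely the standing hypothesis on the dimension.
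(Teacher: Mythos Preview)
Your proof is correct and follows exactly the approach indicated in the paper, which merely states that the result follows ``by the definition of $M$, by a straightforward computation, and by Theorem \ref{limsysthm}.'' You have simply supplied the details of that straightforward computation---setting $\epsilon=0$ in each component of $M$, tracking which terms vanish, and then invoking Theorem \ref{limsysthm} for existence and uniqueness.
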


Finally, we have the following Lemma \ref{dMiso} concerning the partial differential of $M$ with respect to $(\phi^o, \phi^i, \zeta, \psi^i)$ evaluated at $(0,\phi^o_0, \phi^i_0, \zeta_0, \psi^i_0)$.

\begin{lemma}\label{dMiso}
Let assumptions  \eqref{zetaicond}, \eqref{F1}, and \eqref{realanalhp} hold true. Then, the partial differential of $M$ with respect to $(\phi^o, \phi^i, \zeta, \psi^i)$ evaluated at $(0,\phi^o_0, \phi^i_0, \zeta_0, \psi^i_0)$, which we denote by
\begin{equation}\label{dMiso.eq1}
\partial_{(\phi^o,\phi^i,\zeta,\psi^i)} M[0,\phi^o_0, \phi^i_0, \zeta_0, \psi^i_0]\,,
\end{equation}
is an isomorphism from $\COo \times \COi_0 \times \R \times \COi$ to $\COo \times \COi \times C^{0,\alpha}(\partial\Omega^i)$.
\end{lemma}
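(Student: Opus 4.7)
The plan is to compute the partial differential \eqref{dMiso.eq1} explicitly, observe that it decouples into a simple equation for $\bar\phi^o$ plus an exterior transmission problem for the natural harmonic ansatz associated with $(\bar\phi^i,\bar\zeta,\bar\psi^i)$, and then apply in sequence Lemma \ref{limsyslem}, Lemma \ref{isolem2}, and the isomorphism properties of the relevant boundary integral operators given by Theorems \ref{fredalt} and \ref{frediso}.

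First I would compute the three components of the differential, noting that $n\ge 3$ makes $\epsilon^{n-1},\epsilon^{n-2}$ vanish at $\epsilon=0$ and that the explicit $\epsilon$'s in $\epsilon\tilde u^o$, $\epsilon\tilde F$, and in the last slot of $G$ kill those contributions at the base point. Setting $\lambda\equiv(\partial_\zeta F)(0,\cdot,\zeta^i)$, which is a positive real constant by assumption \eqref{zetaicond}, the differential applied to an increment $(\bar\phi^o,\bar\phi^i,\bar\zeta,\bar\psi^i)$ reads
\begin{align*}
&\bigl(\tfrac12 I+W_{\partial\Omega^o}\bigr)[\bar\phi^o],\\
&\bigl(-\tfrac12 I+W_{\partial\Omega^i}\bigr)[\bar\phi^i]+\bar\zeta\,S_n+w^+_{\Omega^o}[\bar\phi^o](0)-\lambda\bigl(\tfrac12 I+W_{\partial\Omega^i}\bigr)[\bar\psi^i],\\
&\nu_{\Omega^i}\cdot\nabla\bigl(w^-_{\Omega^i}[\bar\phi^i]+\bar\zeta\,S_n-w^+_{\Omega^i}[\bar\psi^i]\bigr).
\end{align*}
Given a right-hand side $(f^o,f^i,g)\in\COo\times\COi\times C^{0,\alpha}(\partial\Omega^i)$, the first equation is decoupled: since $\R^n\setminus\overline{\Omega^o}$ is connected and unbounded, Theorem \ref{fredalt}(i) gives $\mathrm{Ker}(\tfrac12 I+W_{\partial\Omega^o})=\{0\}$ and Theorem \ref{frediso}(i) then makes it an isomorphism on $\COo$, so $\bar\phi^o$ is uniquely determined and the constant $c_0\equiv w^+_{\Omega^o}[\bar\phi^o](0)$ is known.

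The heart of the argument is recasting the remaining two equations as the exterior transmission problem \eqref{aux}. I would introduce the ansatz
\begin{equation*}
v^-\equiv w^-_{\Omega^i}[\bar\phi^i]+\bar\zeta\,S_n\ \text{on }\R^n\setminus\Omega^i,\qquad v^+\equiv w^+_{\Omega^i}[\bar\psi^i]\ \text{on }\overline{\Omega^i}.
\end{equation*}
By the jump formulas of Theorem \ref{sdp}(v), the second equation becomes $v^-=\lambda v^++(f^i-c_0)$ on $\partial\Omega^i$ and the third becomes $\nu_{\Omega^i}\cdot\nabla v^--\nu_{\Omega^i}\cdot\nabla v^+=g$ on $\partial\Omega^i$. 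Moreover $w^-_{\Omega^i}[\bar\phi^i]$ and $S_n$ both vanish at infinity (using $n\ge 3$), so $v^-(x)\to 0$ as $|x|\to\infty$. This is exactly problem \eqref{aux} with $\lambda>0$, $f_1=f^i-c_0\in\COi$ and $f_2=g\in C^{0,\alpha}(\partial\Omega^i)$, so Lemma \ref{limsyslem} supplies a unique solution $(v^-,v^+)\in C^{1,\alpha}(\R^n\setminus\Omega^i)\times C^{1,\alpha}(\overline{\Omega^i})$.

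Finally I would recover $(\bar\phi^i,\bar\zeta,\bar\psi^i)$ from $(v^-,v^+)$. The trace equation $v^+|_{\partial\Omega^i}=(\tfrac12 I+W_{\partial\Omega^i})[\bar\psi^i]$ yields a unique $\bar\psi^i\in\COi$ (Theorems \ref{fredalt}(i) and \ref{frediso}(i) applied to $\Omega^i$, whose exterior is connected and unbounded), while $v^-|_{\partial\Omega^i}=J[\bar\phi^i,\bar\zeta]$ in the notation of Lemma \ref{isolem2} yields a unique pair $(\bar\phi^i,\bar\zeta)\in\COi_0\times\R$. Uniqueness of the interior Dirichlet problem in $\Omega^i$ and of the exterior Dirichlet problem in $\R^n\setminus\overline{\Omega^i}$ (the latter using decay at infinity, hence $n\ge 3$) then forces $w^+_{\Omega^i}[\bar\psi^i]=v^+$ in $\Omega^i$ and $w^-_{\Omega^i}[\bar\phi^i]+\bar\zeta S_n=v^-$ in $\R^n\setminus\overline{\Omega^i}$, so the normal-derivative equation for $(\bar\phi^i,\bar\zeta,\bar\psi^i)$ is automatically fulfilled on $\partial\Omega^i$. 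Combined with the continuity of the differential coming from Proposition \ref{Mana}, the open mapping theorem upgrades the bijection to an isomorphism. The only non-routine step is really the identification of the correct ansatz $v^\pm$; once this is made, all remaining work is already packaged into Lemmas \ref{limsyslem} and \ref{isolem2}.
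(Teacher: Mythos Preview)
Your proof is correct and follows essentially the same route as the paper's: compute the differential explicitly, solve the decoupled equation for $\bar\phi^o$ via Theorems \ref{fredalt}(i) and \ref{frediso}(i), recast the remaining two equations as the exterior transmission problem \eqref{aux} via the ansatz $v^\pm$, invoke Lemma \ref{limsyslem}, and then recover $(\bar\phi^i,\bar\zeta)$ and $\bar\psi^i$ through Lemma \ref{isolem2} and the invertibility of $\tfrac12 I+W_{\partial\Omega^i}$, concluding with the open mapping theorem. The only cosmetic difference is that you solve for $\bar\phi^o$ first (which is slightly cleaner, since $w^+_{\Omega^o}[\bar\phi^o](0)$ is needed as data in the transmission problem), whereas the paper treats the transmission system first and $\bar\phi^o$ last.
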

\begin{proof}
By standard calculus in Banach spaces one verifies that the partial differential \eqref{dMiso.eq1} is the linear and continuous operator delivered by
\begin{align*}
&\partial_{(\phi^o,\phi^i,\zeta,\psi^i)} M_1[0,\phi^o_0, \phi^i_0, \zeta_0, \psi^i_0].(\tilde{\phi^o},\tilde{\phi^i},\tilde{\zeta},\tilde{\psi^i}) (x) 
\\
& \qquad = \left(\frac{1}{2}I + W_{\partial\Omega^o}\right)[\tilde{\phi^o}](x) & \forall x \in \partial \Omega^o,
\\
& 
\partial_{(\phi^o,\phi^i,\zeta,\psi^i)} M_2[0,\phi^o_0, \phi^i_0, \zeta_0, \psi^i_0] . (\tilde{\phi^o},\tilde{\phi^i},\tilde{\zeta},\tilde{\psi^i}) (t)
\\
&
 \qquad= \left(-\frac{1}{2}I + W_{\partial\Omega^i}\right)[\tilde{\phi^i}](t) + \tilde\zeta\, S_n(t) + w^+_{\Omega^o}[\tilde{\phi^o}](0) 
\\
&
\qquad \quad- (\partial_\zeta F) (0,t,\zeta^i)  \left(\frac{1}{2}I + W_{\partial\Omega^i}\right)[\tilde{\psi^i}](t) & \forall t \in \partial\Omega^i\,, 
\\
& \partial_{(\phi^o,\phi^i,\zeta,\psi^i)} M_3[0,\phi^o_0, \phi^i_0, \zeta_0, \psi^i_0]. (\tilde{\phi^o},\tilde{\phi^i},\tilde{\zeta},\tilde{\psi^i}) (t) 
\\
&\qquad =  \nu_{\Omega^i}(t) \left( \nabla w^-_{\Omega^i}[\tilde{\phi^i}](t) +   \tilde\zeta\, \nabla S_n(t) 
 - \nabla w^+_{\Omega^i}[\tilde{\psi^i}](t) \right) & \forall t \in \partial\Omega^i,
\end{align*}
for all $(\tilde{\phi^o},\tilde{\phi^i},\tilde{\zeta},\tilde{\psi^i}) \in \COo \times \COi_0 \times \R \times \COi$. Then, to prove that $\partial_{(\phi^o,\phi^i,\zeta,\psi^i)} M[0,\phi^o_0, \phi^i_0, \zeta_0, \psi^i_0]$ is an isomorphism of Banach spaces it will suffice to prove that it is a bijection and then apply the open mapping theorem.  So let $(g^i,h^i,h^o) \in \COi \times \COi \times \COo$. We have to prove that there exists a unique quadruple $(\bar{\phi^o},\bar{\phi^i},\bar{\zeta},\bar{\psi^i}) \in \COo \times \COi_0 \times \R \times \COi$ such that
\begin{equation}\label{diffMhomo}
\partial_{(\phi^o,\phi^i,\zeta,\psi^i)} M[0,\phi^o_0, \phi^i_0, \zeta_0, \psi^i_0] .(\bar{\phi^o},\bar{\phi^i},\bar{\zeta},\bar{\psi^i}) = (g^i,h^i,h^o).
\end{equation}
The last two equations of \eqref{diffMhomo} written in full are
\begin{equation}\label{diffMhomo2comp}
\begin{aligned}
&\left(-\frac{1}{2}I + W_{\partial\Omega^i}\right)[\bar{\phi^i}](t) +\bar\zeta\, S_n(t) + w^+_{\Omega^o}[\bar{\phi^o}](0) 
\\
& \qquad\qquad\,- (\partial_\zeta F) (0,t,\zeta^i)  \left(\frac{1}{2}I + W_{\partial\Omega^i}\right)[\bar{\psi^i}](t) = g^i (t)\,,
\\
&\nu_{\Omega^i}(t) \cdot \left( \nabla w^-_{\Omega^i}[\bar{\phi^i}](t) + \bar\zeta\, \nabla S_n(t) - \nabla w^+_{\Omega^i}[\bar{\psi^i}](t) \right) = h^i(t)\,,
\end{aligned}
\end{equation}	
for all $t\in\partial\Omega^i$. Then, by Theorem \ref{sdp}$(v)$, one verifies that the triple $(\bar{\phi^i},\bar{\zeta},\bar{\psi^i}) \in  \COi_0 \times \R \times \COi$  is a solution of system \eqref{diffMhomo2comp} if and only if the pair  $(u^-,u^+)$ defined by
\begin{equation}\label{diffMhomosol}
\begin{aligned}
& u^- \equiv w^-_{\Omega^i}[\bar{\phi^i}] + \bar\zeta\, S_{n|\R^n\setminus\Omega^i} &&\quad \mbox{in }\R^n \setminus \Omega^i,
\\
& u^+ \equiv w^+_{\Omega^i}[\bar{\psi^i}] &&\quad \mbox{in } \overline{\Omega^i},
\end{aligned}
\end{equation}
is a solution of the transmission problem
\begin{equation}\label{diffMhomoexuni}
\begin{cases}
\Delta u^-=0 &\qquad \text{in } \R^n \setminus\Omega^i, 
\\
\Delta u^+=0 &\qquad \text{in } \Omega^i, 
\\
u^- =  (\partial_\zeta F) (0,\cdot \,,\zeta^i)\; u^+ - w^+_{\Omega^o}[\tilde{\phi^o}](0) + g^i &\qquad \text{on } \partial \Omega^i,
\\
\nu_{\Omega^i} \cdot \nabla u^- - \nu_{\Omega^i} \cdot \nabla u^+ = h^i & \qquad \text{on } \partial \Omega^i,
\\
\lim\limits_{x \rightarrow \infty} u^-(x) = 0\,.
\end{cases}
\end{equation}
By assumption \eqref{zetaicond} and by Lemma \ref{limsyslem}, the solution $(u^-,u^+)$ of problem \eqref{diffMhomoexuni} exists and is unique.  Then the existence and uniqueness of   $(\bar{\phi^i},\bar{\zeta}) \in \COi_0 \times \R$ follow by the first equation of \eqref{diffMhomosol}, by the uniqueness of the solution of the exterior Dirichlet problem, by the jump relations of Theorem \ref{sdp}$(v)$, and by Lemma \ref{isolem2}. The existence and uniqueness of $\bar{\psi^i} \in \COi$ can be deduced by the second equation of \eqref{diffMhomosol}, by the  uniqueness of the solution of the  Dirichlet problem, by Theorem \ref{sdp}$(v)$, by Theorem \ref{fredalt}$(i)$ and by Theorem \ref{frediso}$(i)$. Finally, to prove that $\bar{\phi^o} $ exists and is unique we observe that the first equation of \eqref{diffMhomo} is 
\[
\left(\frac{1}{2}I + W_{\partial\Omega^o}\right)[\bar{\phi^o}] = h^o
\]
and by Theorem \ref{fredalt}$(i)$ and by Theorem \ref{frediso}$(i)$ the operator $\frac{1}{2}I + W_{\partial\Omega^o}$ is invertible from $\COo$ into itself. 
\end{proof}

We are now ready to show that there is a real analytic family of solutions of equation \eqref{Me=0.e1}. 

\begin{teo}\label{ThmM}
Let assumptions  \eqref{zetaicond}, \eqref{F1}, and \eqref{realanalhp} hold true. Then there exist $\epsilon' \in ]0,\epsilon_0[$, an open neighborhood $U_0$ of $(\phi^o_0, \phi^i_0, \zeta_0, \psi^i_0)$ in $\COo \times \COi_0 \times \R \times \COi$, and a real analytic map 
\begin{equation*}
(\Phi^o[\cdot],\Phi^i[\cdot],Z[\cdot],\Psi^i[\cdot]):\;]-\epsilon',\epsilon'[\to U_0
\end{equation*}
such that  the set of zeros of $M$ in $]-\epsilon',\epsilon'[ \times U_0$ coincides with the graph of the function $(\Phi^o[\cdot],\Phi^i[\cdot],Z[\cdot],\Psi^i[\cdot] )$. In particular,
\begin{equation}\label{ThmM.eq1}
(\Phi^o[0],\Phi^i[0],Z[0],\Psi^i[0]) = (\phi^o_0, \phi^i_0, \zeta_0, \psi^i_0).
\end{equation}
\end{teo}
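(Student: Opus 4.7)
The plan is to invoke the implicit function theorem for real analytic maps between (real) Banach spaces, in the version stated for instance in Deimling \cite[Thm.~15.3]{De85}. All three hypotheses required by that theorem have essentially been assembled in the preceding results of this section, so the proof should be a short assembly rather than a new calculation. The key observation is that $M$ is defined on the open subset $]-\epsilon_0,\epsilon_0[ \times \COo \times \COi_0 \times \R \times \COi$ of a product Banach space, takes values in the Banach space $\COo \times \COi \times C^{0,\alpha}(\partial\Omega^i)$, and we wish to solve $M=0$ for $(\phi^o,\phi^i,\zeta,\psi^i)$ as a function of the parameter $\epsilon$ near $\epsilon=0$.

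First, I would cite Proposition \ref{Mana}, which guarantees that $M$ is real analytic on its domain. Next, I would invoke Proposition \ref{M0=0}, which identifies $M[0,\phi^o_0,\phi^i_0,\zeta_0,\psi^i_0]=(0,0,0)$ as the (unique) solution of the limiting system of Section \ref{limitingsystem}; this supplies the base point at which the implicit function theorem is to be applied. Finally, the nondegeneracy hypothesis — that the partial differential
\[
\partial_{(\phi^o,\phi^i,\zeta,\psi^i)} M[0,\phi^o_0,\phi^i_0,\zeta_0,\psi^i_0]
\]
be an isomorphism from $\COo \times \COi_0 \times \R \times \COi$ onto $\COo \times \COi \times C^{0,\alpha}(\partial\Omega^i)$ — is exactly the content of Lemma \ref{dMiso}.

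With these three ingredients in hand, the implicit function theorem produces an $\epsilon' \in ]0,\epsilon_0[$, an open neighborhood $U_0$ of $(\phi^o_0,\phi^i_0,\zeta_0,\psi^i_0)$ in $\COo \times \COi_0 \times \R \times \COi$, and a real analytic map $(\Phi^o[\cdot],\Phi^i[\cdot],Z[\cdot],\Psi^i[\cdot]):\,]-\epsilon',\epsilon'[\to U_0$ whose graph coincides with the zero set of $M$ in $]-\epsilon',\epsilon'[\times U_0$. The normalization \eqref{ThmM.eq1} is then automatic from the uniqueness clause of the theorem combined with the fact that $(\phi^o_0,\phi^i_0,\zeta_0,\psi^i_0)$ is a zero of $M[0,\cdot,\cdot,\cdot,\cdot]$.

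Since Propositions \ref{Mana}, \ref{M0=0}, and Lemma \ref{dMiso} have already done all the substantial work, there is no genuine obstacle at this step; the only care needed is in pairing up the correct Banach spaces and verifying that all the regularity and isomorphism hypotheses of the analytic implicit function theorem are met in the formulation used. In particular, the shrinking from $\epsilon_0$ to $\epsilon'$ is built into the conclusion of the implicit function theorem and requires no further argument.
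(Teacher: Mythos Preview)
Your proposal is correct and follows exactly the same approach as the paper: invoke Proposition \ref{Mana} for real analyticity of $M$, Lemma \ref{dMiso} for the invertibility of the partial differential, and then apply the analytic implicit function theorem of Deimling \cite[Thm.~15.3]{De85}, with \eqref{ThmM.eq1} following from Proposition \ref{M0=0}. The paper's own proof is even more terse than yours but cites precisely the same three results in the same roles.
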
 

\begin{proof}
It follows by  Proposition \ref{Mana}, by Lemma \ref{dMiso}, and by the implicit function theorem for real analytic functions (see, for example, Deimling \cite[Thm.~15.3]{De85}). The validity of \eqref{ThmM.eq1} is a consequence of Proposition \ref{M0=0}.
\end{proof}


\section{Real analytic representation of the family of solution}\label{analrapprfamsol}

We are now ready to exhibit  a family of solutions of problem \eqref{princeq} for $\epsilon$ sufficiently small and describe its asymptotic behaviour in terms of real analytic functions of $\epsilon$. 

\begin{defin}\label{defstandardsolution}
Let assumptions  \eqref{zetaicond}, \eqref{F1}, and \eqref{realanalhp} hold true. Let $\epsilon'$ and $(\Phi^o[\cdot],\Phi^i[\cdot],Z[\cdot],\Psi^i[\cdot])$ be as in Theorem \ref{ThmM}. 
Then, for all $\epsilon\in]0,\epsilon'[$ we set	
\[
\begin{aligned}
& u^o_\epsilon(x) \equiv U^o_\epsilon[\Phi^o[\epsilon] , \Phi^i[\epsilon], Z[\epsilon] , \Psi^i[\epsilon]](x)&&\forall x \in \overline{\Omega(\epsilon)}\,,\\
& u^i_\epsilon(x) \equiv U^i_\epsilon[\Phi^o[\epsilon] , \Phi^i[\epsilon], Z[\epsilon], \Psi^i[\epsilon]](x)	&&\forall x \in \epsilon\overline{\Omega^i}\,,
\end{aligned}
\]
with $U^o_\epsilon[\cdot,\cdot,\cdot,\cdot]$ and $U^i_\epsilon[\cdot,\cdot,\cdot,\cdot]$ defined as in \eqref{rappsol}. 
\end{defin}

As a consequence of Propositions \ref{equivsol} and \ref{Me=0} and of Theorem \ref{ThmM} we have the following.

\begin{teo}\label{uesol}
Under assumptions  \eqref{zetaicond}, \eqref{F1}, and \eqref{realanalhp}, the pair of functions
\[
(u^o_\epsilon,u^i_\epsilon) \in C^{1,\alpha}(\overline{\Omega(\epsilon)}) \times C^{1,\alpha}(\overline{\epsilon\Omega^i})
\]
is a solution of \eqref{princeq} for all $\epsilon\in]0,\epsilon'[$.
\end{teo}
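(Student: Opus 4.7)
The plan is to chain together the equivalences already developed in the preceding sections: Theorem \ref{ThmM} furnishes analytic densities annihilating $M$; Proposition \ref{Me=0} translates $M=0$ into the integral system \eqref{equivsol3}--\eqref{equivsol2}; Proposition \ref{equivsol} translates that integral system into a genuine solution of \eqref{princeq}; and Proposition \ref{rappresentsol} guarantees the right regularity of the layer potential representation. Definition \ref{defstandardsolution} then names the resulting pair $(u^o_\epsilon,u^i_\epsilon)$, closing the circle.

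Concretely, I would proceed as follows. Fix $\epsilon \in ]0,\epsilon'[$. By Theorem \ref{ThmM} the quadruple $(\Phi^o[\epsilon],\Phi^i[\epsilon],Z[\epsilon],\Psi^i[\epsilon])$ lies in $\COo\times\COi_0\times\R\times\COi$ and satisfies
\[
M[\epsilon,\Phi^o[\epsilon],\Phi^i[\epsilon],Z[\epsilon],\Psi^i[\epsilon]]=(0,0,0).
\]
Since $\epsilon>0$, Proposition \ref{Me=0} is applicable and yields that this quadruple solves the integral system \eqref{equivsol3}--\eqref{equivsol2}. Proposition \ref{equivsol} then tells us that the pair
\[
\bigl(U^o_\epsilon[\Phi^o[\epsilon],\Phi^i[\epsilon],Z[\epsilon],\Psi^i[\epsilon]],\; U^i_\epsilon[\Phi^o[\epsilon],\Phi^i[\epsilon],Z[\epsilon],\Psi^i[\epsilon]]\bigr),
\]
defined through the layer potential representation \eqref{rappsol}, is a solution of the transmission problem \eqref{princeq}. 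By Definition \ref{defstandardsolution} this pair is precisely $(u^o_\epsilon,u^i_\epsilon)$, which is what we wanted.

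For the regularity assertion, the target spaces of $U^o_\epsilon[\cdot,\cdot,\cdot,\cdot]$ and $U^i_\epsilon[\cdot,\cdot,\cdot,\cdot]$ specified in Proposition \ref{rappresentsol} are $C^{1,\alpha}_{\mathrm{harm}}(\overline{\Omega(\epsilon)})$ and $\COeiharm$ respectively, both of which sit inside the function spaces $C^{1,\alpha}(\overline{\Omega(\epsilon)})$ and $C^{1,\alpha}(\overline{\epsilon\Omega^i})$ appearing in the statement, so the required regularity is built into the construction and nothing additional must be verified.

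There is no genuine obstacle in this argument: every nontrivial ingredient has already been assembled. The only detail worth flagging is the restriction to $\epsilon\in]0,\epsilon'[$. Theorem \ref{ThmM} actually supplies zeros of $M$ on the two-sided neighbourhood $]-\epsilon',\epsilon'[$, but both the original problem \eqref{princeq} and the equivalence in Proposition \ref{Me=0} only make sense for $\epsilon>0$ (because for $\epsilon\le 0$ the perforated domain $\Omega(\epsilon)$ and the inclusion $\epsilon\Omega^i$ degenerate or flip orientation, and the change of variable $x\mapsto x/\epsilon$ underlying \eqref{rappsol} breaks down). The extra analytic extension provided by the implicit function theorem is kept in reserve for the dependence-on-$\epsilon$ result (Theorem \ref{uanal}) and plays no role in the existence statement of Theorem \ref{uesol}.
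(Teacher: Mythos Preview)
Your proof is correct and follows exactly the approach the paper takes: the paper simply records Theorem \ref{uesol} as a consequence of Propositions \ref{equivsol} and \ref{Me=0} and Theorem \ref{ThmM}, and your argument spells out precisely that chain of implications (with the harmless addition of citing Proposition \ref{rappresentsol} for the regularity and Definition \ref{defstandardsolution} for the naming).
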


We now verify that the map which takes $\epsilon$ to (suitable restrictions of) the pair of functions $(u^o_\epsilon,u^i_\epsilon)$ admits a real analytic continuation in a neighborhood of $\epsilon=0$.

\begin{teo}\label{uanal}
Let assumptions  \eqref{zetaicond}, \eqref{F1}, and \eqref{realanalhp} hold true. Then the following statements hold.
\begin{enumerate}
\item[(i)] There exists a real analytic map 
\[
U^i_m:\;]-\epsilon',\epsilon'[\to C^{1,\alpha}(\overline{\Omega^i})
\]
such that
\[
u^i_\epsilon(\epsilon t) =  \zeta^i + \epsilon U^i_m[\epsilon](t) \qquad \forall t \in \overline{\Omega^i}
\]
for all $\epsilon \in ]0,\epsilon'[$.

\item[(ii)] Let $\Omega_M$ be a bounded open subset of $\Omega^o \setminus \{0\}$ such that $0 \notin \overline{\Omega_M}$. Let $\epsilon_M \in ]0,\epsilon'[$ be such that 
\[
\overline{\Omega_M} \cap \epsilon\overline{\Omega^i}=\emptyset \qquad \forall\epsilon \in ]-\epsilon_M,\epsilon_M[\,.
\]
Then there exists  a real analytic map 
\[
U^o_M:\;]-\epsilon_M,\epsilon_M[ \to C^{1,\alpha}(\overline{\Omega_M})
\] 
such that 
\[
u^o_\epsilon(x) = {u}^o(x) + \epsilon U^o_M[\epsilon ](x) \qquad \forall x \in \overline{\Omega_M}
\]
for all $\epsilon \in ]0,\epsilon_M[$.

\item[(iii)] Let $\Omega_m$ be a bounded open subset of $\R^n\setminus\overline{\Omega^i}$. Let $\epsilon_m \in ]0,\epsilon'[$ be such that 
\begin{equation*}
\epsilon \overline{\Omega_m} \subseteq \Omega^o \qquad \forall \epsilon \in ]-\epsilon_m,\epsilon_m[.
\end{equation*}
Then there exists a real analytic map
\[
U^o_m :\;]-\epsilon_m,\epsilon_m[\to C^{1,\alpha}(\overline{\Omega_m})
\]
such that 
\[
u^o_\epsilon(\epsilon t) = u^o(0) + \epsilon U^o_m[\epsilon](t) \qquad \forall t \in \overline{\Omega_m}
\]
for all $\epsilon \in ]0,\epsilon_m[$.
\end{enumerate}
\end{teo}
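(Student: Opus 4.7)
The plan is to substitute the representation of $(u^o_\epsilon,u^i_\epsilon)$ from Definition \ref{defstandardsolution} into each of the three formulas, use the change of variable $y=\epsilon s$ in the integrals over $\epsilon\partial\Omega^i$, and exploit the homogeneity relations $S_n(\lambda x)=\lambda^{2-n}S_n(x)$ and $\nabla S_n(\lambda x)=\lambda^{1-n}\nabla S_n(x)$ to rewrite everything in terms of operators acting on densities on $\partial\Omega^i$ and $\partial\Omega^o$. Once this bookkeeping is done, real analyticity with respect to $\epsilon$ follows from Theorem \ref{ThmM} (which gives the real analyticity of $\Phi^o[\cdot]$, $\Phi^i[\cdot]$, $Z[\cdot]$, $\Psi^i[\cdot]$), from the continuity properties of the layer potentials in Theorem \ref{sdp}, and from known results on integral operators with real analytic kernels (cf.~Lanza de Cristoforis and Musolino \cite{LaMu13}).

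For (i) I simply plug $x=\epsilon t$ into the second line of \eqref{rappsol} and perform the rescaling in the double layer potential: the change of variable produces $u^i_\epsilon(\epsilon t)=\zeta^i+\epsilon\, w^+_{\Omega^i}[\Psi^i[\epsilon]](t)$, so I set $U^i_m[\epsilon]\equiv w^+_{\Omega^i}[\Psi^i[\epsilon]]$, which is the composition of the real analytic map $\Psi^i[\cdot]$ with the continuous linear operator $w^+_{\Omega^i}$ from $\COi$ to $C^{1,\alpha}(\overline{\Omega^i})$. For (ii) I substitute the first line of \eqref{rappsol} and write $u^o_\epsilon(x)-u^o(x)=\epsilon\,U^o_M[\epsilon](x)$ with
\[
U^o_M[\epsilon](x)=w^+_{\Omega^o}[\Phi^o[\epsilon]](x)-\int_{\partial\Omega^i}\!\nu_{\Omega^i}(s)\cdot\nabla S_n(x-\epsilon s)\,\Phi^i[\epsilon](s)\,\epsilon^{n-1}\,d\sigma_s+\epsilon^{n-2}Z[\epsilon]\,S_n(x).
\]
Since $\overline{\Omega_M}\cap\epsilon\overline{\Omega^i}=\emptyset$ for $\epsilon\in\,]-\epsilon_M,\epsilon_M[$, the kernel $\nabla S_n(x-\epsilon s)$ is real analytic in $(\epsilon,x,s)$ on the relevant set, so the integral term is real analytic in $\epsilon$ with values in $C^{1,\alpha}(\overline{\Omega_M})$; the first term is real analytic via the continuity of $w^+_{\Omega^o}$ composed with restriction, and the last term is trivially real analytic since $S_n$ is analytic on $\overline{\Omega_M}$ (which avoids the origin).

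For (iii) I compose with the rescaling $t\mapsto\epsilon t$. The term $u^o(\epsilon t)$ is handled exactly as in Lemma \ref{tayuo}, giving $u^o(\epsilon t)=u^o(0)+\epsilon\int_0^1 t\cdot\nabla u^o(\tau\epsilon t)\,d\tau$, with the integral real analytic in $\epsilon$ with values in $C^{1,\alpha}(\overline{\Omega_m})$ by the same composition-operator argument used there (now applied on $\overline{\Omega_m}$, which is admissible because $\epsilon\overline{\Omega_m}\subset\Omega^o$). The double layer potential $w^+_{\Omega^o}[\Phi^o[\epsilon]](\epsilon t)$ is real analytic in $\epsilon$ by the result on integral operators with real analytic kernels. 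The key step is to apply the homogeneity identities to obtain $\epsilon\, w^-_{\epsilon\Omega^i}[\Phi^i[\epsilon](\cdot/\epsilon)](\epsilon t)=\epsilon\,w^-_{\Omega^i}[\Phi^i[\epsilon]](t)$ and $\epsilon^{n-1}Z[\epsilon]\,S_n(\epsilon t)=\epsilon\,Z[\epsilon]\,S_n(t)$; here the assumption $n\ge 3$ ensures the exponents conspire to leave a single factor of $\epsilon$. Since $t\in\overline{\Omega_m}\subset\R^n\setminus\overline{\Omega^i}$, the map $\Phi^i[\epsilon]\mapsto w^-_{\Omega^i}[\Phi^i[\epsilon]]_{|\overline{\Omega_m}}$ is linear and continuous, hence real analytic. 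Collecting the four pieces produces the desired $U^o_m[\epsilon]$.

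The main subtlety is the careful tracking of powers of $\epsilon$ in (iii), which relies on the homogeneity of $S_n$ and $\nabla S_n$ together with $n\ge 3$ to produce a common factor of $\epsilon$ after extraction of $u^o(0)$; everything else is a bookkeeping exercise combined with the real analytic continuation of $(\Phi^o[\cdot],\Phi^i[\cdot],Z[\cdot],\Psi^i[\cdot])$ given by Theorem \ref{ThmM} and the standard analyticity results for layer potentials and integral operators with analytic kernels.
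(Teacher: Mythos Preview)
Your proposal is correct and follows essentially the same route as the paper: substitute the layer-potential representation from \eqref{rappsol}, rescale via $y=\epsilon s$ and the homogeneity of $S_n$ and $\nabla S_n$, and then read off real analyticity from Theorem \ref{ThmM} together with the mapping properties of layer potentials and of integral operators with real analytic kernels. The only noticeable variation is in part (iii): the paper handles $u^o(\epsilon t)-u^o(0)$ by observing that $\epsilon\mapsto u^o(\epsilon\cdot)-u^o(0)$ is real analytic and vanishes at $\epsilon=0$, hence the quotient by $\epsilon$ extends real analytically, whereas you write the explicit first-order Taylor remainder $\epsilon\int_0^1 t\cdot\nabla u^o(\tau\epsilon t)\,d\tau$ in the spirit of Lemma \ref{tayuo}; both arguments are valid and lead to the same $U^o_m$. (A small remark: the cancellation of exponents in $\epsilon^{n-1}S_n(\epsilon t)=\epsilon\,S_n(t)$ and in the double-layer rescaling is a purely algebraic consequence of the homogeneity degrees and does not actually require $n\ge 3$.)
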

\begin{proof}
We first prove $(i)$. By \eqref{rappsol} and by Definition \ref{defstandardsolution} we have 
\begin{equation*}
u^i_\epsilon(x) = \epsilon w^+_{\epsilon\Omega^i}\left[\Psi^i[\epsilon]\left(\frac{\cdot}{\epsilon}\right)\right](x) + \zeta^i \qquad \forall x \in \epsilon \overline{\Omega^i},
\end{equation*}
for all $\epsilon \in ]0,\epsilon'[$.
Then, by a computation based on the theorem of change of variable in integrals and on the homogeneity of $\nabla S_n$ we obtain that
\begin{equation*}
\begin{split}
u^i_\epsilon(\epsilon t) & = \epsilon w^+_{\epsilon\Omega^i}\left[\Psi^i[\epsilon]\left(\frac{\cdot}{\epsilon}\right)\right](\epsilon t) + \zeta^i 
\\ & =
- \epsilon \, \epsilon^{n-1} \int_{\partial\Omega^i} \nu_{\Omega^i}(s) \cdot \nabla S_n(\epsilon t - \epsilon s) \Psi^i[\epsilon](s) \,d\sigma_s + \zeta^i 
\\ & = -\epsilon \int_{\partial\Omega^i} \nu_{\Omega^i}(s) \cdot \nabla S_n(t - s) \Psi^i[\epsilon](s) \,d\sigma_s + \zeta^i 
\\ & = \epsilon w^+_{\Omega^i}[\Psi^i[\epsilon]](t) + \zeta^i 
\qquad \forall t \in \Omega^i,
\end{split}
\end{equation*}
for all $\epsilon \in ]0,\epsilon'[$.
Then it is natural to take 
\[
U^i_m[\epsilon] \equiv w^+_{\Omega^i}[\Psi^i[\epsilon]] \qquad \forall \epsilon \in ]-\epsilon',\epsilon'[\,.
\]
Since $w^+_{\Omega^i}[\cdot]$ is linear and continuous from $\COi$ to $C^{1,\alpha}(\overline{\Omega^i})$ (cf.~Theorem \ref{sdp}$(vii)$) and $\Psi^i[\cdot]$ is real analytic (cf.~Theorem \ref{ThmM}),  we conclude that the map $U^i_m$  is real analytic. The validity of (i) is proved.
	
We now proceed with $(ii)$. By \eqref{rappsol} and by Definition \ref{defstandardsolution} we have	
\begin{align*}
 u^o_\epsilon(x) = {u}^o(x) + \epsilon w^+_{\Omega^o}[\Phi^o[\epsilon] ](x) + \epsilon w^-_{\epsilon\Omega^i}\left[\Phi^i[\epsilon]\left(\frac{\cdot}{\epsilon}\right)\right](x)
 \\
 + \epsilon^{n-1} Z[\epsilon] \,  S_n(x) \quad \forall x\in\overline{\Omega(\epsilon)}
\end{align*}
for all $\epsilon\in]0,\epsilon'[$.
Then, by changing the variable of integration over $\epsilon\partial\Omega^i$ we obtain
\begin{align*}
 u^o_\epsilon(x) = {u}^o(x) + \epsilon w^+_{\Omega^o}[\Phi^o[\epsilon] ](x)- \epsilon \int_{\partial\Omega^i} \nu_{\Omega^i}(s) \cdot \nabla S_n(x - \epsilon s) \Phi^i[\epsilon](s) \, \epsilon^{n-1}\,d\sigma_s 
 \\
 + \epsilon^{n-1} Z[\epsilon]\, S_n(x) \quad \forall x \in \overline{\Omega(\epsilon)}
\end{align*}
for all $\epsilon\in]0,\epsilon'[$. Then it is natural to define
\begin{align*}
U^o_M [\epsilon](x) \equiv  w^+_{\Omega^o}[\Phi^o[\epsilon] ](x) - \epsilon^{n-1} \int_{\partial\Omega^i} \nu_{\Omega^i}(s) \cdot \nabla S_n(x - \epsilon s) \Phi^i[\epsilon](s) \,d\sigma_s  
\\
+ \epsilon^{n-2} Z[\epsilon] \, S_n(x)  \quad \forall x \in \overline{\Omega_M}
\end{align*}
for all $\epsilon\in]-\epsilon_M,\epsilon_M[$. 

Since $\Phi^o[\cdot]$ is real analytic (cf.~Theorem \ref{ThmM}), since $w^+_{\Omega^o}[\cdot]$ is linear and continuous from $\COo$ to $C^{1,\alpha}(\overline{\Omega^o})$ (cf.~Theorem \ref{sdp}$(vii)$), and since the restriction operator from $C^{1,\alpha}(\overline{\Omega^o})$ to $C^{1,\alpha}(\overline{\Omega_M})$ is linear and continuous, then the map from $]-\epsilon_M,\epsilon_M[$ to $C^{1,\alpha}(\overline{\Omega_M})$ which takes $\epsilon$ to $w^+_{\Omega^o}[\Phi^o[\epsilon]]$ is real analytic. 
Then, one considers the operator from $]-\epsilon_M,\epsilon_M[ \times L^1(\partial \Omega^i)$ to $C^{1,\alpha}(\overline{\Omega_M})$ which takes the pair $(\epsilon,f)$ to $\int_{\partial\Omega^i} \nu_{\Omega^i}(s) \cdot \nabla S_n(\cdot - \epsilon s) f(s) \,d\sigma_s$.
By the real analyticity of $S_n$ on $\R^n\setminus\{0\}$, by the fact that the integral does not display singularities (by hypothesis $\overline{\Omega_M} \cap \epsilon\overline{\Omega^i}=\emptyset$ for all $\epsilon \in ]-\epsilon_M,\epsilon_M[$), by the real analyticity of the map from $]-\epsilon_M,\epsilon_M[$ to $\COi_0$ which takes $\epsilon$ to $\Phi^i[\epsilon]$ (cf.~Theorem \ref{ThmM}) and since $\COi_0$ is linearly and continuously imbedded in $L^1(\partial \Omega^i)$, we conclude that the map from $]-\epsilon_M,\epsilon_M[$ to $C^{1,\alpha}(\overline{\Omega_M})$ which takes $\epsilon$ to $\epsilon^{n-1} \int_{\partial\Omega^i} \nu_{\Omega^i}(s) \cdot \nabla S_n(\cdot - \epsilon s) \Phi^i[\epsilon](s) \,d\sigma_s $ is real analytic (see Lanza de Cristoforis and Musolino \cite{LaMu13}).
Finally, by the real analyticity of $Z[\cdot]$ (cf.~Theorem \ref{ThmM}), one verifies that the map from $]-\epsilon_M,\epsilon_M[$ to $C^{1,\alpha}(\overline{\Omega_M})$ which takes $\epsilon$ to $\epsilon^{n-2} Z[\epsilon] \, S_n$ is real analytic.
Hence, one deduces the validity of $(ii)$.

Finally we prove $(iii)$. By \eqref{rappsol}, by Definition \ref{defstandardsolution}, by exploiting the homogeneity properties of $S_n$ and $\nabla S_n$, and by adding and subtracting the term $u^o(0)$, we obtain that
\begin{align*}
u^o_\epsilon(\epsilon t) = &\, u^o(0) + {u}^o(\epsilon t) - u^o(0) + \epsilon w^+_{\Omega^o}[\Phi^o[\epsilon] ](\epsilon t) 
\\&- \epsilon \int_{\partial\Omega^i} \nu_{\Omega^i}(s) \cdot \nabla S_n(t - s) \Phi^i[\epsilon](s) \,d\sigma_s 
+ \epsilon Z[\epsilon]\, S_n(t)  \quad \forall t \in \overline{\Omega_m}
\end{align*}
for all $\epsilon\in]0,\epsilon'[$. 
Then one observes that the map from $]-\epsilon_m,\epsilon_m[$ to $(C^{1,\alpha}(\overline{\Omega_m}))^n$ which takes $\epsilon$ to the function $\epsilon t$ of the variable $t$ is real analytic. Moreover, we have $\epsilon t \in \Omega^o$ for all $\epsilon \in ]-\epsilon_m,\epsilon_m[$ and all $ t \in \overline{\Omega_m}$. Then, by the real analicity of $u^o$ in $\Omega^o$ and known results on composition operators (cf.~Valent \cite[Thm.~5.2, p.~44]{Va88}), one verifies that the map from
\[
\{ h \in (C^{1,\alpha} (\overline{\Omega_m}))^n : h(\overline{\Omega_m}) \subset \Omega^o \}
\]
to $C^{1,\alpha}(\overline{\Omega_m})$ which takes a function $h$ to $u^o(h(\cdot))$ is real analytic. Hence, the map from $]-\epsilon_m,\epsilon_m[$ to $C^{1,\alpha}(\overline{\Omega_m})$ which takes $\epsilon$ to $u^o(\epsilon\cdot) - u^o(0)$ is real analytic and equal to $0$ for $\epsilon=0$. This implies that the map from $]-\epsilon_m,\epsilon_m[ \setminus \{0\}$ to $C^{1,\alpha}(\overline{\Omega_m})$ which takes $\epsilon$ to $\frac{u^o(\epsilon\cdot) - u^o(0)}{\epsilon}$ has a real analytic continuation to $]-\epsilon_m,\epsilon_m[$. Then it is natural to define
\begin{align*}
U^o_m[\epsilon](t) \equiv \frac{u^o(\epsilon\cdot) - u^o(0)}{\epsilon}  +  w^+_{\Omega^o}[\Phi^o[\epsilon] ](\epsilon t) - \int_{\partial\Omega^i} \nu_{\Omega^i}(s) \cdot \nabla S_n(t - s) \Phi^i[\epsilon](s) \,d\sigma_s 
\\+ Z[\epsilon]\, S_n(t)  \quad \forall t \in \overline{\Omega_m}
\end{align*}
for all $\epsilon\in]-\epsilon_m,\epsilon_m[$.

By the real analyticity of $\Phi^o[\cdot]$ (cf.~Theorem \ref{ThmM}) and by the properties of integral operators with real analytic kernels (see Lanza de Cristoforis and Musolino \cite{LaMu13}), it follows that the map from $]-\epsilon_m,\epsilon_m[$ to $C^{1,\alpha}(\overline{\Omega_m})$ which takes $\epsilon$ to $w^+_{\Omega^o}[\Phi^o[\epsilon] ](\epsilon \cdot)$ is real analytic.
Then, one considers the operator from $L^1(\partial \Omega^i)$ to $C^{1,\alpha}(\overline{\Omega_m})$ which takes $f$ to $\int_{\partial\Omega^i} \nu_{\Omega^i}(s) \cdot \nabla S_n(\cdot - s) f(s) \,d\sigma_s$.
By the real analyticity of $S_n$ on $\R^n\setminus\{0\}$, by the fact that the integral does not display singularities (by hypothesis $\Omega_m \subseteq \R^n\setminus\overline{\Omega^i}$), by the real analyticity of the map from $]-\epsilon_m,\epsilon_m[$ to $\COi_0$ which takes $\epsilon$ to $\Phi^i[\epsilon]$ (cf.~Theorem \ref{ThmM}) and since $\COi_0$ is linearly and continuously imbedded in $L^1(\partial \Omega^i)$, we conclude that the map from $]-\epsilon_m,\epsilon_m[$ to $C^{1,\alpha}(\overline{\Omega_m})$ which takes $\epsilon$ to $\int_{\partial\Omega^i} \nu_{\Omega^i}(s) \cdot \nabla S_n(\cdot -  s) \Phi^i[\epsilon](s) \,d\sigma_s $ is real analytic (see Lanza de Cristoforis and Musolino \cite{LaMu13}).
Finally, by the real analyticity of $Z[\cdot]$ (cf.~Theorem \ref{ThmM}), the map from $]-\epsilon_m,\epsilon_m[$ to $C^{1,\alpha}(\overline{\Omega_m})$ which takes $\epsilon$ to $Z[\epsilon] \, S_n$ is real analytic. Hence, one deduces the validity of $(iii)$.
\end{proof}


\section{Appendix}
In this appendix we present a technical result on the integration of real analytic maps in Banach spaces.

We will denote by $X'$ the space of continuous linear functionals from $X$ to $\mathbb{R}$, namely $X'=\mathcal{L}(X,\mathbb{R})$. Moreover, if $N \in \N \setminus\{0\}$, $X_1$, \dots, $X_N$ are Banach spaces, and $i_1$, \dots, $i_N$ are positive natural numbers, then $\mathcal{L}^{i_1,\dots,i_N}(X_1,\dots,X_N;X)$ will denote the space of continuous multilinear maps from  $X_1^{i_1}\times\dots\times X_N^{i_N}$ to $X$ endowed with the norm
\[
\|a\|_{\mathcal{L}^{i_1,\dots,i_N}(X_1,\dots,X_N;X)}=\sup_{Q}\|a[x_{1,1},\dots,x_{1,i_1},\dots,x_{N,1},\dots,x_{N,i_N}]\|_X
\]
where
\begin{align*}
Q=\{&(x_{1,1},\dots,x_{1,i_1},\dots,x_{N,1},\dots,x_{N,i_N})\in X_1^{i_1}\times\dots\times X_N^{i_N}\,:\;
\\ 
&\|x_{1,1}\|_{X_1}\le 1,\dots, \|x_{i,i_1}\|_{X_1}\le 1,
\dots,\|x_{N,1}\|_{X_N}\le 1,\dots,\|x_{N,i_N}\|_{X_N}\le 1\}.
\end{align*}
Finally, to shorten our notation, we set
\begin{equation*}
[x^{(i_1)}_1,\dots, x^{(j_N)}_N] = [\underbrace{x_1,\dots,x_1}_{i_1-\text{times}},\dots,\underbrace{x_N,\dots,x_N}_{i_N-\text{times}}]\,.
\end{equation*}

We now find convenient to recall the definition of real analytic maps from a Banach space $X$ to a Banach space $Y$ (see, for example, Deimling \cite{De85}) and the definition of Pettis integral in the case of maps from a bounded interval of $\R$ to a Banach space $X$ (see, for example, Pettis \cite{Pe38}).

\begin{defin}\label{anal}
	Let $X,Y$ be real Banach spaces. Let $U$ be an open subset of $X$.
	We say that a function $f$ from $U$ to $Y$ is real analytic if for every $x \in U$ there are $\rho,M \in ]0,+\infty[$ and multilinear maps $a_j(x) \in \mathcal{L}^j(X,Y)$, with $j \in \N$, such that
	\begin{equation*}
	\|a_j(x)\|_{\mathcal{L}^j(X,Y)} \leq M \left( \frac{1}{\rho}\right)^j \qquad \forall j \in \N
	\end{equation*}
	and
	\begin{equation*}
	f(y) = \sum_{j =0}^{+ \infty} a_j(x)[(y-x)^j] \qquad \forall y \in B_X(x,\rho).
	\end{equation*}
\end{defin}

\begin{defin}\label{Pettisint}
	Let $X$ be a Banach space and $a,b \in \R$. A function $F$ from $]a,b[$ to $X$ is said to be Pettis integrable over $]a,b[$ if there exists an element $x \in X$ such that
	\begin{equation*}
	L[x] = \int_{a}^{b} L[F(\tau)] \,d\tau \qquad \forall L \in X',
	\end{equation*}
	where the integral on the right hand side is the standard Lebesgue integral on $\R$.
	Then we define
	\begin{equation*}
	 \int_{a}^{b} F(\tau) \,d\tau \equiv x.
	\end{equation*}
\end{defin}

Our aim in this appendix is to prove the following theorem.

\begin{teo}\label{Sf(w)A(tw)}
	Let $X, Y$ be Banach spaces. Let $U$ be an open star-shaped subset of $X$ and let $A$ be a real analytic map from $U$ to $Y$. Let $f \in L^1([0,1])$. Then, for all $w\in U$ the integral 
	\begin{equation}\label{Sf(w)A(tw).eq1}
	\int_{0}^{1} f(\tau) A(\tau w) \,d\tau
	\end{equation} 
	 exists in the sense of Pettis and the map from $U$ to $Y$ which takes $w$ to \eqref{Sf(w)A(tw).eq1}  is real analytic.
\end{teo}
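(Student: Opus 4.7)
The plan is to combine the pointwise real analyticity of $A$ with a compactness argument on the segment from $0$ to $w_0$, and then to exchange the Bochner integral with a uniformly convergent Taylor series. (The statement is to be read under the implicit assumption that $0 \in U$ and $U$ is star-shaped with respect to $0$, so that $\tau w \in U$ for all $\tau \in [0,1]$ and $w \in U$.) For the existence of the Pettis integral I would simply note that $\tau \mapsto A(\tau w)$ is continuous from $[0,1]$ to $Y$ (since $A$ is real analytic, hence continuous), so $\tau \mapsto f(\tau) A(\tau w)$ is the product of a scalar $L^1$ function with a bounded Bochner-measurable $Y$-valued function and is therefore Bochner integrable, a fortiori Pettis integrable.

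For the analyticity, I would fix $w_0 \in U$ and consider the compact segment $K \equiv \{\tau w_0 : \tau \in [0,1]\} \subset U$. The first step is to extract from Definition \ref{anal} uniform constants $r, M > 0$ such that, for every $z \in K$, one has $B_X(z,r) \subset U$, $A(y) = \sum_{j=0}^{\infty} a_j(z)[(y-z)^j]$ for all $y \in B_X(z,r)$, and $\|a_j(z)\|_{\mathcal{L}^j(X,Y)} \le M r^{-j}$. I would obtain these by covering $K$ with finitely many balls of convergence and then using standard Cauchy-type estimates to control the Taylor coefficients at nearby centres. Moreover, by uniqueness of Taylor coefficients one has $a_j(z) = \frac{1}{j!} d^j A(z)$, so $z \mapsto a_j(z)$ is continuous from $U$ to $\mathcal{L}^j(X,Y)$, and in particular $\tau \mapsto \tau^j a_j(\tau w_0)$ is continuous from $[0,1]$ to $\mathcal{L}^j(X,Y)$.

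Then, for $w \in X$ with $\|w - w_0\|_X < r$ and $\tau \in [0,1]$, one has $\|\tau w - \tau w_0\|_X \le \|w - w_0\|_X < r$, so
\[
A(\tau w) = \sum_{j=0}^\infty \tau^j a_j(\tau w_0)[(w - w_0)^j],
\]
with partial sums dominated in $Y$ by the convergent geometric series $M \sum_j (\|w - w_0\|_X / r)^j$, uniformly in $\tau$. Multiplying by $f$ and applying dominated convergence to the Bochner integral would allow me to interchange sum and integral, yielding
\[
\int_0^1 f(\tau) A(\tau w)\, d\tau = \sum_{j=0}^\infty b_j[(w - w_0)^j],
\]
where $b_j \equiv \int_0^1 f(\tau) \tau^j a_j(\tau w_0)\, d\tau \in \mathcal{L}^j(X,Y)$ satisfies $\|b_j\|_{\mathcal{L}^j(X,Y)} \le M r^{-j} \|f\|_{L^1([0,1])}$. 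By Definition \ref{anal} this establishes real analyticity at $w_0$. The main obstacle is precisely extracting the uniform constants $r$ and $M$ on the whole of the compact segment $K$ from the merely pointwise data provided by Definition \ref{anal}; this requires a careful finite sub-cover combined with Cauchy-type estimates comparing the Taylor coefficients at nearby base points. Once those uniform bounds are in hand, the rest reduces to routine dominated convergence and multilinear estimates.
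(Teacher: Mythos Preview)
Your proposal is correct and follows a genuinely different route from the paper's proof. The paper treats $(\tau,w)\mapsto A(\tau w)$ as a real analytic function of \emph{two} variables on $I(w^\ast)\times U(w^\ast)$, expands it in a joint power series at finitely many base points $(\tau'_l,w^\ast)$, and partitions $[0,1]$ into intervals $I_1,\dots,I_h$ so that on each $I_l$ the coefficients $b_{ij}(\tau'_l,w^\ast)$ are $\tau$-independent; the integrals $\int_{I_l} f(\tau)(\tau-\tau'_l)^i\,d\tau$ are then purely scalar and the Pettis integral is verified directly by testing against $L\in Y'$. You instead expand $A(\tau w)$ as a one-variable power series in $w-w_0$ with a \emph{moving} centre $\tau w_0$, so your coefficients $a_j(\tau w_0)$ are $\tau$-dependent and must be integrated as Bochner integrals in $\mathcal{L}^j(X,Y)$.

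Each approach has its price. The paper's two-variable expansion sidesteps the uniform Cauchy-type estimates you flag as ``the main obstacle'': once $(\tau,w)\mapsto A(\tau w)$ is real analytic, Definition~\ref{anal} furnishes the bounds directly at each $(\tau'_l,w^\ast)$, and compactness of $[0,1]$ alone suffices to assemble them, with no need to recentre Taylor series. The cost is a somewhat heavier bookkeeping (double index $(i,j)$, partition of $[0,1]$, and an explicit Pettis-duality argument). Your approach yields the cleaner final expansion $\sum_j b_j[(w-w_0)^j]$ with a single index, but you must supply the uniform bounds on $\|a_j(\tau w_0)\|$ along the whole segment; your plan for this via finite subcover plus recentering estimates is correct, though it is the one step that would need to be written out carefully (the recentering bound $\|a_n(z)\|\le 2M_k(\rho_k/2)^{-n}$ for $z\in B(z_k,\rho_k/2)$ follows from the binomial re-expansion of the series at $z_k$). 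Your use of Bochner integrability is also a mild strengthening over the paper's purely Pettis framework, but is justified since $\tau\mapsto A(\tau w)$ is continuous.
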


\begin{proof}
Since real analyticity is a local property, it suffices to prove the statement in a neighborhood of a fixed point $w^\ast$ of $U$. In the first part of the proof we introduce a suitable neighborhood.

{\bf Step 1.} We begin by observing that, being $U$ open and  star-shaped,  for every $\bar{\tau} \in [0,1]$ there exist $\delta_{\bar{\tau}} \in ]0,+\infty[$ and an open neighborhood $U_{\bar{\tau}}(w^\ast)\subset U$ of $w^\ast$ such that
	\begin{equation}\label{Sf(w)A(tw).eq2}
	\tau w \in U \qquad \forall (\tau,w) \in ]\bar{\tau} - \delta_{\bar{\tau}},\bar{\tau} 
	+ \delta_{\bar{\tau}} [ \times U_{\bar{\tau}}(w^\ast).
	\end{equation}
	Then, by the compactness of $[0,1]$, there exist $\tau_1,\dots,\tau_k \in [0,1]$ such that
	\begin{equation*}
	[0,1] \subset \bigcup_{j=1}^{k} ]\tau_j - \delta_{\tau_j},\tau_j 
	+ \delta_{\tau_j} [
	\end{equation*}
	with $\delta_{\tau_j}$ as in \eqref{Sf(w)A(tw).eq2} for all $j \in \{1,\dots,k\}$.	
	If we now define
	\begin{equation*}
	U(w^\ast) \equiv \bigcap_{j=1}^k U_{\tau_j}(w^\ast) \quad \mbox{and} \quad I(w^\ast) \equiv \bigcup_{j=1}^{k} ]\tau_j - \delta_{\tau_j},\tau_j 
	+ \delta_{\tau_j} [\,,
	\end{equation*}
	then we have that $U(w^\ast)$ and $I(w^\ast)$ are open, that
	\begin{equation}\label{[0,1]inI}
	[0,1]\subset I(w^*)\,,
	\end{equation}
	and that
	\[
	\tau w \in U\qquad\forall (\tau,w) \in I(w^\ast)\times U(w^\ast)\,.
	\]
	As a consequence,  the map from $I(w^\ast) \times U(w^\ast)$ to $U$ which takes $(\tau,w)$ to $\tau w$ is well defined and, being bilinear and continuous,  it is also real analytic.  It follows that the map from $I(w^\ast) \times U(w^\ast)$ to $Y$ which takes $(\tau,w)$ to $A(\tau w)$ is  real analytic, being the composition of real analytic maps. By Definition \ref{anal} of real analytic maps we deduce that, for all fixed $\tau' \in I(w^\ast)$, there exist positive real numbers $M(\tau',w^\ast)$ and $\rho(\tau',w^\ast)$ and a family of multilinear maps $\{a_{ij}(\tau',w^\ast)\}_{i,j \in \N} \subset \mathcal{L}^{i,j}(\R,X;Y)$  such that
	\begin{equation*}
	\|a_{ij}(\tau',w^\ast)\|_{\mathcal{L}^{i,j}(\R,X;Y)} \leq M(\tau',w^\ast) \left(\frac{1}{\rho(\tau',w^\ast)}\right)^{i+j} \qquad \forall i,j \in \N
	\end{equation*}
	and such that
	\begin{equation*}
	A(\tau w) = \sum_{i,j=0}^{\infty} a_{ij}(\tau',w^\ast) [(\tau-\tau')^{(i)},(w-w^\ast)^{(j)}]
	\end{equation*}
	for all $(\tau,w) \in ]\tau'- \rho(\tau',w^\ast),\tau'+ \rho(\tau',w^\ast)[\, \times B_X(w^\ast, \rho(\tau',w^\ast))$. Moreover, since the first $i$ arguments of  the $a_{i,j}(\tau',w^\ast)$'s are real, one verifies that there are multilinear maps $b_{i,j}(\tau',w^\ast)\in \mathcal{L}^j(X;Y)$ such that
	\[
	a_{i,j}(\tau',w^\ast)[(\tau-\tau')^{(i)},(w-w^\ast)^{(j)}]=(\tau-\tau')^i \, b_{i,j}(\tau',w^\ast)[(w-w^\ast)^{(j)}]\qquad\forall i,j \in \N\,.
	\]
Then we have 
	\begin{equation}\label{bij(tau,w)inequality}
	\|b_{ij}(\tau',w^\ast)\|_{\mathcal{L}^j(X;Y)} \leq M(\tau',w^\ast) \left(\frac{1}{\rho(\tau',w^\ast)}\right)^{i+j} \qquad \forall i,j \in \N
	\end{equation}
	and
	\begin{equation}\label{A(tau w)}
	A(\tau w) = \sum_{i,j=0}^{\infty} (\tau-\tau')^i \, b_{ij}(\tau',w^\ast) [(w-w^\ast)^{(j)}]
	\end{equation}
	where the series converges absolutely and uniformly for $(\tau,w)$ in $]\tau'- \rho(\tau',w^\ast),\tau'+ \rho(\tau',w^\ast)[ \, \times B_X(w^\ast, \rho(\tau',w^\ast))$. 
	We now observe that the set $\{B(\tau', {\rho(\tau',w^\ast)}/{2} )\,:\;\tau' \in I(w^\ast)\}$ is an open covering of $[0,1]$ (cf.~\eqref{[0,1]inI}). Then, by a standard compactness argument it follows that there exist $\tau'_1,\dots,\tau'_h \in [0,1]$ and disjoint  intervals  $I_1,\dots,I_h\subset [0,1]$ such that $I_1\cup \dots \cup I_h=[0,1]$ and 
	\begin{equation}\label{I_lconteinedB(tau,rho/2)}
	I_l \subset B\left(\tau'_l, \frac{\rho(\tau'_l,w^\ast)}{2} \right) \qquad \forall l \in \{1,\dots,h\}
	\end{equation}
	(some of the $I_l$'s might be empty). 	Finally, we define 
	\begin{equation}\label{rho(w*)}
	\rho(w^\ast) \equiv \min_{l \in 1,\dots,h} \rho(\tau'_l,w^\ast)\,.
	\end{equation}
	In the next step of the proof we show that the statement of the theorem holds in $B\left(w^\ast,\frac{\rho(w^\ast)}{2}\right)$.
	To do so, we also find convenient to set
	\begin{equation}\label{M(w*)}
	M(w^\ast) \equiv \max_{l=1,\dots,h} M(\tau'_l,w^\ast).
	\end{equation}

{\bf Step 2.}	We claim that for all $w \in B\left(w^\ast,\frac{\rho(w^\ast)}{2}\right)$   the Pettis integral
\[
\int_{0}^{1} f(\tau) A(\tau w) \,d\tau
\]
is given by the sum
\begin{equation}\label{sum}
 \sum_{l=1}^{h}\sum_{i,j=0}^{\infty}  \left(\int_{I_l}  f(\tau)(\tau-\tau'_l)^i \,d\tau \right)  b_{ij}(\tau'_l,w^\ast) [(w-w^\ast)^{(j)}]\,.
\end{equation}
To prove it,  we first verify that \eqref{sum} defines an element of $Y$. Indeed, if  $w \in B\left(w^\ast,\frac{\rho(w^\ast)}{2}\right)$, then \eqref{bij(tau,w)inequality}, \eqref{rho(w*)}, and \eqref{M(w*)} imply that
\begin{equation}\label{|bij[w]|}
    \begin{aligned}
    &\left\|b_{ij}(\tau'_l,w^\ast) [(w-w^\ast)^{(j)}] \right\|_Y \leq M(\tau'_l,w^\ast) \left( \frac{1}{\rho(\tau'_l,w^\ast)} \right)^{i+j} \left( \frac{\rho(w^\ast)}{2}\right)^j
    \\
    &\quad\leq   \left( \frac{1}{2}\right)^{j}\, M(\tau'_l,w^\ast) \left( \frac{1}{\rho(\tau'_l,w^\ast)} \right)^{i}\left( \frac{\rho(w^\ast)}{\rho(\tau'_l,w^\ast)} \right)^{j} \leq\left( \frac{1}{2}\right)^{j}\, M(w^\ast) \left( \frac{1}{\rho(\tau'_l,w^\ast)} \right)^{i}    
    \end{aligned}
\end{equation}
for all $i,j \in \N$. Hence, by \eqref{I_lconteinedB(tau,rho/2)} and \eqref{|bij[w]|} we have
\[
\begin{split}
&\left\| \left(\int_{I_l}  f(\tau)(\tau-\tau'_l)^i \,d\tau \right)  b_{ij}(\tau'_l,w^\ast) [(w-w^\ast)^{(j)}] \right\|_Y
\\
&\qquad\le \| f \|_{L^1([0,1])} \left( \frac{\rho(\tau'_l,w^\ast)}{2} \right)^i\left\|  b_{ij}(\tau'_l,w^\ast) [(w-w^\ast)^{(j)}] \right\|_Y\\
&\qquad\le \left( \frac{1}{2}\right)^{i+j} \| f \|_{L^1([0,1])}  M(w^\ast).
\end{split}
\]
The last inequality readily implies the convergence in $Y$ of the series in \eqref{sum}.
	
In view of Definition \ref{Pettisint} of  Pettis integral, we now
consider a functional $L \in Y'$ and we observe that for all fixed $w \in B\left(w^\ast,\frac{\rho(w^\ast)}{2}\right)$ the function which takes $\tau\in ]0,1[$ to $L[A(\tau w)]$ is continuous. Since $f\in L^1([0,1])$, it follows that  the function which takes $\tau\in]0,1[$ to
	\[
	L[f(\tau)A(\tau w)]=f(\tau)L[A(\tau w)]
	\]
         belongs to $L^1([0,1])$.  Then, by splitting the integral on $\tau\in]0,1[$ over the partition $I_1$,\dots,$I_h$, by the uniform convergence of the series in \eqref{A(tau w)}, and by \eqref{I_lconteinedB(tau,rho/2)} we obtain that
	\begin{equation}\label{integral01<L,f(w)A(tau w)>}
	\begin{aligned}
	\int_{0}^{1} L[f(\tau)A(\tau w)] \,d\tau &= \sum_{l=1}^{h} \int_{I_l} L[f(\tau)A(\tau w)] \,d\tau
	\\
	&=\sum_{l=1}^{h} \int_{I_l} L\biggl[f(\tau)  \sum_{i,j=0}^{\infty} (\tau-\tau'_l)^i \, b_{ij}(\tau'_l,w^\ast) [(w-w^\ast)^{(j)}] \biggr] \,d\tau
	\\
	&=\sum_{l=1}^{h} \int_{I_l} \sum_{i,j=0}^{\infty} L\left[f(\tau)(\tau-\tau'_l)^i \, b_{ij}(\tau'_l,w^\ast) [(w-w^\ast)^{(j)}]\right] \,d\tau 
	\\
	&=\sum_{l=1}^{h} \int_{I_l}  \sum_{i,j=0}^{\infty}f(\tau) (\tau-\tau'_l)^i L\left[ b_{ij}(\tau'_l,w^\ast) [(w-w^\ast)^{(j)}] \right] \,d\tau\,. 
	\end{aligned}
	\end{equation}
	To verify that the Pettis integral of $f(\tau)A(\tau w)$ on $[0,1]$ is given by \eqref{sum},  it  remains  to show that we can  change the order of the integration  over $I_l$  and of the summation on $i,j$ in \eqref{integral01<L,f(w)A(tau w)>}. By a classical corollary of the dominated convergent theorem it suffices to prove that
	\[
	\sum_{i,j=0}^{\infty}\int_{I_l} \left| f(\tau) (\tau-\tau'_l)^i L\left[ b_{ij}(\tau'_l,w^\ast) [(w-w^\ast)^{(j)}] \right]\right| \,d\tau
	\]
	is a convergent series. This latter fact can be deduced by noting that, as a consequence of \eqref{|bij[w]|}, we have
\begin{equation*}
    \begin{aligned}
    &\left|(\tau-\tau'_l)^i L\left[ b_{ij}(\tau'_l,w^\ast) [(w-w^\ast)^{(j)}] \right] \right| 
    \\
    &\quad\leq  \left( \frac{\rho(\tau'_l,w^\ast)}{2} \right)^i \|L\|_{Y'} \, \left( \frac{1}{2}\right)^{j}\, M(w^\ast) \left( \frac{1}{\rho(\tau'_l,w^\ast)} \right)^{i}\leq \left( \frac{1}{2}\right)^{i+j} \|L\|_{Y'} \, M(w^\ast) 
    \end{aligned}
    \end{equation*} 
    for all $i,j \in\N$, $l \in \{1,\dots,h\}$,  and $\tau\in I_l$. 
    
Now that we know that the  integral $\int_{0}^{1} f(\tau) A(\tau w) \,d\tau$ is given by \eqref{sum}, the real analyticity of the map that takes $w$ to $\int_{0}^{1} f(\tau) A(\tau w) \,d\tau$ is a direct consequence of Definition \ref{anal} of real analytic maps. 
\end{proof}

\section*{Acknowledgments}

The research of the author was supported by HORIZON 2020 RISE project "MATRIXASSAY" under project number 644175, during the author's secondment at the University of Texas at Dallas. The author gratefully acknowledges the University of Texas at Dallas and the University of Tulsa for the great research environment and the friendly atmosphere provided. The author is indebted to Matteo Dalla Riva, Gennady Mishuris and Paolo Musolino for useful discussions and comments on the problem studied in this paper.


\vspace{\baselineskip}

{\sc Riccardo Molinarolo\\
	Department of Mathematics, IMPACS,
	Aberystwyth University, Aberystwyth,
	Ceredigion SY23 3BZ, UK }

{\it E-mail address:} \verb+rim22@aber.ac.uk+

\end{document}